\theoremstyle{plain}
\newtheorem{theorem}{Theorem}[section]
\newtheorem{proposition}[theorem]{Proposition}
\theoremstyle{definition}
\newtheorem{definition}[theorem]{Definition}
\theoremstyle{remark}
\newtheorem{example}[theorem]{Example}
\newtheorem{remark}[theorem]{Remark}
\newtheorem{notation}[theorem]{Notation}
\numberwithin{equation}{section}
\DeclareMathOperator{\Lip}{Lip}
\DeclareMathOperator{\dist}{dist}
\DeclareMathOperator{\expected}{\mathbb{E}}
\DeclareMathOperator{\prob}{Prob}
\DeclareMathOperator{\spt}{spt}
\begin{document}

\title[Fractals with Partial Self Similarity]{$ V $-Variable Fractals: \\ Fractals with Partial Self Similarity}  
\subjclass{28A80 (primary), 37H99, 60G57, 60J05 (secondary)}

\author{Michael  Barnsley}
\address[Michael  Barnsley  and John~E. Hutchinson]
{Department of Mathematics\\
Mathematical Sciences Institute\\
Australian National University\\
Canberra, ACT, 0200\\
AUSTRALIA}
\email[Michael  Barnsley]{Michael.Barnsley@maths.anu.edu.au}
\author{John~E. Hutchinson}
\email[John~E. Hutchinson]{John.Hutchinson@anu.edu.au} 
\author{ \"{O}rjan Stenflo}
\address[\"{O}rjan Stenflo]
{Department of Mathematics\\
Uppsala University\\
751 05 Uppsala\\
SWEDEN}
\email{stenflo@math.uu.se}

\begin{abstract}  We establish properties of a new type of fractal which has partial self similarity at all scales.  For any collection  of iterated functions systems with an associated probability distribution and any positive integer $ V $ there is a corresponding class of $ V $-variable fractal sets or measures with a natural probability distribution. These $ V $-variable fractals can be obtained from the points on the attractor of a single deterministic iterated function system. Existence, uniqueness and approximation results are established under average contractive assumptions.  We also obtain extensions of some  basic results  concerning iterated function systems.
\end{abstract}

\maketitle

\section{Introduction}  \label{secint}

A $ V $-variable fractal is loosely characterised by the fact that it possesses at most $ V $ distinct local patterns at each level of magnification,  where the class of patterns depends on the  level, see Remark~\ref{rm32}.   Such fractals are  useful for modelling purposes and for  geometric applications which require  random fractals with  a controlled degree of strict self similarity at each scale, see  \cite{Bar}*{Chapter 5}.

Standard fractal sets or measures determined by a single iterated function system [IFS] $ F $ acting on a metric space $ X $ such as $ \mathbb{R}^k $, can be generated directly by a deterministic process,      or alternatively by a Markov chain or ``chaos game'', acting on   $ X $.  Now let $ \boldsymbol{F} $ be a family of IFSs acting on $ X $ together with an associated probability distribution   on   $ \boldsymbol{F} $.  Let $ V $ be any positive integer.
The corresponding class of $ V $-variable fractal sets or measures  from $X $, and its associated probability distribution, can be generated by a Markov chain or ``chaos game''  operating \emph{not} on the state space $ X $   but on the state space $ \mathcal{C} (X )^V $ or $  \mathcal{M} (X )^V $ of $ V $-tuples of compact subsets of $ X $ or probability  measures over $X $, respectively.  See Theorems~\ref{th41} and~\ref{th42}, and see Section~\ref{2vf} for a simple example.  The Markov chain converges exponentially, and approximations to its steady state attractor can readily be obtained.  The projection of the attractor in any of the $ V $ coordinate directions  gives the class of $ V $-variable fractal  sets or measures corresponding to $ \boldsymbol{F} $  together  with its natural probability distribution in each case.
The full attractor contains further information about the correlation  structure  of subclasses of these $ V $-variable fractals.

\begin{figure}[h]
\scalebox{.6}{\includegraphics{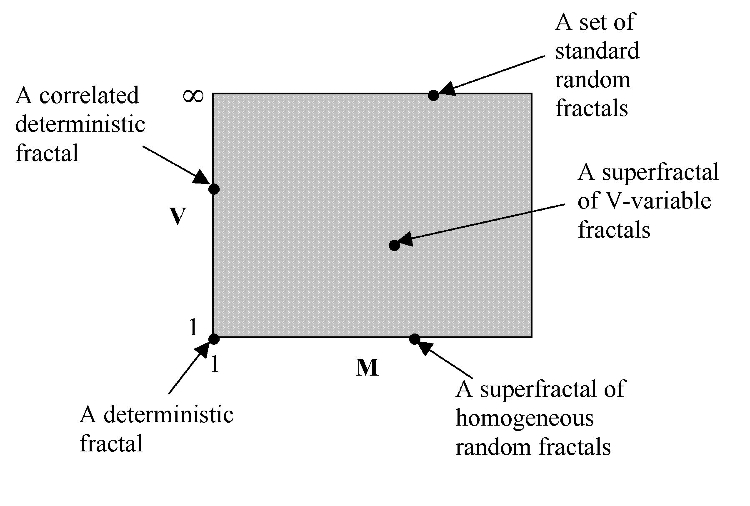}}
\caption{Sets of $ V $-variable fractals for different $ V $ and $ M $. See Remark~\ref{hft}.} 
\label{Table}
\end{figure}

The case $ V=1 $ corresponds to homogeneous random fractals. The limit $ V\to \infty $ gives standard random fractals, and  for this reason the Markov chain for large $ V $ provides a fast way of generating classes of standard random fractals together with their probability distributions.  Ordinary fractals generated by a single IFS can be seen as special cases of the present construction and this provides new insight into the  structure of such fractals, see Remark~\ref{hft}.  
For the connection  with other classes of fractals in the literature see Remarks~\ref{compare} and~\ref{gdf}. 

\medskip  We summarise  the main notation and results.  

Let $ (X,d ) $ be a complete separable metric space.  Typically  this  will be  Euclidean space $  \mathbb{R}^k  $ with the standard metric.  For each $ \lambda $ in some index set $ \Lambda $  let    
$  F^\lambda $ be an IFS  acting on  $ (X,d ) $, i.e.
\begin{equation} \label{dfF1}
F^\lambda= (f^\lambda_1,\dots,f^\lambda_M,w^\lambda_1,\dots,w^\lambda_M ), 
\quad f_m^\lambda : X \to X, 
\quad  0\leq w_m^\lambda \leq 1, \quad  \sum_{m=1}^M w^\lambda _m = 1 . 
\end{equation} 
We will require both the  cases  where $ \Lambda $ is  finite and   where $ \Lambda $ is  infinite.  In order to simplify the exposition we assume that there is only a finite number  $ M $   of functions in each $ F^ \lambda $ and that $ M $  does not depend on~$ \lambda $.  
Let $ P $ be a probability distribution  on some $ \sigma $-algebra of subsets of $ \Lambda $. The given data is then denoted by 
 \begin{equation} \label{dfF2}
\boldsymbol{F} = \{(X,d), F^ \lambda , \lambda \in \Lambda, P \}.
\end{equation} 

Let $ V $ be a fixed positive integer.

\medskip
 Suppose first that the family $\Lambda $ is finite and the functions $ f^\lambda_m $ are uniformly contractive.  The  set $ \mathcal{K}_V$ of \emph{$ V $-variable} fractal subsets of $ X $ and the set $ \mathcal{M}_V$ of \emph{$ V $-variable} fractal measures  on $ X $ associated to $ \boldsymbol{F} $ is then given by Definition~\ref{Vvar}.    
There are   Markov chains   acting  on the set $ \mathcal{C}(X)^V $  of $ V $-tuples of compact subsets of $  X $  and on the set $ \mathcal{M}_c(X)^V $  of $ V $-tuples of compactly supported unit mass measures on $ X $,  whose stationary distributions  project  in any of the $ V $ coordinate directions  to probability distributions  $ \mathfrak{K}_V $ on  $ \mathcal{K}_V $ and  $ \mathfrak{M}_V $   on  $ \mathcal{M}_V $, respectively.  
Moreover, these Markov chains are each given by a  single deterministic  IFS  $ \mathfrak{F}_V^{\mathcal{C} }$     or  $ \mathfrak{F}_V^{\mathcal{M}_c }$  constructed from $ \boldsymbol{F} $ and acting on  $ \mathcal{C}(X)^V $  or $ \mathcal{M}_c(X)^V $  respectively.  
The IFS's $ \mathfrak{F}_V^{\mathcal{C} }$     and  $ \mathfrak{F}_V^{\mathcal{M}_c }$    are called   \emph{superIFS}'s.  
The  sets $ \mathcal{K}_V $ and $ \mathcal{M}_V $, and the probability distributions   $ \mathfrak{K}_V $  and  $ \mathfrak{M}_V $, are called \emph{superfractals}.    
See Theorem~\ref{th41}; some of these results were first obtained in~\cite{BHS05}. The distributions $ \mathfrak{K}_V $   and  $ \mathfrak{M}_V $ have a complicated correlation structure and differ markedly  from other notions of random fractal in the literature.  See Remarks~\ref{compare} and~\ref{hcf}.

In many situations one needs an infinite family $ \Lambda $ or  needs average contractive conditions, see Example~\ref{rm54}. 
In this case one works with the set $ \mathcal{M}_1(X)^V $  of $ V $-tuples of finite first moment  unit mass measures on $ X $. The corresponding  superIFS  $ \mathfrak{F}_V^{\mathcal{M}_1 }$ is pointwise average contractive by Theorem~\ref{th42} and one obtains the existence of   a corresponding superfractal distribution 
$ \mathfrak{M}_V $. There are  technical difficulties in establishing these results, see Remarks~\ref{rmloc}, \ref{nosep}  and~\ref{npc}.  

\medskip 

In Section~\ref{xd} the properties of the   the Monge-Kantorovitch and the strong Prokhorov probability metrics  are summarised.   The strong Prokhorov metric is not widely known in the fractal literature although it is the natural metric to use with uniformly contractive conditions. We include the mass transportation, or equivalently the probabilistic,  versions of  these metrics, as  we need them  in Theorems~\ref{th1}, \ref{th41}, \ref{th42} and~\ref{rc}.   
We  work with probability metrics on spaces of measures and such metrics are not always separable.  So in Section~\ref{xd} the extensions required to include non separable spaces are noted. 
  
In  Section~\ref{secdf}, particularly Theorem~\ref{th1} and the following Remarks,  
we summarise  and in some cases extend  basic results in the literature concerning IFS's, link the measure theoretic and probabilistic approaches and sketch the proofs.  In  particular, IFS's with a possibly infinite family of functions and pointwise average contractive conditions are considered. 
The law of large numbers for the corresponding Markov process starting from an arbitrary point, also known as the convergence theorem for the ``chaos game'' algorithm, is extended to the case when the IFS acts on a non
locally compact state space.
This situation typically occurs when the state space is a function space or space of measures, and   here it is required  for the superIFS $ \mathfrak{F}_V^{\mathcal{M}_1 }$ in Theorem~\ref{th42}.  The strong Prokhorov metric is used 
in the case of uniform contractions.    We hope Theorem~\ref{th1}  will be of independent use. 

In Section~\ref{secifsrf}  we summarise some of the basic properties  of  standard random fractals generated by a family of IFS's.

In Section~\ref{secvvtc}   the representation of $ V $-variable fractals  in  the space $ \Omega_V $ of tree codes and in the space $ \mathcal{A}_V^\infty $ of addresses  is developed, and  the connection between the two spaces is discussed.  The space $ \Omega_V $ is of the type  used for realisations of general random fractals  and consists of trees with each node  labelled in effect by an IFS, see the comment following Definition~\ref  {dftrees} and see Definition~\ref{Vvar}.  The space $ \mathcal{A}_V^\infty $   is of the type used to address points on a single deterministic fractal, and here consists of infinite sequences of $ V \times (M+1) $ matrices  each of which  defines a map from the set of $ V $-tuples of sets or measures to itself, see Definition~\ref{dfseqcode}.  Also see Figures~\ref{fig:example} and~\ref{fig:example2}.   

In Section~\ref{seccgvvf} the existence, uniqueness and convergence results for $ V $-variable fractals and superfractals are proved, some examples are given, and the connection with graph directed IFSs is discussed. In Section~\ref{secvti} we establish the rate at which the probability distributions $ \mathfrak{K}_V $ and $ \mathfrak{M}_V $ converge to the corresponding distributions on standard random fractal sets and measures respectively as $ V \to \infty $.
In Section~\ref{2vf}   a simple example of a super IFS and the  associated Markov chain is given. 
 
 In Section~\ref{secext}  we make some concluding remarks including the relationship with other types of fractals,  extensions of the results, and some motivation for the method of construction of $ V $-variable fractals. 
 
  \medskip The reader may find it easier to begin with Section~\ref{secifsrf} and refer back to Sections~\ref{secbac}  and~\ref{secdf} as needed, particularly in the proofs of Theorems~\ref{th41} and~\ref{th42}. 
  An index of notation is provided at the end of the paper.

\medskip This work was partially supported by the Australian Research Council and carried out at the Australian National University.


\section{Preliminaries}\label{secbac}\label{xd} 

Throughout the paper $ (X , d)$ denotes a complete separable metric space, except where mentioned otherwise.  

\begin{definition}\label{cx}
 The collection of  nonempty compact subsets of $ X$ with the     Hausdorff metric is denoted by $ (\mathcal{C} (X), d_{\mathcal{H}}) $. The collection of  nonempty bounded closed subsets of $ X$ with the      Hausdorff metric is denoted by  $ (\mathcal{B} \mathcal{C} (X), d_{\mathcal{H}}) $. 

For $  A \subset X $ let
$ 
A^\epsilon =  \{x: d(x  ,A) \leq \epsilon \} 
$ 
 be the \emph{closed  $ \epsilon $-neighbourhood} of $ A  $.
\end{definition} 

Both spaces $ (\mathcal{C} (X), d_{\mathcal{H}}) $ and $ (\mathcal{B} \mathcal{C} (X), d_{\mathcal{H}}) $  are complete and separable if $ (X, d) $ is complete and separable. 
Both spaces are complete  if $ (X, d) $ is just assumed to be complete. 

\begin{definition}[\emph{Prokhorov metric}]\label{mx}
 The collection of  unit mass Borel (i.e.\  probability) measures on the Borel subsets of $ X $ with the topology of weak convergence is denoted by $ \mathcal{M}(X) $. Weak convergence of $ \nu_n \to \nu $ means   $ \int  \phi \, d\nu_n \to \int  \phi \, d\nu $ for all bounded continuous  $ \phi $.  The (standard) \emph{Prokhorov metric} $ \rho $ on  $ \mathcal{M}(X) $ is defined by
\[
\rho(\mu,\nu) := \inf \left\{ \epsilon > 0 : \mu(A)\leq \nu(A^\epsilon) + \epsilon \text{ for all Borel sets $ A \subset X $ }\right\} .
\]
\end{definition}

The Prokhorov metric  $ \rho $  is complete and separable and induces the topology of weak convergence.   Moreover,   $ \rho $  is complete if $ (X, d) $ is just assumed to be complete.  See \cite{Bil}*{pp 72,73}.  We will not use the Prokhorov metric but mention it for comparison with the strong Prokhorov metric in Definition~\ref{mp}.

The   \emph{Dirac measure} $ \delta_a $ concentrated at  $  a $ is defined by $ \delta_a (E) = 1$
if $ a\in E $ and otherwise   $ \delta_a (E) = 0$.
 
\medskip If $ f:X \to X $ or $ f:X \to \mathbb{R} $,  the \emph{Lipschitz constant} for $ f$  is denoted by $ \Lip  f  $  and   is  defined to be the least   $ L $ such that $ d(f(x), f(y) ) \leq L d(x,y)$   for all $ x,y\in X $.

\begin{definition}[\emph{Monge-Kantorovitch  metric}]\label{mx2} The collection of those  $ \mu \in \mathcal{M}(X) $  with finite first moment,  i.e.\ those $ \mu $ such that 
 \begin{equation} \label{dfmk}
  \int d(a,x) \, d\mu (x) <\infty
 \end{equation} 
  for some and hence any $ a\in X $, is denoted by $  \mathcal{M}_1(X)$.
The  \emph{Monge-Kantorovitch  metric}  $ d_{MK} $ on $  \mathcal{M}_1(X)$
is defined in any of the three following equivalent ways:
 \begin{equation} \label{mk}
\begin{aligned}
&d_{MK}(\mu,\mu')   :=  \sup_f \bigg\{ \int f\, d\mu - \int f \, d\mu' : \text{Lip\,} f\leq 1\bigg\}   \\ 
 &\qquad  =  \inf_\gamma \bigg\{ \int d(x,y)\, d\gamma(x,y) : \gamma \text{ a Borel measure on } X\times X,\, \pi_1(\gamma)=\mu, \, \pi_2(\gamma) = \mu' \bigg\}  \\
& \qquad  =  \inf \bigg\{ \expected d(W,W') : \dist W = \mu,\, \dist W' = \mu' \bigg\}.
  \end{aligned} 
  \end{equation} 
The maps $  \pi_1, \pi_2:X\times X \to X $ are the projections onto the first and second coordinates, and so $ \mu $ and $ \mu'$ are the marginals  for $ \gamma $.    
 In the third version the infimum is taken over $ X $-valued random variables $ W  $ and $ W'$ with distribution $ \mu  $ and $ \mu' $ respectively but otherwise unspecified joint distribution.
  \end{definition}

Here and elsewhere, ``dist'' denotes the probability distribution  on the associated random variable.

The metric space $ ( \mathcal{M}_1(X), d_{MK} ) $ is complete and separable.   
The moment restriction \eqref{dfmk}  is automatically satisfied if  $ (X, d) $ is bounded.  The second equality in \eqref{mk} requires proof, see  \cite{Dud}*{\S11.8}, but the third form of the definition is just a rewording of the second.
   The connection between $ d_{MK} $ convergence in $ \mathcal{M}_1(X) $ and   weak convergence is given by
  \[
  \nu_n \xrightarrow{d_{MK}} \nu  \quad \text{iff} \quad 
   \nu_n \xrightarrow{} \nu \text{ weakly and } \int d(x,a)\, d\nu_n(x) \to \int d(x,a)\, d\nu(x)
   \]
   for some and hence any $ a\in X $.   See \cite{Vil}*{Section 7.2}.
   
  Suppose  $ (X,d) $ is only assumed to be complete.    If measures $ \mu $ in $ \mathcal{M}_1(X) $ are also required to satisfy the condition $ \mu(X\setminus \spt\mu) = 0 $, then  $ ( \mathcal{M}_1(X), d_{MK} ) $ 
is complete, see~\cite{Kra} and  \cite{Fed}*{\S2.1.16}. This condition  is satisfied  for all finite Borel measures $ \mu $ if $ X $ has a dense subset whose cardinality is an Ulam number, and in particular if $ (X,d )$ is separable.  The requirement that the cardinality of $ X $ be an Ulam number  is not   very restrictive, see~\cite{Fed}*{\S2.2.16}.

\medskip

It is often more natural to use the following  strong Prokhorov metric rather than the Monge-Kantorovich or standard Prokhorov metrics in the case of  a uniformly contractive IFS.

\begin{definition}[\emph{Strong Prokhorov metric}]\label{mp}
The set  of compact  support, or bounded support,  unit mass Borel  measures  on $ X $  is denoted  by   $ \mathcal{M}_c (X)$ or $  \mathcal{M}_b(X) $, respectively. 

The     \emph{strong Prokhorov metric} $ d_P $ is defined on  $ \mathcal{M}_b(X) $
 in any of the following equivalent ways:
\begin{equation} \label{p}
\begin{aligned}
d_P(\mu,\mu') :=& \inf \left\{ \epsilon > 0 : \mu(A)\leq \mu'(A^\epsilon) \text{ for all Borel sets $ A \subset X $ }\right\}   \\
=& \inf \left\{ \text{ess sup}_\gamma\, d( x, y) : 
\gamma \text{ is a measure on } X\times X,\, \pi_1(\gamma)=\mu, \, \pi_2(\gamma) = \mu' \right\}  \\
=&  \inf \left\{ \text{ess sup}\, d( W, W') : 
            \dist W = \mu,\, \dist W' = \mu' \right\}, 
            \end{aligned} 
\end{equation}
where the notation is as in the paragraph following \eqref{mk}.  
\end{definition}
Note that
\[
 \mathcal{M}_c (X) \subset  \mathcal{M}_b(X)   \subset \mathcal{M}_1 (X) \subset \mathcal{M}(X).
\]
The first definition in \eqref{p} is symmetric in $ \mu $ and $  \mu'  $  by a standard argument, see~\cite{Dud}*{proof of Theorem 11.3.1}.  For discussion and proof of the second equality see \cite{Rachev}*{p160 eqn(7.4.15)}  and the other references mentioned there.  The third version is a probabilistic reformulation of the second.

\begin{proposition}\label{prop25}
$(\mathcal{M}_c(X), d_P ) $ and $(\mathcal{M}_b(X), d_P ) $  are complete.
If $ \nu,\nu' \in \mathcal{M}_b(X) $ then
\begin{equation*} \label{}
d_{\mathcal{H} } (\text{spt\,} \nu, \text{spt\,} \nu')  \leq d_P(\nu,\nu'), \quad 
d_{MK}(\nu,\nu')  \leq d_P(\nu,\nu').
\end{equation*} 
In particular,  $ \nu_k\to\nu $ in the $ d_P $ metric implies $ \nu_k\to\nu $ in the $ d_{MK} $ metric and $ \text{spt\,} \nu_k \to  \text{spt\,} \nu $ in the $ d_{\mathcal{H}} $ metric.  
\end{proposition}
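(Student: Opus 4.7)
The plan is to derive the two inequalities first — the ``in particular'' sentence is immediate from them — and then bootstrap off the completeness already recorded for $(\mathcal{M}_1(X), d_{MK})$ and for $(\mathcal{BC}(X), d_\mathcal{H})$ and $(\mathcal{C}(X), d_\mathcal{H})$ to obtain $d_P$-completeness. For the two inequalities I would use the probabilistic (third) formulations: fix any $\epsilon > d_P(\nu, \nu')$ and a coupling $(W, W')$ of $(\nu, \nu')$ with $d(W, W') \leq \epsilon$ almost surely. Then $\expected d(W, W') \leq \epsilon$, so $d_{MK}(\nu, \nu') \leq \epsilon$. Since $W' \in \spt \nu'$ a.s., $W$ lies almost surely in the closed set $(\spt \nu')^\epsilon$, so $\spt \nu \subset (\spt \nu')^\epsilon$; symmetry gives the reverse inclusion and so $d_\mathcal{H}(\spt \nu, \spt \nu') \leq \epsilon$. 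Letting $\epsilon \downarrow d_P(\nu, \nu')$ yields both inequalities.

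For completeness, let $(\nu_n)$ be $d_P$-Cauchy in $\mathcal{M}_b(X)$. By the inequalities just proved, $(\nu_n)$ is $d_{MK}$-Cauchy and $(\spt \nu_n)$ is $d_\mathcal{H}$-Cauchy. Completeness of $(\mathcal{M}_1(X), d_{MK})$ produces $\mu \in \mathcal{M}_1(X)$ with $\nu_n \to \mu$ in $d_{MK}$; completeness of $(\mathcal{BC}(X), d_\mathcal{H})$, or of $(\mathcal{C}(X), d_\mathcal{H})$ in the compactly supported case, produces a bounded closed (respectively compact) $K$ with $\spt \nu_n \to K$ in $d_\mathcal{H}$. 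A brief portmanteau argument — any $x$ with $d(x, K) > 0$ has an open ball around it disjoint from $\spt \nu_n$ eventually, and weak convergence forces $\mu$ to vanish on that ball — shows $\spt \mu \subset K$, so $\mu \in \mathcal{M}_b(X)$, respectively $\mathcal{M}_c(X)$.

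The remaining step is to upgrade $d_{MK}$-convergence to $d_P$-convergence. Given $\epsilon > 0$, pick $N$ with $d_P(\nu_n, \nu_m) < \epsilon$ for all $n, m \geq N$, so $\nu_n(A) \leq \nu_m(A^\epsilon)$ for every Borel $A$. For any such $A$ the set $A^\epsilon$ is closed, so the portmanteau theorem applied to $\nu_m \to \mu$ weakly gives $\limsup_m \nu_m(A^\epsilon) \leq \mu(A^\epsilon)$, whence $\nu_n(A) \leq \mu(A^\epsilon)$. Since the first definition of $d_P$ is symmetric in its two arguments, this already yields $d_P(\nu_n, \mu) \leq \epsilon$, so $d_P(\nu_n, \mu) \to 0$. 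The main subtlety I anticipate is exactly this last upgrade: it relies crucially on the closedness of $A^\epsilon$, which makes portmanteau applicable uniformly in the Borel set $A$ without any regularity approximation; if one were forced to approximate by open or closed sets individually, the natural fix would be a coupling-and-gluing argument using Polishness of $X$, which is available but heavier than needed here.
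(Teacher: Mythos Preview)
Your proof is correct and follows essentially the same route as the paper's: the inequalities via the coupling characterisations, then completeness by passing to the $d_{MK}$ limit and upgrading via the portmanteau bound $\limsup_m \nu_m(A^\epsilon) \leq \mu(A^\epsilon)$ for the closed set $A^\epsilon$. The paper derives the Hausdorff inequality directly from the first (set-theoretic) definition of $d_P$ rather than through a coupling, but this is a cosmetic difference.
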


\begin{proof}
The first inequality follows  directly from the definition of the Hausdorff metric and the second from  the final characterisations in \eqref{mk} and \eqref{p}.

 Completeness  of   $ \mathcal{M}_b(X) $ can be shown as follows  and this argument carries across to $ \mathcal{M}_c(X) $. Completeness of $ \mathcal{M}_c(X) $    is also shown in  \cite{Falconer86}*{Theorem 9.1}.

Suppose $ (\nu_k)_{k\geq 1} \subseteq (\mathcal{M}_b(X), d_P )$ is   $ d_P $-Cauchy.   It follows that $ (\nu_k)_{k\geq 1} $ is   $ d_{MK} $-Cauchy and hence converges to some measure $ \nu $   in the $ d_{MK} $ sense and in particular weakly.  Moreover, $ \text{spt\,}(\nu_k)_{k\geq 1} $ converges to some bounded closed set $ K $ in the Hausdorff sense,  hence $ \text{spt\,} \nu \subset K $ using weak convergence, and so $ \nu \in   \mathcal{M}_b(X) $.  Suppose $ \epsilon > 0 $ and using the fact $ (\nu_k)_{k\geq 1} $ is  $ d_P $-Cauchy choose $ J $  so $ k,j \geq J $ implies $ \nu_k(A) \leq \nu_j(A^\epsilon) $ for all Borel $ A\subset X $. By weak convergence and because $ A^\epsilon $ is closed,  $ \limsup_{j\to \infty}\nu_j(A^\epsilon) \leq \nu(A^\epsilon)  $  and  so $ \nu_k(A)\leq \nu(A^\epsilon) $ if $ k \geq J $.  Hence $ \nu_k \to \nu $ in the $ d_P $ sense.
\end{proof}

If $ (X,d) $ is only assumed to be complete,  but measures in $ \mathcal{M}_c(X)  $ and $ \mathcal{M}_b(X) $
  are also required to satisfy the condition $ \mu(X\setminus \spt\mu) = 0 $ as discussed following Definition~\ref{mx2}, then the same proof shows that 
 Proposition~\ref{prop25} is still valid.  The main point is that one still has completeness of 
$ ( \mathcal{M}_1(X), d_{MK} ) $.

\begin{remark}[\emph{The strong and the standard Prokhorov metrics}]\label{nsp} Convergence  in the  strong Prokhorov metric is  a  much  stronger requirement than convergence in the standard Prokhorov metric or the Monge-Kantorovitch metric.  A simple example  is given by $ X=[0,1] $ and 
$ \nu_n = (1-\frac{1}{n}) \delta_0 + \frac{1}{n}\delta_1$.  Then $ \nu_n \to \delta_0$ weakly and in the $ d_{MK} $ and  $ \rho $ metrics, but $ d_P(\nu_n, \delta_0)=1  $ for all~$ n $.     

The  strong Prokhorov metric   is normally not  separable.  For example, if  $ \mu_x = x \delta_0 + (1-x) \delta_1 $ for $ 0<x< 1 $ then $d _P(\mu_x,\mu_y)=1 $   for $ x\neq y $.     So there is no countable dense subset.
\end{remark}

If $ f:X \to X $ is Borel measurable then the \emph{pushforward  measure} $  f(\nu)  $ is defined by $  f(\nu)(A) = \nu(f^{-1}(A))   $ for Borel sets $ A $. 
 The \emph{scaling property} for Lipschitz functions $ f $, namely
\begin{equation} \label{spl}
 d_P(f(\mu),f(\nu)) \leq \Lip f\   d_P(\mu,\nu),
\end{equation}
follows from the definition of $ d_P $.   
 Similar properties are well known and easily established for the Hausdorff and Monge-Kantorovitch metrics.

\section{Iterated Function Systems}\label{secdf}

\begin{definition}\label{def22} 
An \emph{iterated functions system} [IFS]  $F = (X, f_\theta,  \theta \in \Theta, W) $   is a set of maps $ f_\theta:X\to X $ for $ \theta \in \Theta $,  where $ (X , d) $  is a \emph{complete separable} metric space   and $ W $ is a  probability  measure      on some $ \sigma $-algebra of  subsets of $ \Theta $.  The map 
$ (x,\theta) \mapsto f_\theta(x):X\times \Theta \to X $ is   measurable with respect to the product $ \sigma $-algebra on $ X\times \Theta $, using the Borel $ \sigma $-algebra   on $ X $.  
If  $ \Theta = \{1,\dots,M\}   $ is finite and $ W(m) = w_m $  then one writes $ F = (X,f_1,\dots, f_M, w_1,\dots, w_M) $. 
\end{definition}

It follows $ f $ is measurable in $ \theta $ for fixed $ x $ and in $ x $ for fixed $ \theta $.  Notation such as $  \expected_\theta $   is used to denote  taking the expectation, i.e.\ integrating, over the variable $ \theta $ with respect to $ W $.

Sometimes we will need to work with an IFS on a nonseparable metric space.  The properties which still hold in this case will be noted explicitly.  See Remark~\ref{nosep} and Theorem~\ref{th41}. 

The IFS $ F $ acts on subsets of $  X  $ and Borel measures over $ X $, for  finite and infinite  $ \Theta $  respectively, by
\begin{equation} \label{eq21} 
\begin{alignedat}{2}  
F(E)  &= \bigcup_{m=1}^M f_m(E),&\quad 
F(\nu) &= \sum_{m=1}^M w_m f_m(\nu),\\
F(E) &=  \bigcup_\theta f_\theta(E),& \quad 
  F(\nu ) &=   \int \! dW(\theta)\, f_\theta(\nu) .  
  \end{alignedat}
  \end{equation}  
We put aside measurability matters and interpret  the integral formally as   the measure which operates on  any Borel set   $ A\subset X $ to give   
$  \int \! dW(\theta)\,  (f_\theta(\nu))(A)  $.  The latter is   thought of as a weighted sum via $ W(\theta) $ of the measures $ f_\theta(\nu) $.   The precise definition in the cases we need for infinite $ \Theta $ is given by~\eqref{fnu}.

If $ F(E)=E $  or $ F(\nu) = \nu $ then $ E $ or $ \nu $   respectively    is said to be \emph{invariant} under the IFS  $ F $.

\medskip

In the study of fractal geometry one is usually interested in the case of an IFS with a finite  family $ \Theta $  of maps.  If $ X =  \mathbb{R}^2 $ then compact subsets  of $ X $ are often identified with black and white images, while measures are identified with greyscale images.   Images are generated by iterating the map $ F  $ to approximate  $ \lim_{k\to \infty } F^k(E_0) $ or $ \lim_{k\to \infty } F^k(\nu_0) $.  As seen in the following theorem, under natural conditions the limits exist and are independent of the starting set $ E_0 $  or measure~$ \nu_0 $.

In the study of  Markov chains on an arbitrary state space $ X  $ via  iterations of random functions on $ X  $, it is usually more natural to consider the case of an infinite  family $ \Theta $.    One is concerned with a random process $ Z^x_n $, in fact a Markov chain, with initial state $ x \in X $,  and 
\begin{equation} \label{28}
Z^x_0(\boldsymbol{i})=x, \quad  Z^x_{n}(\boldsymbol{i}) := f_{i_n}(Z^x_{n-1}(\boldsymbol{i}))
= f_{i_n}\circ \dots \circ f_{i_1}(x)
\text{ if } n\geq 1,
\end{equation}
where the $ i_n\in \Theta $ are independently and identically distributed [iid]  with probability distribution $  W $ and $ \boldsymbol{i}=i_1i_2 \dots $\, .  The \emph{induced 
probability measure on the set of codes $ \boldsymbol{i}  $}  is also denoted by $ W $.  

Note that the probability $ P(x,B) $ of going from $ x\in X $  into $  B \subset X $ in one iteration is $ W\{\theta: f_\theta(x) \in B \} $,  and $ P(x,\cdot) = \dist   Z^x_1 $.

More generally, if the starting state is given by a random variable $ X_0 $ independent of $ \boldsymbol{i} $ with $ \dist X_0 = \nu $, then one defines the random variable $ Z^\nu_n(  \boldsymbol{i} ) = Z ^{X_0}_n(  \boldsymbol{i} ) $.
The sequence $ \big( Z^\nu_n(  \boldsymbol{i}) \big)_{n\geq 0 } $ forms a Markov chain starting according to $ \nu $.
We define  $ F(\nu) = \dist Z_1^\nu $  and in summary we have
\begin{equation} \label{fnu} 
 \nu := \dist Z_0^\nu, \quad F(\nu) := \dist  Z^\nu_1, \quad F^n(\nu) = \dist  Z^\nu_n.
\end{equation} 

The operator $ F $ can be applied to  bounded  continuous functions $ \phi : X \to  \mathbb{R} $ 
via any of the following equivalent definitions: 
\begin{equation} \label{fphi}
(F(\phi))(x) = \int \phi(f_\theta(x))\, dW(\theta)\ \Bigl(\text{or } \sum_m w_m\phi(f_m(x)) \Bigr) = \expected_\theta\phi (f_\theta(x)) = \expected \phi ( Z^x_1).
\end{equation}
In the context of Markov chains,  the operator $ F $ acting on  functions is called the \emph{transfer operator}.
It follows from the definitions that  
$ \int F(\phi) \, d\mu = \int \phi \, d (F\mu) $,
which is the expected value of $ \phi $ after one time step starting with the initial distribution $ \mu $.  If one assumes 
$ F(\phi )$ is  continuous (it is automatically bounded) then $ F $ acting on measures is the \emph{adjoint} of $ F $ acting on bounded continuous functions.  Such  $  F $ are said to satisfy the \emph{weak Feller property} --- this is the case  if all  $ f_\theta  $ are continuous by the dominated convergence theorem, or if the pointwise average contractive condition is satisfied, see~\cite{Stenflo03-1}.

We will need  to apply the maps in \eqref{28} in the reverse order.  Define
\begin{equation} \label{28new}
 \widehat{Z}^x_0(\boldsymbol{i})=x,  \quad
\widehat{Z}^x_{n}(\boldsymbol{i}) = f_{i_1}\circ \dots \circ f_{i_n}(x)\text{ if } n\geq 1 .
\end{equation}
Then from the iid property of the $ i_n $ it follows 
\begin{equation}\label{nuphi}
F^n(\nu) = \dist  Z^\nu_n = \dist  \widehat{Z}^\nu_n   ,\quad
F^n(\phi)(x) =  \expected \phi(Z^x_n) = \expected \phi(\widehat{Z}^x_n).
\end{equation} 
However, the pathwise behaviour of the processes $  Z^x_{n} $ and $  \widehat{Z}^x_{n} $ are very different. Under  suitable conditions the former is ergodic and the latter is a.s.\ convergent, see  Theorems \ref{th1}.c and \ref{th1}.a respectively, and the discussion in \cite{DF}.

\medskip

The following Theorem \ref{th1} is known with perhaps two exceptions: the lack of a local compactness requirement in (c) and the use of the strong Prokhorov metric in (d).  

The strong Prokhorov metric, first used in the setting of random fractals in \cite{Falconer86}, is a more natural metric than the Monge-Kantorovitch metric  when dealing with uniformly contractive maps and fractal measures in compact spaces.
Either it or variants may be useful in image compression matters.  In Theorem~\ref{th41} we use it to strengthen the convergence results in \cite{BHS05}. 

The pointwise ergodic theorems for Markov chains for \emph{any} starting point as established in \cites{Breiman, Elton87, BEH, Elton90, MT}  require compactness or local compactness, see Remark~\ref{rmloc}. We  remove this restriction in Theorem~\ref{th1}.  The result is needed  in Theorem~\ref{th42} where we   consider an IFS operating on the space $( \mathcal{M}_1(X)^V,d_{MK} )$  of $  V $-tuples of probability measures. Like most spaces of functions or measures,  this space is not locally compact even if $ X= \mathbb{R}^k $.  See Remark~\ref{rmloc} and also Remark~\ref{rm54}.

We assume a pointwise average contractive condition, see Remark~\ref{rmlog}.  We need this in Theorem~\ref{th42}, see Remark~\ref{npc}.

The parts of the theorem have a long history.   In the Markov chain literature the contraction conditions  \eqref{eqmc} and \eqref{equcb}    were introduced in \cite{Isaac} and \cite{FD}
respectively  in order to establish ergodicity.   
 In the fractal geometry literature, following \cites{Man1, Man2}, the existence and uniqueness of attractors, their properties,  and the Markov chain approach to generating fractals, were introduced in \cites{Hutchinson81, BD85, DS, BE, Elton87, Elton90}.  See \cites{Stenflo98, Stenflo03-1, DF} for further developments and more on the history.

\begin{theorem}\label{th1}
Let $F = (X, f_\theta,  \theta \in \Theta, W) $ be an IFS on  a complete separable metric space $ (X,d) $.   Suppose  $ F $ satisfies the pointwise average contractive  and average boundedness  conditions 
\begin{equation} \label{eqmc} 
 \expected_\theta d(f_\theta(x), f_\theta(y))  \leq r d(x,y)
  \quad \text{and} \quad 
L:= \expected_\theta   d(f_\theta(a),a) < \infty 
\end{equation} 
 for some fixed $ 0<r< 1 $, all $ x,y\in X $, and some $ a \in X $. 

\medskip
\noindent \textnormal{\textbf{a.}}  For some function $ \Pi $,  all $ x,y\in X $ and all $ n $, we have
\begin{equation} \label{eqth0} 
 \expected d( Z^x_{n}( \boldsymbol{i}),  Z^y_{n}( \boldsymbol{i}))  
=
\expected  d(\widehat{Z}^x_{n}( \boldsymbol{i}),  \widehat{Z}^y_{n}( \boldsymbol{i})) \leq  r^n d(x,y), 
\quad 
\expected  d(\widehat{Z}^x_{n}( \boldsymbol{i}), \Pi ( \boldsymbol{i})) \leq \gamma_x r^n , 
\end{equation} 
where $ \gamma_x = \expected_\theta d(x,f_\theta(x)) \leq 
 d (x,a)+L/(1-r) $ and $ \Pi( \boldsymbol{i})$ is independent of $ x $. 
The map $ \Pi  $ is called the \emph{address map} from code space into $ X $.  Note that $ \Pi $  is defined only a.e.

If $  r< s<1 $ then for all $ x,y\in X $ for a.e.\ $ \boldsymbol{i}=i_1 i_2 \dots i_n \dots $ there exists $ n_0 = n_0( \boldsymbol{i}, s)$  such that 
 \begin{equation} \label{eqth1}  
 d( Z^x_{n}( \boldsymbol{i}),  Z^y_{n}( \boldsymbol{i})) \leq  s^n d(x,y),
 \quad 
 d(\widehat{Z}^x_{n}( \boldsymbol{i}),  \widehat{Z}^y_{n}( \boldsymbol{i})) \leq  s^n d(x,y), 
\quad 
 d(\widehat{Z}^x_{n}( \boldsymbol{i}), \Pi ( \boldsymbol{i})) \leq  \gamma_x s^n, 
\end{equation} 
for $ n\geq n_0 $.

\medskip
\noindent \textnormal{\textbf{b.}}  If   $ \nu $ is a unit mass Borel measure then
$ 
F^n(\nu) \to \mu \text{ weakly} 
$ 
where $ \mu := \Pi (W)$ is  the projection of the measure $ W $ on code space onto $ X $ via $ \Pi $. Equivalently, $ \mu $ is the distribution of $ \Pi $ regarded as a random variable.   
In particular, 
\[ 
F(\mu) = \mu
\]
and $ \mu $ is the unique  invariant unit mass measure.

The map $ F  $  is a contraction on  $ (\mathcal{M}_1(X), d_{MK}) $ with Lipschitz  constant  $ r  $.  Moreover,  $ \mu \in \mathcal{M}_1(X) $  and 
\begin{equation} \label{mkcon}  
d_{MK} (F^n(\nu), \mu) \leq  r^n d_{MK} ( \nu , \mu) 
\end{equation} 
 for every $ \nu  \in \mathcal{M}_1(X) $.

\medskip
\noindent \textnormal{\textbf{c.}}  For all $  x\in X $ and for a.e.\ $ \boldsymbol{i} $, the empirical measure (probability distribution)
\begin{equation} \label{eqth2} 
\mu^x_n(\boldsymbol{i}):= \frac{1}{n} \sum_{k=1}^n \delta_{ Z^x_k(\boldsymbol{i}) } \to \mu
\end{equation} 
weakly.  Moreover, if $ A  $ is the support of $ \mu  $ then there exists $ n_0 = n_0(\boldsymbol{i},x, \epsilon) $ such that
\begin{equation} \label{eqth3} 
Z^x_n(\boldsymbol{i}) \subset A^\epsilon   \text{ if } n \geq n_0 .
\end{equation} 

\medskip
\noindent \textnormal{\textbf{d.}}  Suppose $  F  $ satisfies the  uniform contractive  and  uniform boundedness  conditions
\begin{equation} \label{equcb} 
\sup\nolimits_\theta d(f_\theta(x), f_\theta(y))  \leq r d(x,y) 
\quad \text {and} \quad 
L:= \sup\nolimits_\theta d(f_\theta(a), a) <\infty 
\end{equation} 
  for  some  $ r< 1  $, all $ x,y\in X $, and  some $ a\in X $. 
   Then 
 \begin{equation} \label{eqth1a}  
 d( Z^x_{n}( \boldsymbol{i}),  Z^y_{n}( \boldsymbol{i})) \leq  r^n d(x,y),
 \quad 
 d(\widehat{Z}^x_{n}( \boldsymbol{i}),  \widehat{Z}^y_{n}( \boldsymbol{i})) \leq  r^n d(x,y), 
\quad 
 d(\widehat{Z}^x_{n}( \boldsymbol{i}), \Pi ( \boldsymbol{i})) \leq  \gamma_x r^n, 
\end{equation} 
for all $ x,y\in X $ and all $ \boldsymbol{i} $.   The address map  $ \Pi $  is everywhere defined  and is continuous with respect to the product topology defined on code space and induced from the  discrete metric on $ \Theta $.
  Moreover,  $ \mu =\Pi(W) \in \mathcal{M}_c $  and for any $ \nu  \in \mathcal{M}_b $, 
\begin{equation} \label{eqth4a}
d_{P} (F^n(\nu), \mu) \leq  r^n d_{P} ( \nu , \mu).
\end{equation} 

Suppose in addition $ \Theta $ is finite  and  $ W(\{\theta\})>0 $ for $ \theta \in \Theta $.  Then $ A  $ is compact  and for any closed bounded $ E $ 
\begin{equation} \label{eqth5a} 
d_{\mathcal{H} }( (F^n(E), A) \leq  r^n d_{\mathcal{H} } (E , A).
\end{equation} 
Moreover, $ F(A) = A $ and  $ A $  is the unique closed bounded invariant set.
\end{theorem}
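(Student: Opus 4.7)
For part (a), I would prove the bound $\expected d(Z^x_n,Z^y_n)\leq r^n d(x,y)$ by induction on $n$, conditioning on $i_1,\dots,i_{n-1}$ and applying the pointwise average contraction to the outermost map. The equality $\expected d(Z^x_n,Z^y_n)=\expected d(\widehat Z^x_n,\widehat Z^y_n)$ is immediate from the exchangeability of $(i_1,\dots,i_n)$. To construct the address map, telescope $\expected d(\widehat Z^x_{n+1},\widehat Z^x_n)\leq r^n\gamma_x$ by contracting through the common prefix $f_{i_1}\!\circ\!\cdots\!\circ\!f_{i_n}$; summability combined with Borel--Cantelli then makes $(\widehat Z^x_n)$ a.s.\ Cauchy, with limit $\Pi(\boldsymbol{i})$ independent of $x$ since $\expected d(\widehat Z^x_n,\widehat Z^y_n)\to 0$. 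The almost-sure geometric rate \eqref{eqth1} follows from Markov's inequality and Borel--Cantelli applied to the events $\{d(Z^x_n,Z^y_n)>s^n d(x,y)\}$, whose probabilities sum as $\sum (r/s)^n<\infty$.

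For (b), $\mu=\dist\Pi$ lies in $\mathcal{M}_1(X)$ by the bound $\expected d(\Pi,a)\leq\expected d(\widehat Z^a_n,\Pi)+\expected d(\widehat Z^a_n,a)\leq\gamma_a r^n+L/(1-r)$, and $F(\mu)=\mu$ is immediate from the identity $\Pi(\boldsymbol{i})=f_{i_1}(\Pi(\sigma\boldsymbol{i}))$ together with independence of $i_1$ from the shift $\sigma\boldsymbol{i}$. The $d_{MK}$-contraction uses the coupling characterisation in \eqref{mk}: a near-optimal coupling $(W,W')$ of $\mu,\mu'$ and an independent $\theta\sim W$ produce the coupling $(f_\theta(W),f_\theta(W'))$ of $F\mu,F\mu'$ with expected cost at most $r\,\expected d(W,W')$, giving \eqref{mkcon} after iteration. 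For weak convergence from an arbitrary $\nu\in\mathcal{M}(X)$, write $F^n\nu=\dist\widehat Z^{X_0}_n$ with $X_0\sim\nu$ independent of $\boldsymbol{i}$; for bounded continuous $\phi$, $\expected\phi(\widehat Z^x_n)\to\int\phi\,d\mu$ pointwise in $x$ by (a), and dominated convergence integrates this against $\nu$. Uniqueness is automatic since any invariant $\mu'$ satisfies $\mu'=F^n\mu'\to\mu$ weakly.

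Part (c) is the main obstacle, precisely because we forego local compactness, so the Breiman/Elton/Meyn--Tweedie arguments do not apply directly. My plan is to apply Birkhoff's ergodic theorem to the skew product $T(x,\boldsymbol{i})=(f_{i_1}(x),\sigma\boldsymbol{i})$ on $X\times\Theta^\infty$, which preserves $\mu\times W$ by (b); ergodicity reduces to uniqueness of $\mu$ as an $F$-invariant probability, since any non-trivial $T$-invariant set would, on disintegration, yield two distinct $F$-invariant measures. Birkhoff produces a full-measure set of $(y,\boldsymbol{i})$ on which the Cesaro averages of $\phi(Z^y_k(\boldsymbol{i}))$ converge to $\int\phi\,d\mu$ for every $\phi$ in a countable bounded-Lipschitz determining class; Fubini then selects one $y_0\in X$ that works for $W$-a.e.\ $\boldsymbol{i}$ and every such $\phi$. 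To extend to arbitrary $x\in X$, invoke the pathwise bound \eqref{eqth1}: for Lipschitz $\phi$, the Cesaro averages for $x$ and $y_0$ differ by a term of order $d(x,y_0)/(n(1-s))\to 0$, so convergence transfers, and density of Lipschitz functions gives \eqref{eqth2} for all bounded continuous $\phi$. For \eqref{eqth3}, the relation $F(\mu)=\mu$ with $\mu(X\setminus A)=0$ forces, via Fubini on $\mu\otimes W$, the existence of $y\in A$ whose forward orbit $(Z^y_n)$ stays in $A$ almost surely; then $d(Z^x_n,Z^y_n)\to 0$ a.s.\ puts $Z^x_n$ in $A^\epsilon$ eventually.

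For (d), uniform contractivity upgrades the expectation bounds of (a) to everywhere-valid pointwise bounds, so $\Pi$ is defined for every $\boldsymbol{i}$; the uniform estimate $d(\widehat Z^x_N(\boldsymbol{j}),\Pi(\boldsymbol{j}))\leq\gamma_x r^N$, combined with the fact that two codes agreeing on a long prefix give identical finite iterates, yields continuity of $\Pi$ in the product topology. For \eqref{eqth4a}, couple $\nu,\nu'\in\mathcal{M}_b$ by $(W,W')$ nearly achieving $d_P(\nu,\nu')$ and take an independent $\theta\sim W$; then $(f_\theta(W),f_\theta(W'))$ couples $F\nu,F\nu'$ with essential supremum of distance at most $r\,d_P(\nu,\nu')$, and iteration gives \eqref{eqth4a}. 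For the set statement with $\Theta$ finite and $W(\{\theta\})>0$, $F$ is an $r$-contraction on $(\mathcal{C}(X),d_{\mathcal H})$ and on $(\mathcal{B}\mathcal{C}(X),d_{\mathcal H})$ by the standard calculation $d_{\mathcal H}(\bigcup_m f_m(E),\bigcup_m f_m(E'))\leq \max_m \Lip f_m\cdot d_{\mathcal H}(E,E')$, so Banach's fixed point theorem yields a unique compact (respectively closed bounded) invariant set $A$; the identification $A=\spt\mu$ follows from $\spt F(\mu)=\overline{\bigcup_m f_m(\spt\mu)}$ under strict positivity of the $w_m$, combined with $F(\mu)=\mu$.
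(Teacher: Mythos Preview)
Your proposal is correct and follows the same overall architecture as the paper for all four parts: induction/telescoping and Borel--Cantelli for (a), dominated convergence plus the coupling form of $d_{MK}$ for (b), ergodic theorem for a $\mu$-typical starting point followed by transfer to arbitrary $x$ via the pathwise contraction for (c), and the straightforward uniform upgrades for (d).

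The one place worth flagging is the technical device in (c). You assume the existence of a countable bounded-Lipschitz class determining weak convergence on the separable space $(X,d)$ and intersect the Birkhoff null sets over that class. The paper instead \emph{constructs} such a class by Varadhan's trick: it replaces $d$ by a topologically equivalent totally bounded metric $e$ (Dudley, Thm~2.8.2), so that $\mathscr{BL}(X,e)$ becomes sup-norm separable, and then works with a countable dense subset of $\mathscr{BL}(X,e)$. The two approaches are equivalent in strength---indeed the paper's remetrization is one standard proof of the fact you invoke---but the paper's version is self-contained. Your argument for \eqref{eqth3} via a $\mu$-typical $y$ whose forward orbit stays in $A$ is slightly different from the paper's one-line reduction to the third inequality in \eqref{eqth1} (which goes through $\expected d(Z^x_n,A)\le\expected d(\widehat Z^x_n,\Pi)$ and Borel--Cantelli), but both are valid.
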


\begin{proof} 
 
\noindent \textbf{a.} The first inequality in \eqref{eqth0} follows from \eqref{28}, \eqref{28new} and contractivity.

Next fix  $ x $.  Since 
\begin{equation} \label{ineq1} 
\begin{aligned} 
\expected d(\widehat{Z}^x_{n+1}( \boldsymbol{i}), \widehat{Z}^x_{n}( \boldsymbol{i})) &\leq r \expected d(\widehat{Z}^x_{n}( \boldsymbol{i}), \widehat{Z}^x_{n-1}( \boldsymbol{i})),\\ 
 \expected d(\widehat{Z}^a_{n+1}( \boldsymbol{i}), \widehat{Z}^a_{n}( \boldsymbol{i})) &\leq r \expected d(\widehat{Z}^a_{n}( \boldsymbol{i}), \widehat{Z}^a_{n-1}( \boldsymbol{i}))  
 \end{aligned} 
\end{equation} 
for all $ n $, it follows that $ \widehat{Z}^x_{n}( \boldsymbol{i})$ and $  \widehat{Z}^a_{n}( \boldsymbol{i})  $ a.s.\ converge exponentially fast  to the same limit $ \Pi( \boldsymbol{i})$ (say) by the first inequality in \eqref{eqth0}.  It also follows that \eqref{ineq1}  is simultaneously true with $ \boldsymbol{i} $ replaced by $ i_{k+1}  i_{k+2} i_{k+3}, \dots $ for every $  k $.  It then follows from \eqref{28new}  that 
\[
 \Pi( \boldsymbol{i}) = f_{i_1}\circ \dots \circ f_{i_k} (\Pi( i_{k+1}  i_{k+2}  i_{k+3} \dots)).
\]

Again using \eqref{28new},
\[
\expected d(\widehat{Z}^x_{n}( \boldsymbol{i}),  \Pi( \boldsymbol{i}))
\leq r^n \expected d(x, \Pi( i_{n+1}  i_{n+2}  i_{n+3} \dots))
= r^n \expected d(x, \Pi(  \boldsymbol{i}))
\leq r^n\big( d(x,a) + \expected d(a, \Pi(  \boldsymbol{i}))\big).
\]
But 
\[
 \expected d(a, \Pi(  \boldsymbol{i})) \leq 
 \expected d(a,f_{i_1}(a)) 
       + \sum_{n\geq 1} 
     \expected d(f_{i_1}\circ \dots \circ f_{i_n}(a),f_{i_1}\circ \dots \circ f_{i_{n+1}}(a))
     \leq  \sum_{n\geq 0} r^n L
  = \frac{L}{1-r}.
\]
This gives the  second inequality in \eqref{eqth0}. See \cite{Stenflo98} for details.

The estimates in \eqref{eqth1} are the standard consequence that exponential convergence in mean implies a.s.\ exponential convergence.

\medskip 
\noindent \textbf{b.}  
Suppose $ \phi \in \mathscr{BC}(X,d) $, the   set of bounded continuous functions on $ X $.  Let $ \nu  $ be any unit mass measure. Since for a.e.\ $ \boldsymbol{i}$,
$\widehat{ Z}^x_{n}(\boldsymbol{i}) \to \Pi(\boldsymbol{i}) $ for every $ x$, using the continuity of $ \phi  $ and dominated convergence,
\begin{align*}
\int \phi\, d(F^n \nu) &
=\int \phi \, d(\dist \widehat{Z}^\nu_n)    \text{ (by \eqref{nuphi}) } 
 = \int \phi (\widehat{Z}^x_{n}( \boldsymbol{i}))\,  dW(\boldsymbol{i}) \, d \nu(x)\\
&\to \int \phi(\Pi(\boldsymbol{i}) \,  dW(\boldsymbol{i}) \, d \nu
           = \int \phi\, d\mu,
           \end{align*}
by the definition of  $ \mu $ for the last equality. Thus $F^n(\nu) \to \mu $  weakly.  
The invariance of $ \mu $ and the fact $ \mu $  is the unique invariant unit measure follow from
the weak Feller property.

One can verify that $F:  \mathcal{M}_1(X)  \to   \mathcal{M}_1(X) $ and $  F $ is a contraction map with Lipschitz constant  $ r $ in the $ d_{MK} $ metric. It is easiest to use the second or third form of \eqref{mk} for this. The rest of (\textbf{b}) now follows.  

 \medskip
\noindent \textbf{c.}  
The main difficulty here is that $ (X,d) $ may not be locally compact and so the space $ \mathscr{B} \mathscr{C} (X,  d )$ need not be separable, see Remark~\ref{rmloc}. 
We adapt an idea of Varadhan, see \cite{Dud}*{p399, Thm 11.4.1}.  

There is a totally bounded and hence separable, but not usually complete, metric $ e $ on $ X $ such that $ (X,e) $ and $ (X,d ) $ have the same topology, see  \cite{Dud}*{p72, Thm 2.8.2}.  Moreover, as the proof there shows, $ e(x,y) \leq   d(x,y)$.
Because the topology is preserved, weak convergence of measures on $ (X,d) $  is the same as weak convergence  on $ (X,e) $. 

Let  $\mathscr{B} \mathscr{L}(X,e) $ denote the  set  of bounded Lipschitz functions over $ (X,e) $.  Then $   \mathscr{B} \mathscr{L}(X,e) $ is separable in the \emph{sup  norm} from the total boundedness of $ e $. 

  Suppose $ \phi \in \mathscr{B} \mathscr{L} (X,e) $.  
By the ergodic theorem, since $ \mu  $ is the unique invariant measure for~$ F$, 
\begin{equation} \label{1} 
\int \phi \, d\mu_n^y(\boldsymbol{i}) =
\frac{1}{n} \sum_{k=1}^n \phi(Z_k^y(\boldsymbol{i})) \to \int \phi\,  d\mu 
\end{equation} 
for a.e.\ $ \boldsymbol{i} $   and  $ \mu $ a.e.\ $ y  $.   

 Suppose  $ x \in X $  and choose $ y\in X $  such that \eqref{1} is true. Using   (\textbf{a}),  for a.e.\ $ \boldsymbol{i} $ 
\[ 
e(Z_n^x(\boldsymbol{i}), Z_n^y(\boldsymbol{i})) \leq    d(Z_n^x(\boldsymbol{i}), Z_n^y(\boldsymbol{i}))
  \to 0.  
\]     
It follows from \eqref{1} and the uniform continuity of $ \phi $ in the $  e $  metric  that 
for a.e.\ $ \boldsymbol{i} $, 
\begin{equation} \label{11} 
\int \phi \, d\mu_n^x(\boldsymbol{i})  =
\frac{1}{n} \sum_{k=1}^n \phi(Z_k^x(\boldsymbol{i})) \to \int \phi\,  d\mu.
\end{equation} 

Let $ \mathcal{S} $ be a countable dense subset of $  \mathscr{B} \mathscr{L} (X,e) $ in the sup norm.
One can ensure that \eqref{11} is simultaneously true for all $ \phi \in \mathcal{S}  $.  By an approximation argument it follows  \eqref{11} is simultaneously true for all $  \phi \in  \mathscr{B} \mathscr{L} (X,e) $. 

Since ($ X,e) $ is separable, weak convergence of measures $ \nu_n \to \nu $ is equivalent to 
$ \int \phi \, d\nu_n \to  \int \phi \, d\nu $ for all  $\phi \in \mathscr{B} \mathscr{L} (X,e)  $, see \cite{Dud}*{Thm 11.3.3, p395}. Completeness  is not needed for this.  
It follows that   $ \mu^x_n(\boldsymbol{i}) \to \mu $ weakly as required.

The result \eqref{eqth3} follows from the third inequality in \eqref{eqth1}.

 \medskip \noindent \textbf{d.}
The three inequalities in \eqref{eqth1a} are straightforward as are the claims concerning $ \Pi $.

It follows readily from the definitions that each of $  \mathcal{M}_c(X) $, $  \mathcal{M}_b(X) $,
$ \mathcal{C}(X)  $ and $ \mathcal{B}  \mathcal{C} (X) $ are closed under $  F $,  and  that $  F $ is a contraction map with respect to $ d_P $  in the first two cases and $ d_{  \mathcal{H} }  $  in the second two cases.  The remaining results all follow easily. 
\end{proof}

\begin{remark}[\emph{Contractivity conditions}]\label{rmlog}
The \emph{pointwise} average contractive   condition  
 is implied by the \emph{global} average  
 contractive  condition
$ 
 \expected_\theta r_\theta := 
 \int r_\theta\, dW(\theta) \leq r
$,
where $  
r_\theta := \Lip f_\theta$. Although the global condition is frequently assumed, for our purposes  the  weaker pointwise  assumption is necessary, see Remark~\ref{npc}.

In some papers, for example \cites{DF,WW}, parts of Theorem~\ref{th1} are established or used under the 
 \emph{global log  average
 contractive} and average boundedness conditions 
\begin{equation} \label{} 
 \expected_\theta \log r_\theta <0 ,
  \quad  \expected_\theta  {r_\theta}^q  <\infty,
 \quad  \expected_\theta   d^q (a,f_\theta(a))  <\infty,
\end{equation} 
for some $ q > 0 $ and some $ a \in X $.
However, since  $ d^q $ is a metric for $0< q <1$   and since
$ \left(\expected_\theta g^q(\theta)\right)^{1/q} \downarrow 
   \exp\left(\expected_\theta \log g(\theta) \right) \text { as } q \downarrow  0 $
for $ g\geq 0 $,
such results  follow from Theorem~\ref{th1}.  In the main Theorem~5.2 of \cite{DF} the last two conditions are replaced by the equivalent algebraic tail condition.

One can even obtain in this way similar consequences under the yet weaker \emph{pointwise log average conditions}.    See also \cite{Elton87}. 

Pointwise average contractivity 
 is a much weaker requirement than global average contractivity.  A simple example in which $ f_\theta $ is discontinuous with positive probability is given by $ 0<\epsilon < 1 $, $ X = [0,1]$ with the standard metric $d  $,   $ \Theta = [0,1] $, $ W\{0\}= W\{1\} = \epsilon/2 $, and otherwise $ W $ is  uniformly distributed over $ (0,1) $ according to $ W\{(a,b) \} = (1-\epsilon) (b-a) $ for $ 0<a<b<1 $. Let $ f_{\theta}=\mathcal{X}_{[\theta,1]}$ be the characteristic function of $[\theta,1]$ for $ 0\leq \theta < 1 $ and let $ f_ 1   \equiv 0 $ be the zero function.  Then 
$ \expected_\theta   d(f_\theta(x), f_\theta(y)) \leq (1-\epsilon) d(x,y) $ for all $ x $ and $ y $.   The unique invariant measure is of course $ \frac{1}{2} \delta_0 + \frac{1}{2} \delta_1 $.  Uniqueness fails if $ \epsilon = 0 $.  A simple example where $ f_\theta $ is discontinuous with probability one is 
$ f_\theta  =\mathcal{X}_{ \{1\}}$     and $  \theta $ is chosen uniformly on the unit interval.  See \cites{Stenflo98, Stenflo03-1} for further examples and discussion.

 \end{remark}  

\begin{remark}[\emph{Alternative starting configurations}]
One can extend \eqref{eqth0}   by allowing the starting point  $ x $  to be distributed according to a distribution  $ \nu  $ and  considering the corresponding random variables $ \widehat{Z}^\nu_n(\boldsymbol{i}) $.  Then \eqref{mkcon} follows directly by using the random variables $ \widehat{Z}^\nu_n(\boldsymbol{i}) $ and $ \widehat{Z}^\mu_n(\boldsymbol{i}) $ in  the third form of \eqref{mk}.

Analogous remarks apply in deducing the distributional convergence results in Theorem~\ref{th1}.d from the pointwise convergence results,  via the  third form of \eqref{p}.
 \end{remark}

\begin{remark}[\emph{Local compactness issues}]\label{rmloc}
Versions of Theorem~\ref{th1}.c for locally compact $ (X,d) $ were established in \cites{Breiman, Elton87, BEH, Elton90, MT}.  In that case one first   proves    vague convergence in~\eqref{eqth2}. By  $ \nu_n\to \nu $ vaguely one means $ \int \phi d\nu_n \to \int \phi d\nu $ for all $ \phi \in \mathscr{C}_c(X) $  where $ \mathscr{C}_c(X) $ is the set  of compactly supported continuous functions $ \phi :X\to \mathbb{R} $.
The proof of vague convergence is straightforward from the ergodic theorem since $ \mathscr{C}_c(X) $ is separable.
Moreover, in a locally compact space, 
 vague convergence of probability measures to a probability measure implies weak convergence.  
That this  is not true for more general spaces is a consequence of the following discussion. 

The extension of Theorem~\ref{th1}.c to non locally compact spaces is needed in Section~\ref{secavcon} and Theorem~\ref{th42}.  In order to study images in $\mathbb{R}^k $  we  consider IFS's whose component functions act on  the space $ (\mathcal{M}_1(\mathbb{R}^k)^V,  d_{MK}) $ of  $   V $-tuples of unit mass measures over $ \mathbb{R}^k $, where   $ V $ is a natural number.
Difficulties already arise in proving that the chaos game converges a.s.\ from every initial $ V $-tuple of sets  even for $ V= k =1 $. 

To see this suppose  $ \nu_0 \in  \mathcal{M}_1(\mathbb{R}   )   $ and $ \epsilon > 0  $.  Then   $ B_{\epsilon }(\nu_0) := \{ \nu : d_{MK}(\nu,\nu_0)\leq \epsilon \} $ is not sequentially compact in the $  d_{MK} $ metric and so $  (\mathcal{M}_1(\mathbb{R} ), d_{MK})  $  is not locally compact. To show sequential compactness does not hold
  let 
$ 
\nu_n = \left(1-\dfrac{\epsilon}{n}\right) \nu_0 + \dfrac{\epsilon}{n} \tau_n\nu_0
$,
where $ \tau_n(x) =  x+n  $ is translation by $ n $ units in the $ x $-direction.
Then clearly $ \nu_n \to \nu_0 $ weakly.
Setting $ f(x) = x $ in~\eqref{mk},
\[
d_{MK}(\nu_n,\nu_0) \geq \int x\, d\nu_n - \int x\, d\nu_0\\ 
        =\left(1-\frac{\epsilon}{n}\right)\int x\, d\nu_0 + \frac{\epsilon}{n} \int (x+n)\, d\nu_0
             - \int x\, d\nu_0 \\
             = \epsilon. 
\]
On the other hand, let $ W $ be a random measure with  $ \dist W = \nu_0 $.  Independently of the value of $ W $ let $ W'=W $ with probability $ 1-\dfrac{\epsilon}{n} $ and $ W' = \tau_n W $ with probability $ \dfrac{\epsilon}{n} $.  Then again from~\eqref{mk}, 
\[
d_{MK}(\nu_n,\nu_0) \leq \expected  d_{MK}(W,W') = 
     \left(1-\frac{\epsilon}{n}\right)\times 0 + \frac{\epsilon}{n}\, n = \epsilon.
     \]
 It follows that $ \nu_n \in B_{\epsilon }(\nu_0) $ and $ \nu_n \nrightarrow \nu $  in the $ d_{MK} $ metric, nor does any subsequence.  Since $ d_{MK} $ implies weak convergence it follows that $ (\nu_n)_{n\geq 1} $ has no convergent subsequence in the $ d_{MK} $ metric.
 
 It follows that  $ \mathscr{C}_c( \mathcal{M}_1(\mathbb{R} ),  d_{MK}   ) $ contains only the zero function.  Hence  vague convergence in this setting is a vacuous notion and  gives no information about weak convergence.

Finally, we note that although (c) is proved here assuming the pointwise average contractive condition, it is clear that weaker  hypotheses concerning the stability of trajectories will suffice to extend known results from the locally compact setting.
\end{remark}
 
\begin{remark}[\emph{Separability and measurability issues}]\label{nosep}  If $ (X,d) $ is separable then   the class of Borel sets for the product topology on $ X \times X $ is the product   $ \sigma $-algebra of the class of Borel sets on $ X $ with itself, see~\cite{Bil}*{p 244}.  It follows that $ \theta \mapsto  d(f_\theta(x), f_\theta(y)) $ is measurable for each $ x,y \in X $ and so the quantities in~\eqref{eqmc}  are well defined.

Separability is not required
for the uniform contractive and uniform boundedness conditions  in \eqref{equcb} and  the conclusions in (\textbf{d}) are still valid with essentially the same proofs.  The spaces $ \mathcal{M}_c(X) $,  $ \mathcal{M}_b(X) $ and $ \mathcal{M}_1(X) $ need to be restricted to separable measures as discussed following Definition~\eqref{mx2} and Proposition~\ref{prop25}.

Separability   is also used  in the proof of (\textbf{c}).  
If one drops this condition   and assumes the uniform contractive and boundedness conditions \eqref{equcb} then a weaker version  of  (\textbf{c}) holds.  Namely, 
for every $ x\in X $ and every bounded continuous $ \phi\in \mathscr{B} \mathscr{C} (X)$, for   a.e.\  $ \boldsymbol{i} $ 
\begin{equation} \label{tfc} 
\int \phi \, d\mu^x_n(\boldsymbol{i}) \to \int \phi\,  d\mu.
\end{equation} 
The point is that unlike the situation in (\textbf{c}) under the hypothesis of separability, the set of such $ \boldsymbol{i} $ might depend on the function $ \phi $.

In Theorem~\ref{th41} we   apply Theorem~\ref{th1} to an IFS  whose component functions operate on  $ (\mathcal{M}_c(X)^V,  d_{P}) $ where   $ V $ is a natural number.  Even in the case $ V=1 $ this   space is not separable, see Example~\ref{nsp}. 

 \end{remark}

\section{Tree Codes and Standard Random  Fractals} \label{secifsrf} 

Let  $ \boldsymbol{F} = \{X, F^\lambda, \lambda  \in\Lambda, P\}   $ be a family of IFSs
as in \eqref{dfF1} and~\eqref{dfF2}.
We   assume the IFSs $  F^\lambda  $ are  \emph{uniformly contractive} and \emph{uniformly bounded}, i.e.\ for some   $0< r<1$, 
\begin{equation} \label{equc} 
\sup\nolimits_\lambda \max\nolimits_{m} d(f_m^\lambda(x), f_m^\lambda(y)) \leq r \,d(x,y) 
\quad \text{and} \quad L:=\sup\nolimits_\lambda \max\nolimits_m  d(f^\lambda_m(a),a) <\infty
\end{equation} 
for all $ x,y\in X  $  and some $ a\in X $.  
More general conditions are assumed in Section~\ref{secavcon}. 

\medskip

We often use $ * $ to indicate concatenation of sequences, either finite or infinite.

\begin{definition}[\emph{Tree codes}]\label{dftrees} 
The tree $ T $ is the  set of all  finite sequences from $ \{1,\dots,M\} $, including the empty sequence $ \emptyset $.
If $ \sigma = \sigma_1\dots\sigma_k \in T $ then the \emph{length} of $\sigma $ is $|\sigma |=k $  and $|\emptyset|=0 $.

A \emph{tree code} $ \omega $ is a map $ \omega : T \to \Lambda $.  The metric space $ (\Omega ,d ) $  of all tree codes  is defined by 
\begin{equation} \label{eqtrees}
\Omega = \{\omega \mid  \omega : T \to \Lambda\}, \quad  d( \omega, \omega')   = \frac{1}{M^k} 
\end{equation}
if $ \omega(\sigma) = \omega'(\sigma) $ for all $ \sigma $ with $ |\sigma|< k $ and 
$ \omega(\sigma) \neq \omega'(\sigma) $ for some $ \sigma $ with $ |\sigma|= k $.

A \emph{finite tree code  of height $  k $} is a map $ \omega: \{\sigma \in T : |\sigma | \leq k \} \to \Lambda$.   

If $ \omega \in \Omega $ and $ \tau \in T $ then the tree code $  \omega \rfloor\tau$ is defined by $  (\omega \rfloor \tau)\, (\sigma) :=    \omega ( \tau * \sigma )$.  It is the tree code obtained from $ \omega $ by starting at the node $ \tau $.  One similarly defines    $  \omega \rfloor\tau$ if $ \omega $ is a finite tree code of height $ k  $ and $ |\tau | \leq k $.

If $ \omega \in \Omega $ and $ k $ is a natural number then the finite tree code $ \omega\lfloor k $ defined by  $ (\omega\lfloor k)(\sigma) = \omega (\sigma) $  for $|\sigma |\leq k $.  It is obtained by truncating $ \omega $ at the level $ k $.
\end{definition}

The space   $(\Omega, d)$ is complete and bounded.  
If $  \Lambda $ is finite  then $(\Omega, d)$  is compact.   

The tree code $ \omega $ associates to each node   $ \sigma \in T $ the IFS $ F^{\omega(\sigma)}$.  It also associates to each   $ \sigma \neq \emptyset $ the function $ f_m^{\omega(\sigma')} $ where $ \sigma  =\sigma' *m $.  The $ M^k $ components of $ K_k^\omega   $ in \eqref{dKo} are then obtained by beginning with the set $ K_0  $ and iteratively applying the functions associated to the $ k $ nodes along each of the $ M^k $ branches of depth $  k $ obtained from $ \omega $.  A similar remark applies to $ \mu_k^\omega   $.

\begin{definition}[\emph{Fractal sets and measures}]\label{def28}
If  $ K_0 \in \mathcal{C} ( X) $ and $ \mu_0 \in \mathcal{M}_c(X) $ then the \emph{prefractal sets}  $ K_k^\omega $, the \emph{prefractal measures}  $ \mu_k^{\omega} $, the
\emph{fractal set} $ K^\omega $ and the \emph{fractal measure} $ \mu^{\omega} $,  are given by
\begin{equation} \label{dKo}
\begin{aligned}
K_k^\omega 
&=  \bigcup_{\sigma  \in T,\, |\sigma |=k} 
  f^{\omega(\emptyset)}_{\sigma_1}\circ f^{\omega(\sigma_1)}_{\sigma_2} 
    \circ f^{\omega(\sigma_1\sigma_2)}_{\sigma_3}\circ \dots    \circ  f^{\omega(\sigma_1\dots \sigma_{k-1})}_{\sigma_k}  (K_0), \\
 \mu_k^\omega 
&=    \sum_{\sigma  \in T,\, |\sigma |=k} 
 w^{\omega(\emptyset)}_{\sigma_1} w^{\omega(\sigma_1)}_{\sigma_2} 
   \cdot \ldots    \cdot  w^{\omega(\sigma_1\dots \sigma_{k-1})}_{\sigma_k}
     \, 
  f^{\omega(\emptyset)}_{\sigma_1}\circ f^{\omega(\sigma_1)}_{\sigma_2} 
   \circ \dots    \circ  f^{\omega(\sigma_1\dots \sigma_{k-1})}_{\sigma_k}  (\mu_0) ,\\ 
   K^\omega 
 &= \lim_{k\to \infty} K_k^\omega, \qquad 
\mu^\omega = \lim_{k\to \infty} \mu_k^\omega.
    \end{aligned} 
\end{equation} 
It follows from  uniform contractivity that for \emph{all}  $ \omega $ one has convergence in the Hausdorff and strong Prokhorov metrics respectively, and that
 $   K^\omega $ and $  \mu^\omega  $  are  independent of $ K_0 $ and $ \mu_0 $.  
 
 The collections of all such  fractals sets and measures for fixed  $ \{F^\lambda\}_{\lambda \in \Lambda } $ 
 are denoted  by 
\begin{equation} \label{eq29}
 \mathcal{K}_\infty = \{ K^\omega : \omega \in \Omega \} 
 , \quad     \mathcal{M}_\infty = \{ \mu^\omega : \omega \in \Omega \}. 
\end{equation}  

For each~$ k $ one has
\begin{equation} \label{subf}
K^\omega   = \bigcup_{| \sigma |=k} K^\omega_{\sigma },   \text{ where }
K^\omega_\sigma :=
  f^{\omega(\emptyset)}_{\sigma_1}\circ f^{\omega(\sigma_1)}_{\sigma_2} 
    \circ f^{\omega(\sigma_1\sigma_2)}_{\sigma_3}\circ \dots    \circ  f^{\omega(\sigma_1\dots \sigma_{k-1})}_{\sigma_k}  
    (K^{\omega \rfloor \sigma }).
\end{equation} 
 The $ M^k $ sets 
$ K^\omega_{\sigma } $ are called the \emph{subfractals} of $ K^\omega $  at level $ k $.
\end{definition} 

The maps  $ \omega \mapsto K^\omega   $  and $ \omega \mapsto \mu^\omega $ are   H\"{o}lder continuous.  More precisely: 
\begin{proposition}\label{prpdkw}
With $  L$  and $ r $  as in \eqref{equc},
\begin{equation} \label{dkw}
 d_{ \mathcal{H} }(K^\omega,K^{\omega'}) \leq \frac{2L}{1-r}\,  d^\alpha (\omega,\omega') \quad 
 \text{and} \quad d_{ P }(\mu^\omega,\mu^{\omega'}) \leq \frac{2L}{1-r}\,  d^\alpha (\omega,\omega'),
 \end{equation} 
 where $ \alpha  = \log (1/r) / \log M $.  
\end{proposition}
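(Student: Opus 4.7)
The plan is to exploit the subfractal decomposition \eqref{subf} together with the uniform contractivity hypothesis \eqref{equc}. Suppose $\omega,\omega'\in\Omega$ satisfy $d(\omega,\omega')=1/M^k$, i.e.\ they agree on every node of depth $<k$. Then in the level-$k$ decomposition of $K^\omega$ and $K^{\omega'}$ into $M^k$ subfractals, the outer $k$-fold compositions of maps are \emph{identical} for both codes, and each such composition has Lipschitz constant at most $r^k$.

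First I would establish a uniform diameter bound. Taking $K_0=\{a\}$ in \eqref{dKo} and telescoping,
\[
d\bigl(a,\, f^{\omega(\emptyset)}_{\sigma_1}\!\circ\cdots\circ f^{\omega(\sigma_1\dots\sigma_{k-1})}_{\sigma_k}(a)\bigr)
\leq \sum_{j=0}^{k-1} r^{j} L \leq \frac{L}{1-r},
\]
so every $K^\omega$ lies in $\overline{B_{L/(1-r)}(a)}$. In particular $d_{\mathcal{H}}(K^\omega,K^{\omega'})\leq 2L/(1-r)$ for \emph{any} pair. Combining this with the subfractal decomposition and the elementary inequality $d_{\mathcal{H}}(\bigcup_i A_i,\bigcup_i B_i)\leq \max_i d_{\mathcal{H}}(A_i,B_i)$,
\[
d_{\mathcal{H}}(K^\omega,K^{\omega'})
 \leq \max_{|\sigma|=k} r^{k}\, d_{\mathcal{H}}(K^{\omega\rfloor\sigma},K^{\omega'\rfloor\sigma})
 \leq r^{k}\cdot\frac{2L}{1-r}.
\]
Since $M^{\alpha}=1/r$ gives $r^{k}=(M^{-k})^{\alpha}=d^\alpha(\omega,\omega')$, the first inequality in \eqref{dkw} follows.

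For measures the plan is entirely analogous. The level-$k$ analogue of \eqref{subf} reads
\[
\mu^\omega = \sum_{|\sigma|=k} w^{\omega(\emptyset)}_{\sigma_1}\cdots w^{\omega(\sigma_1\dots\sigma_{k-1})}_{\sigma_k}\,\bigl(f^{\omega(\emptyset)}_{\sigma_1}\circ\cdots\circ f^{\omega(\sigma_1\dots\sigma_{k-1})}_{\sigma_k}\bigr)(\mu^{\omega\rfloor\sigma}),
\]
and the weights plus outer compositions coincide for $\omega$ and $\omega'$ because they depend only on the nodes of depth $<k$. The scaling property \eqref{spl} contributes the factor $r^{k}$, while a uniform support bound gives $d_P(\mu^\omega,\mu^{\omega'})\leq 2L/(1-r)$ via the trivial product coupling.

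The one non-routine step is the $\max$-inequality for $d_P$ of weighted sums with common weights: given couplings $\gamma_i$ of $\mu_i$ and $\nu_i$, the convex combination $\gamma=\sum_i w_i\gamma_i$ couples $\sum_i w_i\mu_i$ and $\sum_i w_i\nu_i$ and satisfies $\mathrm{ess\,sup}_\gamma d(x,y)=\max_{i:w_i>0}\mathrm{ess\,sup}_{\gamma_i} d(x,y)$. Using the third form of \eqref{p} this yields
\[
d_P\Bigl(\sum_i w_i\mu_i,\sum_i w_i\nu_i\Bigr)\leq \max_{i:w_i>0} d_P(\mu_i,\nu_i),
\]
which is the measure-side replacement for the set-side inequality $d_{\mathcal{H}}(\bigcup A_i,\bigcup B_i)\leq\max d_{\mathcal{H}}(A_i,B_i)$. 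With that in hand the rest of the argument is a verbatim transcription, completing the second inequality in \eqref{dkw}. This coupling inequality is really the only place where the measure case differs substantively from the set case and so is the natural focal point of the proof.
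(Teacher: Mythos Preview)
Your proof is correct and follows essentially the same route as the paper's: the uniform bound $d_{\mathcal{H}}(K^\omega,K^{\omega'})\leq 2L/(1-r)$ via $K_0=\{a\}$, the level-$k$ decomposition \eqref{subf} with common outer maps when $d(\omega,\omega')=M^{-k}$, and the identification $r^k=d^\alpha(\omega,\omega')$. You actually supply more detail than the paper does for the measure case---the paper simply says ``the proof for measures is essentially identical; one replaces $\mu_0$ by $\delta_a$''---whereas you spell out the coupling argument for $d_P\bigl(\sum_i w_i\mu_i,\sum_i w_i\nu_i\bigr)\leq\max_{i:w_i>0}d_P(\mu_i,\nu_i)$, which is indeed the right substitute for the union inequality on the set side.
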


\begin{proof}
Applying  \eqref{dKo} with  $ K_0 $ replaced by $  \{a\}$, and using \eqref{equc} and repeated applications of the triangle inequality, it follows that $ d_{\mathcal{H}}(K^\omega, a) \leq (1+r + r^2 +\cdots) L = L/(1-r) $ and so $ d_{\mathcal{H}} (K^\omega, K^{\omega'})
\leq 2L/(1-r) $ for any $ \omega $  and $ \omega'$.  
If $ d(\omega,\omega') = M^{-k} $  then
$ \omega(\sigma )  = \omega'( \sigma ) $ for $ |\sigma |< k $, and since $  d_{\mathcal{H}} (K^{\omega\rfloor \sigma}, K^{\omega'\rfloor \sigma}) \leq 2L/(1-r)$, it follows from \eqref{subf} and contractivity that $ d_{ \mathcal{H} }(K^\omega,K^{\omega'}) \leq \frac{2Lr^k}{1-r} $.  Since $ r^k = M^{-k\alpha} = d^\alpha (\omega,\omega') $, the  
 result for sets  follows.  
 
 The proof for measures is essentially identical; one replaces $ \mu_0 $ by $ \delta_{a} $ in  \eqref{dKo}.
 \end{proof}
 
\begin{figure}[htbp] 
   \centering
   \includegraphics[width=3.6in]{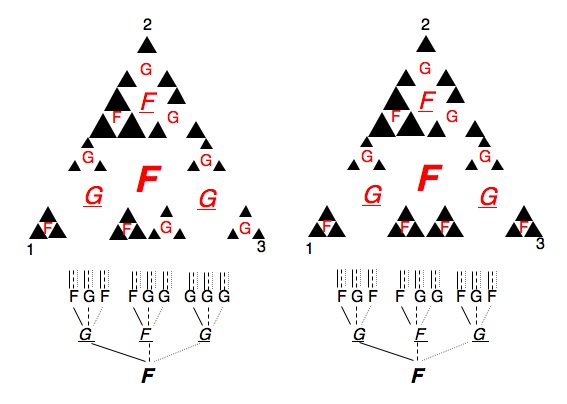} 
   \caption{Random Sierpinski triangles and tree codes}
   \label{fig:example}
\end{figure}

\begin{example}[\emph{Random Sierpinski triangles and tree codes}]
The relation between a tree  code $\omega $ and the corresponding fractal set $ K^\omega $ can readily be seen in Figure~\ref{fig:example}.  The IFSs  $F=(f_1,f_2,f_3) $ and $ G=(g_1,g_2,g_3) $ act on $ \mathbb{R}^2 $, and the $ f_m $  and  $g_m$  are similitudes with contraction ratios 1/2 and  1/3 respectively  and fixed point $ m $.  If a node is labelled $F$, then reading from left to right  the three main branches of the subtree associated with that node correspond   to $f_1, f_2, f_3 $ respectively.   Similar remarks apply  if the  node is labelled~$ G $.

If the three functions in $F$ and the three functions in $G$ each are given weights equal to $1/3 $ then the measure $ \mu^\omega $ is distributed over $ K^\omega $ in such a way that $ 1/3 $ of the mass is in each of the top level triangles, $1/9 $ in each of the next level triangles, etc.   
\end{example}

The sets $ K^\omega $ and measures $ \mu^\omega $ in Definition~\ref{def28} are not normally self similar in any natural sense.  However, there is an associated notion of statistical self similarity.  For this we need the following definition.

The  reason for the notation  $\rho_{\infty} $ in the following definition can be seen from Theorem~\ref{rc}. 

\begin{definition}[\emph{Standard random fractals}]\label{def211} The \emph{probability distribution $ \rho_\infty $ on $ \Omega $}  is defined  by choosing $ \omega (\sigma) \in \Lambda $ for each $ \sigma \in T $ in an iid manner according to  $ P $.
 
 The \emph{random set} $ \mathbb{K} = \omega\mapsto K^\omega   $ and the \emph{random measure} $ \mathbb{M} = \omega\mapsto \mu^\omega   $, each defined by choosing $ \omega \in \Omega $ according to ~$ \rho_\infty $, are called \emph{standard random fractals}.

 The induced \emph{probability distributions  on $  \mathcal{K}_\infty $ and $  \mathcal{M}_\infty $}  respectively are defined by  $ \mathfrak{K}_\infty = \dist  \mathbb{K}  $
 and $ \mathfrak{M}_\infty = \dist  \mathbb{M}  $.
\end{definition}

It follows from the definitions that $ \mathbb{K}   $ and $ \mathbb{M}   $ are  \emph{statistically  self similar} in the sense that 
\begin{equation} \label{IFSro} 
\dist \mathbb{K}  = \dist \mathbb{F}  (\mathbb{K}_1,\dots, \mathbb{K}_M), \quad
\dist \mathbb{M}  = \dist \mathbb{F}  (\mathbb{\mathbb{M} }_1,\dots, \mathbb{\mathbb{M} }_M),
\end{equation} 
where $ \mathbb{F}   $ is a random IFS chosen from $(F^\lambda)_{\lambda\in \Lambda} $ according to $P$,
$\mathbb{K}_1,\dots, \mathbb{K}_M $ are iid copies of $\mathbb{K} $ which   are independent of $ F $,  
and $\mathbb{M}_1,\dots,\mathbb{M}_M $ are iid copies of $\mathbb{M} $ which   are independent of $ \mathbb{F}   $.  

Here, and in the following sections, an IFS    $ F$ acts on $ M $-tuples of subsets    
 $ K_1,\dots, K_M $ of $  X $ and measures $  \mu_1,\dots, \mu_M $ over $ X $ by 
\begin{equation} \label{eq21a}
 F(K_1,\dots,K_M)  =  \bigcup_{m=1}^M f_m(K_m), \quad 
 F(\mu_1,\dots,\mu_M)  = \sum_{m=1}^M w_m f_m(\mu_m).
 \end{equation}
 This extends in a pointwise manner to random IFSs acting on random sets and random measures as in~\eqref{IFSro}. 

We use the terminology ``standard'' to distinguish the class of random fractals given by Definition~\ref{def211} and discussed in (\cites{Falconer86, MW86, Graf87,  HR98, HR00}) from other classes of random fractals in the literature.  

\section{$ V $-Variable Tree Codes}\label{secvvtc}

\subsection{Overview} We continue with the assumptions that  $ \boldsymbol{F} = \{X, F^\lambda, \lambda  \in\Lambda, P\}   $ is a family of IFSs
as in \eqref{dfF1} and~\eqref{dfF2}, and that $ \{F^ \lambda \}_{\lambda \in \Lambda } $  satisfies  the  uniform contractive  and  uniform bounded conditions~\eqref{equc}. 
In Theorem~\ref{th42} and Example~\ref{rm54} the uniformity conditions are replaced by pointwise average conditions. 

In Section~\ref{secvvar2}, Definition~\ref{Vvar}, we define  the set $ \Omega_V \subset \Omega $ of $ V $-variable tree codes, where $ \Omega $ is the set of tree codes in Definition~\ref{dftrees}.  Since  the $ \{ F^\lambda \}_{\lambda \in \Lambda} $ are uniformly contractive this leads directly to the class   $ \mathcal{K}_V $ of $ V $-variable fractal sets and the class $ \mathcal{M}_V $ of $ V $-variable fractal measures.

In Section~\ref{secifsv},  $ \Omega_V $ is alternatively  obtained from an IFS $ \boldsymbol{\Phi}_V = (\Omega^V, \Phi^a, a \in \mathcal{A}_V )$ acting on $ \Omega^V $.  More precisely, the attractor $ \Omega_V^* \subset \Omega^V $ of $\boldsymbol{\Phi}_V $ projects  in any of the $V$-coordinate directions
to $ \Omega_V $.  However, $   \Omega_V^* \neq (\Omega_V)^V$ and in fact there is a high degree of dependence between the coordinates of any $\boldsymbol{\omega }  =
 (\omega_1,\dots,\omega_V) \in \Omega_V^*$.
 
If 
\[
\boldsymbol{\omega}   =
 \lim_{k \to \infty}  \Phi^{a_0} \circ  \Phi^{a_1} \circ  \dots \circ  \Phi^{a_{k-1}}(\omega^0_1,\dots,\omega^0_V) 
\]
we say $   \boldsymbol{\omega}$ has address $ a_0a_1\dots a_k \dots  $\,.  The limit is independent of $   (\omega^0_1,\dots,\omega^0_V)$.

In Section~\ref{sec34}  a formalism is developed for finding the $ V $-tuple of tree  codes  $  (\omega_1,\dots,\omega_V) $ from the address $ a_0a_1\dots a_k \dots  $, see  Proposition~\ref{adct} and Example~\ref{rem17}.   Conversely, given a tree code  $  \omega $ one can find all possible addresses  $ a_0a_1\dots a_k \dots  $ of $ V $-tuples $  (\omega_1,\dots,\omega_V)\in  \Omega_V^*$ for which $ \omega_1=\omega $.

In Section~\ref{secpdtc}  the probability distribution $ \rho_V $ on the set $ \Omega_V $ of $ V $-variable tree codes is defined and discussed.   The probability $ P $ on $ \Lambda $ first leads to a natural probability $ P_V $ on the index set $ \mathcal{A}_V $ for the IFS $ \boldsymbol{\Phi}_V $, see Definition~\ref{df38}.   This then turns $  \boldsymbol{\Phi}_V $ into an IFS $  (\Omega^V, \Phi^a, a \in \mathcal{A}_V, P_V )$ with weights whose measure attractor  $ \rho_V^* $ is a probability distribution on its set attractor $\Omega_V^* $.  The projection of $ \rho_V^* $  in any coordinate direction is the same, is supported on $ \Omega_V $ and is denoted by $ \rho_V $, see Theorem~\ref{macs}.

\subsection{$V $-Variability}\label{secvvar2}

\begin{definition}[\emph{$ V $-variable  tree codes and fractals}] \label{Vvar}  A  tree code $ \omega\in\Omega$ is   \emph{$ V$-variable}   if for each  positive integer  $k $  there are   at most $ V $ distinct tree codes of the form $\omega \rfloor \tau $ with $|\tau |= k $.    The set of $V$-variable tree codes is denoted by~$\Omega_V $. 

Similarly a finite tree code  $ \omega $  of height $  p $  is \emph{$ V $-variable} if for  each $ k< p $ there are  at most $ V $ distinct finite subtree codes $\omega \rfloor \tau $ with $|\tau |= k $.   

For a uniformly contractive family $ \{F^\lambda\}_{\lambda \in \Lambda} $ of IFSs,
if $ \omega $ is $ V $-variable then  the fractal set $ K^\omega $ and fractal measure $ \mu^\omega $ in \eqref{dKo} are said to be    \emph{$V$-variable}.  The collections of all $ V $-variable sets and measures corresponding to $ \{F^\lambda\}_{\lambda \in \Lambda} $ are denoted by
\begin{equation} \label{eq31a} 
 \mathcal{K}_V= \{K^\omega : \omega \in \Omega_V \}, \quad 
  \mathcal{M}_V= \{\mu^\omega : \omega \in \Omega_V \}   
\end{equation}  
respectively, c.f.\ \eqref{eq29}.
\end{definition}

 If $ V=1 $  then $ \omega $ is $  V$-variable if and only if $ |\sigma| = |\sigma'| $ implies $\omega(\sigma) = \omega (\sigma') $, i.e.\ if and only if for each  $ k  $ all values of  $ \omega(\sigma) $  at level $k  $  are equal. In the case   $ V>1  $, if $ \omega $ is $  V$-variable then for each  $ k  $ there are at most $  V $  distinct values of $ \omega(\sigma) $  at level $k= |\sigma| $, but this is  not sufficient to imply $ V $-variability.

\begin{remark}[\emph{$ V $-variable terminology}]  \label{rm32}  
The motivation for the terminology ``$ V $-variable fractal''   is as follows. 
Suppose all functions $ f^\lambda_m $ belong to the same group $ G $  of transformations.  For example, if $X = \mathbb{R}^n  $ then $ G$  might be the group of invertible similitudes, invertible affine transformations or invertible projective transformations.  Two sets $ A $  and $  B $  are said to be  \emph{equivalent modulo  $ G $}  if $ A = g(B) $ for some $  g\in G $.
If  $ K^\omega $  is $ V$-variable and $ k $ is a positive integer, then there are at most $ V $ distinct trees of the form   $\omega \rfloor \sigma$ such that $ | \sigma | = k $.  If  $ | \sigma | = | \sigma' | = k $  and  
$\omega \rfloor \sigma =   \omega \rfloor \sigma' $, then from \eqref{subf}
\begin{equation} \label{vvarsets}
K^\omega_\sigma =  g(K^\omega_{\sigma'}) \text{ where }
g = 
 f^{\omega(\emptyset)}_{\sigma_1}\circ  \dots    
 \circ  f^{\omega(\sigma_1\dots \sigma_{k-1})}_{\sigma_k}  
 \circ  \left(f^{\omega({\sigma}'_1\dots{ \sigma}'_{k-1})}_{{\sigma}'_k}\right)^{-1}
 \circ \dots
 \circ    \left(f^{\omega(\emptyset)}_{{\sigma}'_1}\right)^{-1}
K^\omega_{\sigma'}.
\end{equation} 
In particular, $ K^\omega_\sigma $ and $   K^\omega_{\sigma'}$ are equivalent modulo $  G $.  

Thus  the subfractals of $ K^\omega $ at level $  k $ form at most $ V $ distinct equivalence classes modulo  $ G $.  However, the actual equivalence classes depend upon the  level.

Similar remarks apply to $ V $-variable fractal measures.
  \end{remark}

\begin{proposition} A tree code $ \omega $  is $ V $-variable iff for every positive integer $ k $  the finite tree codes $ \omega \lfloor k $ are  $ V $-variable.
\end{proposition}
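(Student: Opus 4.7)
The plan is to prove both directions by tracking the relationship between the restriction operation $\rfloor$ and the truncation operation $\lfloor$. The key compatibility is that for any $\omega \in \Omega$, any $\tau \in T$ with $|\tau| = k$, and any $p \geq k$, we have
\[
(\omega \lfloor p) \rfloor \tau \;=\; (\omega \rfloor \tau) \lfloor (p - k),
\]
directly from Definition~\ref{dftrees}. I would record this identity first as it is used in both directions.

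For the forward implication, suppose $\omega$ is $V$-variable. Fix $p \geq 1$ and some level $k < p$. The finite subtree codes of $\omega \lfloor p$ at level $k$ are exactly the truncations $(\omega \rfloor \tau)\lfloor (p-k)$ for $|\tau| = k$, by the identity above. Since equal infinite codes produce equal truncations, the number of distinct finite subtree codes is at most the number of distinct infinite codes $\omega \rfloor \tau$ with $|\tau| = k$, which is at most $V$. Hence $\omega \lfloor p$ is $V$-variable. This direction is essentially trivial.

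The harder direction is the converse, and it is where I expect the only real content to lie, though it is still short. Suppose every $\omega \lfloor p$ is $V$-variable, and assume toward a contradiction that $\omega$ is not $V$-variable. Then for some $k \geq 1$ there exist $V+1$ pairwise distinct infinite tree codes $\omega \rfloor \tau_1, \dots, \omega \rfloor \tau_{V+1}$ with $|\tau_i| = k$. Because any two distinct tree codes in $\Omega$ must differ at some node (and hence at some finite level), for each pair $i \neq j$ there is a smallest integer $N_{ij}$ such that $(\omega \rfloor \tau_i)\lfloor N_{ij} \neq (\omega \rfloor \tau_j)\lfloor N_{ij}$. Set $N = \max_{i \neq j} N_{ij}$, which is finite since there are finitely many pairs. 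Then the finite tree codes $(\omega \rfloor \tau_i)\lfloor N$ for $i = 1,\dots, V+1$ are pairwise distinct. Using the identity above with $p = k + N$, these equal $(\omega \lfloor (k+N)) \rfloor \tau_i$, so the finite code $\omega \lfloor (k+N)$ admits $V+1$ distinct subtree codes at level $k < k+N$, contradicting its $V$-variability.

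The only conceivable obstacle is verifying the compatibility identity carefully against the definitions of $\omega \rfloor \tau$ and $\omega \lfloor k$, and checking the edge case $k = 0$ (where the statement becomes trivial), but these are routine. The rest is pure bookkeeping.
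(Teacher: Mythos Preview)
Your proof is correct and follows essentially the same approach as the paper: the forward direction is immediate, and for the converse you argue by contrapositive, finding $V+1$ distinct infinite subtree codes at some level $k$ and then choosing a truncation height large enough that their finite restrictions remain pairwise distinct. The paper's proof is terser but identical in structure; your explicit compatibility identity $(\omega \lfloor p) \rfloor \tau = (\omega \rfloor \tau)\lfloor(p-k)$ and the choice $N = \max_{i\neq j} N_{ij}$ simply spell out what the paper leaves to the reader.
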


\begin{proof}  If $ \omega $  is $  V $-variable   the same is true for every finite tree code of the form  $ \omega \lfloor k $.

If $ \omega $  is not $  V $-variable then for some $  k $ there are at least $ V+1 $ distinct subtree codes $ \omega \rfloor \tau $  with $ | \tau | = k $. But then for some  $  p$  the $ V + 1 $ corresponding finite tree codes $ (\omega \rfloor \tau)\lfloor p $    must also be distinct.  It follows  $ \omega\lfloor  (k+ p) $ is not $ V $-variable.
\end{proof}

\begin{example}[\emph{$ V $-variable Sierpinski triangles}]
The first tree code in Figure~\ref{fig:example} is an initial segment of a 3-variable tree code but not of a  2-variable tree code, while the second tree is an initial segment of a 2-variable tree code but not of a  1-variable tree code.
The corresponding Sierpinski type triangles are, to the level of approximation shown, 3-variable and 2-variable respectively.
\end{example}

\begin{theorem} \label{prop33}
The $ \Omega_V $ are closed and nowhere dense in $ \Omega $, and
\[
\Omega_V\subset \Omega_{V+1}, 
 \quad d_{\mathcal{H}}(\Omega_V,\Omega ) <\frac{1}{V},
 \quad \bigcup_{V\geq 1}\Omega_V\subsetneqq
 \overline{\bigcup_{V\geq 1}\Omega_V}=\Omega,
 \] 
where the bar denotes closure in the metric $ d $. 
\end{theorem}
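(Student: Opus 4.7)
The five assertions split naturally into three kinds of statement, and I would handle them in the order: inclusion and closedness first, then the uniform Hausdorff bound (which automatically yields the density part of the last claim), then nowhere denseness, and finally the strict inclusion via Baire category. Throughout I shall assume $|\Lambda|\geq 2$ (the case $|\Lambda|=1$ makes everything trivial but kills the nowhere dense statement; the theorem is only interesting in the generic case, and this assumption seems implicit). The inclusion $\Omega_V\subset\Omega_{V+1}$ is immediate from the definition. For closedness, I would use the preceding proposition (the finite characterisation of $V$-variability): if $\omega\notin\Omega_V$ then some $\omega\lfloor p$ is not $V$-variable; but any $\omega'$ with $d(\omega,\omega')<M^{-p}$ satisfies $\omega'\lfloor p = \omega\lfloor p$ and so $\omega'\notin\Omega_V$, i.e.\ the complement is open.

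For the Hausdorff bound, since $\Omega_V\subset\Omega$ it suffices to bound $\sup_{\omega\in\Omega}\inf_{\omega'\in\Omega_V} d(\omega,\omega')$. Let $k$ be the unique integer with $M^{k-1}\leq V<M^k$. Fix $\lambda_0\in\Lambda$, and given any $\omega\in\Omega$ define
\[
\omega'(\sigma) = \begin{cases}\omega(\sigma) & |\sigma|<k,\\ \lambda_0 & |\sigma|\geq k.\end{cases}
\]
At each level $j\geq k$ every subtree $\omega'\rfloor\tau$ is the constant-$\lambda_0$ tree, so only one distinct subtree appears. At each level $j<k$ there are at most $M^j\leq M^{k-1}\leq V$ positions and hence at most $V$ distinct subtrees. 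Thus $\omega'\in\Omega_V$ and $d(\omega,\omega')\leq M^{-k}<1/V$, which gives both $d_{\mathcal{H}}(\Omega_V,\Omega)<1/V$ and, by letting $V\to\infty$, the density $\overline{\bigcup_V \Omega_V}=\Omega$.

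The core step is nowhere denseness. Given $\omega\in\Omega_V$ and $\epsilon>0$, I would choose $p$ large enough that $M^{-p}<\epsilon$ and $M^p\geq V+1$. Using $|\Lambda|\geq 2$, fix $V+1$ pairwise distinct tree codes $\eta_0,\dots,\eta_V\in\Omega$ and $V+1$ distinct words $\tau_0,\dots,\tau_V\in T$ of length $p$. Define $\omega^*$ by $\omega^*(\sigma)=\omega(\sigma)$ for $|\sigma|<p$, by $\omega^*(\tau_i*\rho)=\eta_i(\rho)$ for each $i$ and each $\rho\in T$, and by $\omega^*(\sigma)=\omega(\sigma)$ at all other nodes of depth $\geq p$. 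Then $\omega^*\rfloor\tau_i=\eta_i$ for $i=0,\dots,V$, yielding at least $V+1$ distinct subtrees at level $p$, so $\omega^*\notin\Omega_V$; and $\omega^*$ agrees with $\omega$ below level $p$, so $d(\omega,\omega^*)\leq M^{-p}<\epsilon$. The main obstacle here is purely bookkeeping: making sure one can produce $V+1$ honestly distinct subtrees, which is why one needs the hypothesis $|\Lambda|\geq 2$ and a level $p$ with $M^p\geq V+1$.

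Finally, for the strict inclusion in the last display, the space $(\Omega,d)$ is complete (as noted after \eqref{eqtrees}), and each $\Omega_V$ is closed and nowhere dense by the previous two steps. By the Baire category theorem, $\bigcup_{V\geq 1}\Omega_V$ is meagre in $\Omega$ and therefore cannot equal $\Omega$, giving $\bigcup_V\Omega_V\subsetneqq\Omega$. The equality $\overline{\bigcup_V\Omega_V}=\Omega$ has already been noted as a consequence of the Hausdorff estimate.
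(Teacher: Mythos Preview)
Your argument is correct. The paper itself only writes out the Hausdorff estimate and dismisses everything else with ``the remaining assertions are clear,'' so in fact you have supplied considerably more than the paper does. Your proof of the inequality $d_{\mathcal{H}}(\Omega_V,\Omega)<1/V$ is the same as the paper's up to a shift in the index: the paper takes $k$ with $M^k\leq V<M^{k+1}$ and keeps $\omega'(\sigma)=\omega(\sigma)$ for $|\sigma|\leq k$, whereas you take $k$ with $M^{k-1}\leq V<M^k$ and keep agreement for $|\sigma|<k$; these are the same construction.

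Two minor remarks. First, your nowhere-denseness construction also needs $M\geq 2$ in addition to $|\Lambda|\geq 2$, since you require $M^p\geq V+1$; when $M=1$ every tree code is $1$-variable and the theorem's nowhere-dense and strict-inclusion claims are simply false, so this hypothesis is as implicit as the one you already flagged. Second, your Baire-category argument for the strict inclusion $\bigcup_V\Omega_V\subsetneqq\Omega$ is clean and arguably nicer than the obvious alternative of writing down an explicit tree code with infinitely many distinct subtrees at every level; the paper gives no hint either way.
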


\begin{proof}
For the inequality suppose $ \omega \in  \Omega $ and  define  $ k $ by $ M^{k}\leq V < M^{k+1}$.  Then if $  \omega' $ is chosen so $ \omega' ( \sigma  )   = \omega(\sigma) $ for $ | \sigma |\leq k $ and 
$ \omega' ( \sigma  )  $ is constant for $ | \sigma |> k $, it follows 
$  \omega' \in \Omega_V $ and $ d( \omega', \omega) \leq M^{-(k+1)}<V^{-1} $, hence 
$ d_{\mathcal{H}}(\Omega_V,\Omega ) <V^{-1} $.  The remaining assertions are clear.
\end{proof}
 
\subsection{An IFS Acting on $ V $-Tuples of  Tree Codes}\label{secifsv}
\begin{definition}\label{omv}
The metric space $ (\Omega^V, d) $ is the set of $ V $-tuples from $ \Omega $  with the metric
\begin{equation*} \label{}
 d\big((\omega_1\dots \omega_V), (\omega'_1\dots \omega'_V)\big) 
             = \max_{1\leq v \leq V} d( \omega_v, \omega_v') ,
             \end{equation*} 
where $ d$  on the right side is as in Definition~\ref{dftrees}.
\end{definition}
This is a   complete bounded  metric  and  is compact if $  \Lambda  $ is finite since the same its true for $ V=1 $. See Definition~\ref{dftrees} and the comment which follows it. 
   The induced Hausdorff metric on $ \mathcal{B} \mathcal{C} ( \Omega^V) $ is complete and bounded, and is compact if $  \Lambda  $ is finite.  See the comments following Definition~\ref{cx}.  

\medskip
The notion of $V$-variability extends to  $ V $-tuples of tree codes, $ V $-tuples of sets and $ V $-tuples of measures.
\begin{definition}[\emph{$ V $-variable $ V $-tuples}]\label{df34} The $ V $-tuple of tree codes
$ \boldsymbol{\omega}  = ( \omega_1, \dots , \omega_V) \in \Omega^V$     is    \emph{$ V $-variable}   if  for each positive integer $ k $  there are  at most $ V $ distinct  subtrees   of the form $\omega_v \rfloor \sigma $ with $ v \in \{1,\dots,V\} $ and $|\sigma |= k $.  The \emph{set of $ V $-variable $ V $-tuples of tree codes} is denoted by $ \Omega_V^*$.

Let  $ \{F^\lambda\}_{\lambda \in \Lambda} $ be  a uniformly contractive family of IFSs. The corresponding   sets $ \mathcal{K}^*_V $ of $ V $-variable $ V $-tuples of fractal sets,  and   $ \mathcal{M}^*_V $ of $ V $-variable $ V $-tuples of fractal measures,  are
\[  \mathcal{K}^*_V= 
\{(K^{\omega_1},\dots,K^{\omega_V}) : (\omega_1,\dots,\omega_V) \in \Omega_V^* \} , \quad 
      \mathcal{M}^*_V= 
\{(\mu^{\omega_1},\dots,\mu^{\omega_V}) : (\omega_1,\dots,\omega_V) \in \Omega_V^* \} ,
\]
where $ K^{\omega_v} $ and $ \mu^{\omega_v}$ are as in Definition~\ref{def28}.
\end{definition}

\begin{proposition}
The projection of $   \Omega_V^*$ in any coordinate direction equals $ \Omega_V $, however \mbox{$   \Omega_V^* \subsetneqq (\Omega_V)^V$}.  
\end{proposition}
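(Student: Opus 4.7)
The plan is to establish the equality of projections by a direct double inclusion, and then to exhibit a concrete element of $(\Omega_V)^V \setminus \Omega_V^*$. The argument is purely combinatorial; only Definitions~\ref{Vvar} and~\ref{df34} are needed.

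For the projection claim, first suppose $(\omega_1,\dots,\omega_V)\in\Omega_V^*$ and fix a coordinate $v$. The family $\{\omega_v\rfloor\sigma : |\sigma|=k\}$ is contained in $\{\omega_u\rfloor\sigma : 1\leq u\leq V,\ |\sigma|=k\}$, and by Definition~\ref{df34} the latter has at most $V$ distinct members, so $\omega_v\in\Omega_V$. Conversely, given $\omega\in\Omega_V$, the diagonal tuple $(\omega,\dots,\omega)$ lies in $\Omega_V^*$ because its depth-$k$ subtrees are exactly the depth-$k$ subtrees of $\omega$; this tuple projects to $\omega$ in every coordinate.

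For the strict inclusion, I would write down an explicit counterexample in the nondegenerate case $V\geq 2$, $M\geq 2$, $|\Lambda|\geq V+1$. Writing $[\lambda]$ for the constant tree code $\sigma\mapsto\lambda$, pick pairwise distinct $\lambda_1,\dots,\lambda_{V+1}\in\Lambda$, set $\omega_v=[\lambda_v]$ for $v=1,\dots,V-1$, and let $\omega_V$ be any tree code with $\omega_V\rfloor 1=[\lambda_V]$ and $\omega_V\rfloor 2=[\lambda_{V+1}]$, extending on the remaining $M-2$ branches using only the labels $\lambda_V$ or $\lambda_{V+1}$. Each $\omega_v$ lies in $\Omega_V$: the first $V-1$ are $1$-variable, and $\omega_V$ is $2$-variable since at every positive depth all of its subtrees lie in the two-element set $\{[\lambda_V],[\lambda_{V+1}]\}$. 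However, at depth $k=1$ the tuple already exhibits the $V+1$ pairwise distinct subtrees $[\lambda_1],\dots,[\lambda_{V+1}]$, so $(\omega_1,\dots,\omega_V)\notin\Omega_V^*$.

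No real obstacle arises; the one thing to verify is that each component of the counterexample is genuinely $V$-variable, which reduces to the observation that a tree all of whose level-$1$ subtrees are constant is $2$-variable. The proposition is vacuous for $V=1$ and requires a mild nondegeneracy such as the one assumed above; for smaller $|\Lambda|$ one can run a variant of the same construction using a couple of $2$-variable trees built on two labels, which still produces at least three distinct level-$1$ subtrees among the $V$ coordinates.
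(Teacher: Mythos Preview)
Your proof is correct and follows the same approach as the paper: the diagonal tuple $(\omega,\dots,\omega)$ for surjectivity and a counterexample for the strict inclusion, though you supply the explicit example and the ``projection lands in $\Omega_V$'' containment that the paper's one-line sketch omits. One quibble: for $V=1$ (or $M=1$, or $|\Lambda|=1$) the strict inclusion actually \emph{fails} rather than being vacuous, so the nondegeneracy you flag is genuinely needed for the statement as written.
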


\begin{proof}
To see the projection map is onto consider $ (\omega,\dots,\omega) $ for $ \omega \in \Omega_V $.  To see $   \Omega_V^* \subsetneqq (\Omega_V)^V$ note that a $ V $-tuple of $ V $-variable tree codes need not itself be $ V $-variable.
\end{proof}

\begin{notation}\label{concat}
Given $ \lambda \in \Lambda $ and $ \omega_1,\dots, \omega_M \in \Omega $ define $  \omega = \lambda *( \omega_1,\dots, \omega_M)  \in   \Omega $ by
$ 
  \omega(\emptyset) =  \lambda $  and $  \omega(m\sigma) = \omega_m(\sigma)   
 $. 
 Thus $  \lambda *( \omega_1,\dots, \omega_M)$ is the tree code with $  \lambda $ at the base node $ \emptyset $ and the tree  $ \omega_m $ attached to the node  $  m $  for $ m=1,\dots, M $.
 
 Similar notation applies if the $ \omega_1,\dots,\omega_M $  are finite tree codes all of the same height.
 \end{notation}
 
We define maps on $ V$-tuples of tree codes and a corresponding IFS on $ \Omega^V  $   as follows.
\begin{definition}[\emph{The IFS acting on the set of $  V $-tuples of tree codes}]\label{dfIFStr}
Let $ V $ be  a positive integer. Let   $\mathcal {A}_V  $ be the  set  of all pairs of maps  $ a =(I,J) = (I^a,J^a)$, where   
\[
 I:\{1,\dots,V\}\to \Lambda, \quad  J:\{1,\dots,V\}\times \{1,\dots,M\} \to \{1,\dots,V\} .
 \] 
For $ a \in \mathcal{A}_V $ the map $ \Phi^a    : \Omega^V \to \Omega ^V $ is defined for $  \boldsymbol{\omega} = (\omega_1,\dots,\omega_V) $
by 
\begin{equation} \label{phia}
\Phi^a (\boldsymbol{\omega})  = (\Phi^a_1(\boldsymbol{\omega}),\dots, \Phi^a_V(\boldsymbol{\omega}) ), \quad 
\Phi^a_v (\boldsymbol{\omega})  = I^a (v) * (\omega_{J^a (v,1)},\dots, \omega_{J^a (v,M)}).
\end{equation}  
Thus $\Phi_v^a (\boldsymbol{\omega}) $ is the tree code with base node $ I^a (v)$, and 
at the end of  each of its $ M $  base branches
are attached  copies of $ \omega_{J^a (v,1)},\dots, \omega_{J^a (v,M)}   $ respectively.

The  \emph{IFS $ \boldsymbol{\Phi}_V$ acting on $  V $-tuples of tree codes} and without a probability distribution at this stage is defined by
\begin{equation} \label{trifs}
\boldsymbol{\Phi}_V:=(\Omega^V, \Phi^a, a \in \mathcal{A}_V ) .
\end{equation} 
\end{definition}

\emph{Note that   $\Phi^a: \Omega_V^* \to \Omega_V^* $} for each $ a \in \mathcal{A}_V $.    

\begin{notation}
It is often convenient to write $ a =(I^a,J^a) \in \mathcal {A}_V$  in the form
\begin{equation} \label{nIJ} 
a = \begin{bmatrix} I^a (1) & J^a (1,1) & \dots &J^a (1,M) \\
                                   \vdots & \vdots & \ddots  & \vdots \\
                                   I^a (V) & J^a (V,1) & \dots & J^a (V,M) 
       \end{bmatrix}.
       \end{equation} 
Thus $ \mathcal{A}_V $ is then the set of all $ V \times (1+M) $ matrices  with entries in the first column belonging to $ \Lambda  $ and all other entries belonging to $ \{1,\dots,V\} $.
\end{notation}

\begin{theorem}\label{char2}
Suppose $ \boldsymbol{\Phi}_V = (\Omega^V, \Phi^a, a \in \mathcal{A}_V )$ is an IFS  as in Definition~\ref{dfIFStr}, with $ \Lambda $ possibly infinite. Then each $ \Phi^a $ is a contraction map with Lipschitz constant $ 1 /M $.  Moreover, with $ \boldsymbol{\Phi}_V  $ acting on subsets of $ \Omega^V $ as in~\eqref{eq21} and using the notation of Definition~\ref{cx}, we have 
$ \boldsymbol{\Phi}_V  : \mathcal{B} \mathcal{C} (\Omega^V) \to \mathcal{B} \mathcal{C} (\Omega^V) $ and $ \boldsymbol{\Phi}_V$ is a contractive map with Lipschitz constant 1/M.  The unique fixed point of $ \boldsymbol{\Phi}_V$  is $ \Omega_V^* $ and in particular its projection in any coordinate direction equals~$ \Omega_V $.
\end{theorem}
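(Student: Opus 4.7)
My plan is to verify the contractivity of each generator map $\Phi^a$ first, upgrade that to contractivity of $\boldsymbol{\Phi}_V$ on $\mathcal{BC}(\Omega^V)$ by the standard Hutchinson argument, and then separately identify the fixed set as $\Omega_V^*$ by showing both invariance and closedness.

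\textbf{Step 1 (contraction ratio of $\Phi^a$).} The key observation about the metric on $\Omega$ is that $d(\omega,\omega')\leq 1/M^k$ iff $\omega$ and $\omega'$ agree on every node $\sigma$ with $|\sigma|<k$. The map $\boldsymbol{\omega}\mapsto \Phi_v^a(\boldsymbol{\omega})=I^a(v)*(\omega_{J^a(v,1)},\dots,\omega_{J^a(v,M)})$ prepends the \emph{same} root label $I^a(v)$ and attaches the tree $\omega_{J^a(v,m)}$ at the level-one node $m$. Hence if the subtrees entering at level $1$ already agree on all internal nodes up to depth $k-1$, the grafted trees agree up to depth $k$. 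Taking $\delta=d(\boldsymbol{\omega},\boldsymbol{\omega}')$, this gives $d(\Phi^a_v(\boldsymbol{\omega}),\Phi^a_v(\boldsymbol{\omega}'))\leq\delta/M$, and taking the max over $v$ yields $d(\Phi^a(\boldsymbol{\omega}),\Phi^a(\boldsymbol{\omega}'))\leq\delta/M$ uniformly in $a\in\mathcal{A}_V$.

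\textbf{Step 2 (induced contraction on $\mathcal{BC}(\Omega^V)$).} Interpret $\boldsymbol{\Phi}_V(E)$ as $\overline{\bigcup_{a\in\mathcal{A}_V}\Phi^a(E)}$; boundedness of the image is automatic because $\Omega^V$ itself has diameter $1$, and closedness follows by taking closure. A one-line Hutchinson-style argument on the Hausdorff metric (sup-inf distances distribute over unions) and Step 1 give $d_{\mathcal{H}}(\boldsymbol{\Phi}_V(E),\boldsymbol{\Phi}_V(E'))\leq(1/M)\,d_{\mathcal{H}}(E,E')$. Since $(\mathcal{BC}(\Omega^V),d_{\mathcal{H}})$ is complete (as noted after Definition~\ref{cx} and Definition~\ref{omv}), Banach's fixed point theorem provides a unique attractor.

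\textbf{Step 3 (invariance of $\Omega_V^*$).} For forward invariance, fix $\boldsymbol{\omega}\in\Omega_V^*$ and $a\in\mathcal{A}_V$. For $k=1$, the subtrees $\Phi^a_v(\boldsymbol{\omega})\rfloor m = \omega_{J^a(v,m)}$ lie within the $V$-element set $\{\omega_1,\dots,\omega_V\}$; for $k\geq 2$ each subtree $\Phi^a_v(\boldsymbol{\omega})\rfloor(m*\sigma')=\omega_{J^a(v,m)}\rfloor\sigma'$ is among the $\leq V$ distinct subtrees of $\boldsymbol{\omega}$ at level $k-1$. Hence $\Phi^a(\boldsymbol{\omega})\in\Omega_V^*$. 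For reverse invariance, given $\boldsymbol{\omega}\in\Omega_V^*$, enumerate the $\leq V$ distinct level-one subtrees $\omega_v\rfloor m$ as $\omega'_1,\dots,\omega'_V$ (with repetitions), set $I(v)=\omega_v(\emptyset)$ and choose $J(v,m)$ so that $\omega'_{J(v,m)}=\omega_v\rfloor m$; then $\Phi^{(I,J)}(\boldsymbol{\omega}')=\boldsymbol{\omega}$, and $\boldsymbol{\omega}'\in\Omega_V^*$ because its subtrees at level $k$ are subtrees of $\boldsymbol{\omega}$ at level $k+1$. Thus $\boldsymbol{\Phi}_V(\Omega_V^*)=\Omega_V^*$.

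\textbf{Step 4 (closedness of $\Omega_V^*$ and conclusion).} The point I expect to be the only real subtlety is that $\Omega_V^*$ is closed in $\Omega^V$: if $\boldsymbol{\omega}^n\to\boldsymbol{\omega}$ in $\Omega^V$ and the limit had $V+1$ distinct subtrees $\omega_{v_i}\rfloor\sigma_i$ at some level $k$, these distinctions would already show up in a common finite truncation and hence in $\boldsymbol{\omega}^n$ for all large $n$, contradicting $V$-variability of $\boldsymbol{\omega}^n$. Combining closedness with Step~3 shows $\Omega_V^*\in\mathcal{BC}(\Omega^V)$ is a fixed point of $\boldsymbol{\Phi}_V$, so by uniqueness from Step 2 it is \emph{the} fixed point. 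The projection claim then follows from the preceding proposition identifying the coordinate projection of $\Omega_V^*$ with $\Omega_V$.
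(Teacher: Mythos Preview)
Your argument follows the paper's exactly in structure: Lipschitz bound $1/M$ for each $\Phi^a$, the induced Hausdorff contraction on $\mathcal{BC}(\Omega^V)$, two-sided invariance of $\Omega_V^*$ (your ``reverse invariance'' step is line-for-line the paper's, including the choice $I(v)=\omega_v(\emptyset)$ and $\omega'_{J(v,m)}=\omega_v\rfloor m$), and closedness of $\Omega_V^*$.

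The one substantive divergence is in Step~2. The theorem, via the reference to~\eqref{eq21}, asserts that the \emph{bare} union $\bigcup_{a\in\mathcal{A}_V}\Phi^a(E)$ already lands in $\mathcal{BC}(\Omega^V)$; the paper argues this directly by observing that a Cauchy sequence in the union must eventually have a fixed root-label vector $I$ (distance $<1$ forces agreement at level~$0$), after which only finitely many $J$'s remain. You instead redefine $\boldsymbol{\Phi}_V(E)$ to include a closure. Your route is cleaner and certainly suffices to identify the attractor, since in Step~3 you establish $\bigcup_a\Phi^a(\Omega_V^*)=\Omega_V^*$ \emph{without} closure; but strictly speaking you have not addressed the claim $\boldsymbol{\Phi}_V:\mathcal{BC}(\Omega^V)\to\mathcal{BC}(\Omega^V)$ as stated. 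If you want to match the theorem verbatim you should supply the paper's closedness argument for the bare union (and think carefully about whether, when $\Lambda$ is infinite, pinning down $I$ and passing to a subsequence with fixed $J$ really forces the limit back into some $\Phi^a(E)$---this step deserves more than the one line the paper gives it).
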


\begin{proof} 
It is readily checked that  each $ \Phi^a $ is a contraction map with Lipschitz constant $ 1 /M $.

We can establish directly  that $  \boldsymbol{\Phi}_V(E) := \bigcup_{a\in  \mathcal{A}_V} \Phi^a(E)$  
is closed  if $ E $ is closed, since any Cauchy sequence from 
$  \boldsymbol{\Phi}_V(E) $ eventually belongs to $  \Phi^a(E)$ for some fixed $ a $.  It follows that  $  \boldsymbol{\Phi}_V $ is a contraction map on the complete space
$ (\mathcal{B} \mathcal{C} ( \Omega^V),d_{\mathcal{H}}) $ with Lipschitz constant $ 1/M $ and so has a unique bounded closed fixed point (i.e.\ attractor).

In order to show this attractor is the set $ \Omega_V^* $ from Definition~\ref{df34}, note that   $ \Omega_V^* $  is bounded and closed in $ \Omega^V $. It is closed under  $ \Phi^a $ for any $ a \in \mathcal{A}_V $ as noted before.  Moreover, each $\boldsymbol{\omega}  \in \Omega_V^*  $ is of the form $  \Phi^a(\boldsymbol{\omega}') $ for some   $\boldsymbol{\omega}'\in \Omega_V^* $ and some (in fact many)  $ \Phi^a $.  
To see this, consider the $ VM $ tree codes of the form $ \omega_v\rfloor m $ for $ 1 \leq v \leq V $ and $ 1 \leq m \leq M $, where each $ m $ is the corresponding node of $ T $ of height one.  There are at most $ V $ distinct such tree codes, which we denote by $ \omega'_1, \dots, \omega'_V$,  possibly with repetitions.  Then from \eqref{phia} 
\[
(\omega_1,\dots, \omega_V ) = \Phi^a(\omega'_1,\dots,\omega'_V),
\]
provided
\[
I^a(v) = \omega_v(\emptyset), \quad \omega'_{J^a(v,m)} = \omega_v\lfloor m.
\]

So $\Omega_V^*  $ is invariant under  $  \boldsymbol{\Phi}_V$ and hence is the unique attractor of the IFS $  \boldsymbol{\Phi}_V$.
\end{proof}

In the previous theorem, although $ \boldsymbol{\Phi}_V$ is an IFS, neither Theorem~\ref{th1} nor the extensions in   Remark~\ref{nosep} apply directly.  
If $ \Lambda $  is not finite then $ \Omega^V  $ is neither separable nor compact.  Moreover, the map $ \boldsymbol{\Phi}_V$ acts on sets by taking infinite  unions and so   we cannot apply Theorem~\eqref{th1}.d to find a set attractor for $ \boldsymbol{\Phi}_V$, since in general the union of an infinite number of closed sets need not be closed.  

As a consequence of the theorem, approximations to $ V $-variable $ V $-tuples of tree codes, and in particular to individual $ V $-variable  tree codes,   can be built up  from a $ V $-tuple $ \boldsymbol{\tau} $ of  finite    tree codes of height 0 such as $\tau =  (\lambda^*, \dots ,  \lambda^*)$ for some $ \lambda^*\in \Lambda $,
and a finite sequence  $ a_0, a_2,\dots , a_k \in \mathcal{A} _V $, by computing the height $ k$ finite tree 
code $\Phi^{a_0}\circ\dots\circ  \Phi^{a_k} (\boldsymbol{\tau}) $.   Here we use the natural analogue of \eqref{phia} for finite tree codes.   See also the diagrams in \cite{BHS05}*{Figures 19,20}.

\subsection{Constructing Tree Codes from Addresses and Conversely}\label{sec34}

\begin{definition}[\emph{Addresses for $ V $-variable $ V $-tuples of tree codes}]\label{dfseqcode} 
For each sequence  $ \boldsymbol{a} = a_0a_1\dots a_k \dots 
$  
with  $ a_k \in \mathcal{A}_V $  and $ \Phi^{a_k} $ as in \eqref{phia}, define the corresponding $ V $-tuple  $ \boldsymbol{\omega}^{\boldsymbol{a} } $ of tree codes by
\begin{equation}\label{nseqcode}
\boldsymbol{\omega}^{\boldsymbol{a} }  =
(\omega_1^{\boldsymbol{a} } ,\dots,\omega_V^{\boldsymbol{a} } )  := \lim_{k \to \infty}  \Phi^{a_0} \circ  \Phi^{a_1} \circ  \dots \circ  \Phi^{a_{k}}(\omega^0_1,\dots,\omega^0_V), 
\end{equation}
for  any initial 
$ (\omega^0_1,\dots,\omega^0_V)\in \Omega^V$.

The sequence    $ \boldsymbol{a}  $   is called an \emph{address} for 
the $  V$-variable $ V $-tuple of tree codes $ \boldsymbol{\omega}^{\boldsymbol{a} } $.  

The set of all such addresses $ \boldsymbol{a}  $ is denoted by~   
$ \mathcal{A}_V^\infty $. 
\end{definition} 

Note that the tree code $ \Phi^{a_0} \circ  \Phi^{a_1} \circ  \dots \circ  \Phi^{a_{k }}(\omega^0_1,\dots,\omega^0_V)  $ is independent of $  (\omega^0_1,\dots,\omega^0_V)\in \Omega^V$ up to and including level $ k $, and hence agrees with $ \boldsymbol{\omega}^{\boldsymbol{a} } $ for these levels.

The sequence in \eqref{nseqcode} converges exponentially fast since   $ \Lip \Phi^{a_{k}}$ is $ \leq 1/M $.

The  map  $ \boldsymbol{a} \mapsto \boldsymbol{\omega}^{\boldsymbol{a} }:
\mathcal{A}_V^\infty \to \Omega^*_V $  is many-to-one, since the composition  of different $ \Phi^a $s may  give the same map even in simple situations as the following example
 shows. 

\begin{example}[\emph{Non uniqueness  of addresses}]  The  map  $ \boldsymbol{a} \mapsto \boldsymbol{\omega}^{\boldsymbol{a} }:
\mathcal{A}_V^\infty \to \Omega^*_V $  is many-to-one, since the composition  of different $ \Phi^a $s may  give the same map even in simple situations.  For example, suppose 
\[
M=1,\ V=2,\  F \in \Lambda,\
\Phi^a=   \begin{bmatrix}       
      F & 1 \\
     F & 1 \\
   \end{bmatrix},\
\Phi^b=   \begin{bmatrix}       
      F & 1 \\
      F & 2 \\
   \end{bmatrix}.
\]    
Since $ M=1 $  tree codes here are infinite sequences, i.e.\ 1-branching tree codes.
One readily checks from \eqref{phia} that
\[
\Phi^a(\omega,\omega') = (F*\omega, F*\omega), \ 
\Phi^b(\omega,\omega') = (F*\omega, F*\omega'),\
\]
and so
\begin{align*} 
\Phi^a\circ \Phi ^b (\omega,\omega')
&=  \Phi^a\circ \Phi ^a (\omega,\omega')
=  \Phi^b\circ \Phi ^a (\omega,\omega')
= (F*F*\omega,F*F*\omega), \\
  \Phi^b\circ \Phi ^b (\omega,\omega') &= (F*F*\omega,F*F*\omega').
\end{align*} 
\end{example}

The following definition is best understood from Example~\ref{rem17}. 
\begin{definition}[\emph{Tree skeletons}]\label{dfskel}
Given an address $ \boldsymbol{a} = a_0a_1\dots a_k \ldots 
\in \mathcal{A}_V^\infty $ and $ v \in \{1,\dots, V\} $ the corresponding \emph{tree skeleton} 
$ \widehat{J}^{\boldsymbol{a}}_v:T\to \{1,\dots,V\} $
is defined  by
\begin{equation} \label{tcc} 
\begin{gathered}
 \widehat{J}^{\boldsymbol{a}}_v(\emptyset)  = v,\quad  
 \widehat{J}^{\boldsymbol{a}}_v(m_1)  = J^{a_0}(v,m_1),\quad 
 \widehat{J}^{\boldsymbol{a}}_v(m_1m_2) = J^{a_1}( \widehat{J}^{\boldsymbol{a}}_v(m_1),m_2),\quad 
 \cdots \  ,\\
 \widehat{J}^{\boldsymbol{a}}_v(m_1\dots m_k)  = J^{a_{k-1}}( \widehat{J}^{\boldsymbol{a}}_v(m_1\dots m_{k-1}),m_{k}),\quad   \dots \  ,
\end{gathered}
\end{equation}
where the maps $ J^{a_k}(v,m) $ are as in \eqref{nIJ}.
\end{definition}

The tree skeleton depends on the maps $ J^{a_k}(w,m) $, but not on the maps $ I^{a_k}(w,m) $ and hence not on  the set  $ \{F^\lambda \}_{\lambda \in \Lambda } $ of IFSs and its indexing set $ \Lambda $.

The $V $-tuple of tree codes $ (\omega_1^{\boldsymbol{a} } ,\dots,\omega_V^{\boldsymbol{a} } ) $ can be recovered from  
the address $ \boldsymbol{a} = a_0a_1\dots a_k \ldots  $ as follows.

\begin{proposition}[\emph{Tree codes from addresses}]\label{adct}
If  $ \boldsymbol{a} = a_0a_1\dots a_k \ldots 
\in \mathcal{A}_V^\infty $ is an address, $I^{a_k} $ and $J^{a_k} $ are as in  \eqref{nIJ},    and $ \widehat{J}^{\boldsymbol{a}}_v $
is the tree skeleton corresponding to the $J^{a_k} $, then for each 
 $ \sigma \in T  $ and $ 1 \leq v \leq V $, 
\begin{equation} \label{wI}
\omega^{\boldsymbol{a}}_v(\sigma ) =I^{a_k}( \widehat{J}^{\boldsymbol{a}}_v(\sigma ) ) \text{ where }  k=|\sigma| .
\end{equation}
\end{proposition}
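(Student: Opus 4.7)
I would prove the formula by induction on $k = |\sigma|$, after first using the observation that the limit defining $\boldsymbol{\omega}^{\boldsymbol{a}}$ stabilises up to level $k$ after finitely many compositions. More precisely, since each $\Phi^{a_j}$ has Lipschitz constant $1/M$ and acts by inserting a new base node with subtrees attached, the $V$-tuple $\Phi^{a_0} \circ \Phi^{a_1} \circ \cdots \circ \Phi^{a_k}(\boldsymbol{\omega}^0)$ already agrees with $\boldsymbol{\omega}^{\boldsymbol{a}}$ at every node $\sigma \in T$ with $|\sigma| \leq k$; this is the remark following Definition~\ref{dfseqcode}. So it suffices to compute the value at $\sigma$ of the finite composition for any $k \geq |\sigma|$, and in particular for $k = |\sigma|$.

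For the base case $k = 0$, so $\sigma = \emptyset$, formula \eqref{phia} gives directly that the value at $\emptyset$ of the $v$-th component of $\Phi^{a_0}(\cdot)$ is $I^{a_0}(v)$, regardless of the argument. Since $\widehat{J}^{\boldsymbol{a}}_v(\emptyset) = v$ by \eqref{tcc}, this matches the claim $\omega^{\boldsymbol{a}}_v(\emptyset) = I^{a_0}(v) = I^{a_0}(\widehat{J}^{\boldsymbol{a}}_v(\emptyset))$.

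For the inductive step, suppose the formula holds for all shorter node sequences and all addresses. Write $\sigma = m_1 * \sigma'$ with $|\sigma'| = k-1$. By \eqref{phia}, the $v$-th coordinate of $\Phi^{a_0}(\boldsymbol{\omega})$ is $I^{a_0}(v) * (\omega_{J^{a_0}(v,1)}, \dots, \omega_{J^{a_0}(v,M)})$, so by definition of the concatenation $*$ in Notation~\ref{concat}, the value at the node $m_1 * \sigma'$ equals the value at $\sigma'$ of the $J^{a_0}(v,m_1)$-th coordinate of $\boldsymbol{\omega}$. Applying this with $\boldsymbol{\omega} = \Phi^{a_1} \circ \cdots \circ \Phi^{a_k}(\boldsymbol{\omega}^0)$, and noting this latter $V$-tuple is itself the height-$(k-1)$ truncation of $\boldsymbol{\omega}^{\boldsymbol{a}'}$ for the shifted address $\boldsymbol{a}' := a_1 a_2 \dots$, we get
\begin{equation*}
\omega^{\boldsymbol{a}}_v(\sigma) \;=\; \omega^{\boldsymbol{a}'}_{v'}(\sigma'), \qquad v' := J^{a_0}(v, m_1).
\end{equation*}
By the inductive hypothesis applied to $\boldsymbol{a}'$, $v'$ and $\sigma'$, this equals $I^{a'_{k-1}}\bigl(\widehat{J}^{\boldsymbol{a}'}_{v'}(\sigma')\bigr) = I^{a_k}\bigl(\widehat{J}^{\boldsymbol{a}'}_{v'}(\sigma')\bigr)$.

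The last step is the bookkeeping identity $\widehat{J}^{\boldsymbol{a}'}_{v'}(\sigma') = \widehat{J}^{\boldsymbol{a}}_v(\sigma)$, which is immediate from the recursive definition \eqref{tcc}: starting at $v$ and taking the $J^{a_0}(v, m_1)$-step brings us to $v'$, after which the remaining $J^{a_j}$-steps for $j \geq 1$ through the nodes of $\sigma'$ match those for $\boldsymbol{a}'$ through $\sigma'$. Substituting gives $\omega^{\boldsymbol{a}}_v(\sigma) = I^{a_k}(\widehat{J}^{\boldsymbol{a}}_v(\sigma))$, completing the induction. I expect no real obstacle here beyond careful tracking of the index shift between $\boldsymbol{a}$ and $\boldsymbol{a}'$; the whole content is that \eqref{tcc} was tailored precisely so this shift works out.
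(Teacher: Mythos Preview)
Your proof is correct and is precisely the induction the paper has in mind: the paper's own proof merely states that ``the proof is implicit in Example~\ref{rem17}'' and that ``a formal proof can be given by induction,'' without writing out the details. Your argument supplies exactly those details, with the shift to the address $\boldsymbol{a}' = a_1 a_2 \dots$ being the natural way to run the induction.
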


\begin{proof} The proof is implicit in Example~\ref{rem17}.  A formal proof can be given by induction.
\end{proof}

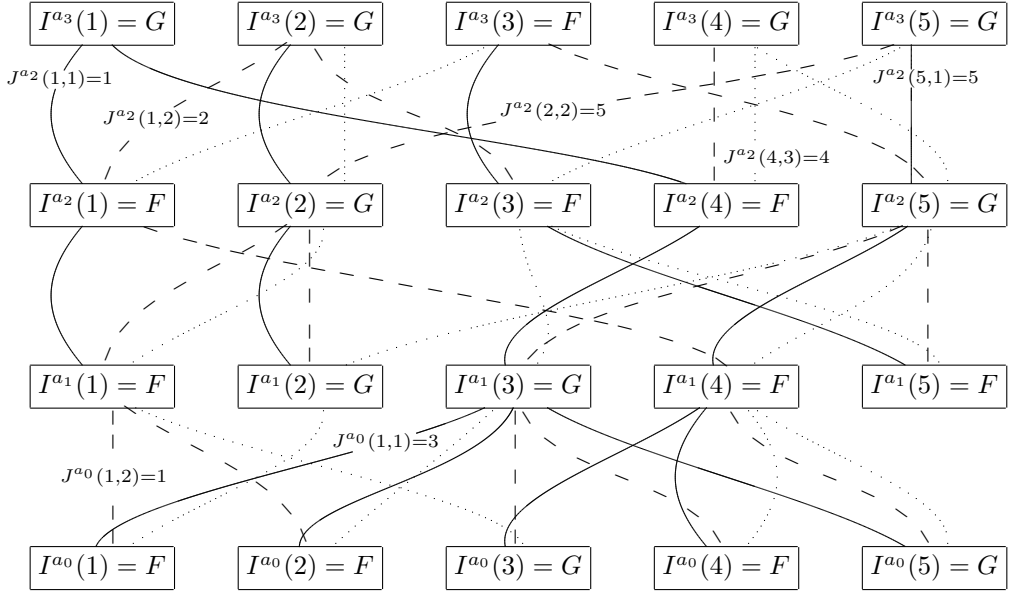
\begin{figure}[ptbh] 
   \centering
\begin{equation*}
\xymatrix@R+1cm{
*+[F]{ I^{a_3}(1)=G}   
  & *+[F]{ I^{a_3}(2)=G} 
  & *+[F]{I^{a_3}(3)=F} 
  & *+[F]{I^{a_3}(4)=G} 
  & *+[F]{I^{a_3}(5)=G}            \\
*+[F]{ I^{a_2}(1)=F  }        \ar@(ul,dl) @{-} [u] |(.7){J^{a_2}(1,1)=1}    
                                        \ar@(u,dl) @{--} [ur] |(.4){J^{a_2}(1,2)=2}      
                                        \ar@(ur,dl) @{.} [urr]              
  &  *+[F]{ I^{a_2}(2)=G}   \ar@(ul,dl) @{-} [u]     \ar@(u,dl) @{--} [urrr]   |(.45){J^{a_2}(2,2)=5}  
                                        \ar@(u,d) @<-3ex> @{.} [u]         
  &  *+[F]{ I^{a_2}(3)=F}   \ar@(ul,dl) @{-} [u]     \ar@(u,d) @{--} [ul]       \ar@(ur,dl)  @{.} [urr]                                      
  &  *+[F]{ I^{a_2}(4)=F}   \ar@(ul,d) @{-} [ulll]   \ar@(u,d) @<1ex> @{--} [u]       
                                        \ar @(u,d) @<-2.5ex> @{.} [u]  |(.2){\mbox{\phantom{xxx}}J^{a_2}(4,3)=4}                                 
  &  *+[F]{ I^{a_2}(5)=G}   \ar@(u,d) @<2ex> @{-} [u]   |(.75){\mbox{\phantom{xx}}J^{a_2}(5,1)=5}      
                                        \ar@(u,dr) @{--} [ull]       
                                         \ar @(ur,dr) @{.} [ul]   \\                    
*+[F]{ I^{a_1}(1) =F }        \ar@(ul,dl) @{-} [u]     \ar@(u,dl) @{--} [ur]        \ar@(ur,dr) @{.} [ur]                
  &  *+[F]{ I^{a_1}(2)=G}   \ar@(ul,dl) @{-} [u]     \ar@(u,d) @{--} [u]    \ar@(ur,dl)  @{.} [urrr]         
  &  *+[F]{ I^{a_1}(3)=G}   \ar@(ul,dl) @{-} [ur]     \ar@(u,dl) @{--} [urr]       \ar@(ur,d)  @{.} [u]                                      
  &  *+[F]{ I^{a_1}(4)=F}   \ar@(ul,dl) @{-} [ur]   \ar@(ur,dr) @{--} [ulll]        \ar @(ur,d)  @{.} [ur]                                    
  &  *+[F]{ I^{a_1}(5)=F}   \ar@(ul,dr)  @{-} [ull]     \ar@(u,d) @<.5 ex>  @{--} [u]    
                                        \ar @(ur,dr) @{.} [ull]    \\                    
*+[F]{ I^{a_0}(1)=F  }        \ar@(ul,dl) @{-} [urr]   |(.7){J^{a_0}(1,1)=3}   
                                        \ar@(u,d) @<-1ex> @{--} [u]    |(.5){J^{a_0}(1,2)=1}   
                                        \ar@(ur,dr) @{.} [ur]                
  &  *+[F]{ I^{a_0}(2)=F}   \ar@(ul,d) @{-} [ur]     \ar@(u,dr) @{--} [ul]          \ar@(ur,dl)  @{.}  [ur]         
  &  *+[F]{ I^{a_0}(3)=G}   \ar@(ul,dl) @{-} [ur]     \ar@(u,d) @<.2 ex> @{--} [u]       
                                        \ar@(ur,dr)  @{.} [ull]                                      
  &  *+[F]{ I^{a_0}(4)=F}   \ar@(ul,dl) @{-} [u]   \ar@(u,d) @{--} [ul]        \ar @(ur,dr)  @{.} [u]                                    
  &  *+[F]{ I^{a_0}(5)=G}   \ar@(ul,dr)  @{-} [ull]       \ar@(u,d) @{--} [ul]       
                                        \ar @(ur,dr) @{.} [ul]    \\                    
}
\end{equation*}
   \caption{Constructing tree codes  from an address.}
     \label{fig:buffer}
\end{figure}

\begin{example}[\emph{The Espalier\footnote{espalier [verb]: to train a fruit tree or ornamental shrub to grow flat against a wall, supported on a lattice.} technique}]\label{rem17}
We use this to find tree codes from addresses and addresses from tree codes.

We  first show how to represent an  address $ \boldsymbol{a}= a_0a_1\dots a_k \ldots  \in \mathcal{A}_V^\infty $ by means of a diagram as in Figure~\ref{fig:buffer}.    
From this we  construct the $ V $-variable $ V $-tuple   of tree codes  $ (\omega_1,\dots,\omega_V) \in \Omega^*_V $ with address $ \boldsymbol{a} $.  

Conversely, given a $ V $-variable $ V $-tuple of  tree codes $(\omega_1,\dots,\omega_V) \in  \Omega_V^* $ we show how to find the set of all  its possible  addresses.  
Moreover, given a single tree code $ \omega  \in \Omega_V $ we find all possible $(\omega_1,\dots,\omega_V) \in  \Omega_V^* $ with $ \omega_1 =\omega $  and all possible addresses in this case.

\smallskip
For the example here let  $ \Lambda = \{F,G\}  $ where $ F $ and $ G $ are symbols.  Suppose  $ M = 3 $ and $ V = 5 $.

Suppose    $ \boldsymbol{a} = a_0a_1\dots a_k \ldots \in \mathcal{A}_V^\infty $ is an address of $ (\omega_1,\dots,\omega_V) \in \Omega_V^* $, where
\begin{equation} \label{} 
a_0 = \begin{bmatrix} F & 3 & 1 &2 \\
    F & 3 & 1 &3 \\
    G & 4 & 3 &1 \\  
     F & 4 & 3 &4 \\
    G & 3 & 4 &4    
         \end{bmatrix},\
a_1 = \begin{bmatrix} F & 1 & 2 &2 \\
    G & 2 & 2 &5 \\
    G & 4 & 5 &3 \\  
     F & 5 & 1 &5 \\
    F & 3 & 5 &3   
         \end{bmatrix},\
a_2 = \begin{bmatrix} F & 1 & 2 &3 \\
    G & 2 & 5 &2 \\
    F & 3 & 2 &5 \\  
     F & 1 & 4 &4 \\
    G & 5 & 3 &4   
         \end{bmatrix},\
a_3 = \begin{bmatrix} G & * & * &* \\
   G & * & * &* \\
    F & * & * &* \\  
     G & * & * &* \\
    G & * & * &* \\    
         \end{bmatrix}.
                \end{equation} 
We will see that up to level 2 the tree codes $ \omega_1 $ and $ \omega_2 $ are those shown in Figure~\ref{fig:example}.  Although $ \omega_1 $ and $ \omega_2 $  are 3-variable and 2-variable respectively up to level 2,  it will follow from  Figure~\ref{fig:buffer} that they are  5-variable up to level~3 and are not 4-variable.                 
                 
The diagram  in Figure~\ref{fig:buffer} is obtained from    $ \boldsymbol{a} = a_0a_1a_2a_3\dots $   
by espaliering $ V $ copies of the tree $ T $ in Definition~\ref{dftrees}  up through an infinite lattice of $ V $ boxes at each level  $ 0,1,2,3, \dots $\,. One tree grows out of each box at level 0, and   one element  from $ \Lambda $ is assigned to each  box at each level.  When two or more branches pass through the same box from below they inosculate, i.e.\  their sub branches merge and are indistinguishable from that point upwards.
More precisely,   $ \boldsymbol{a}   $ determines the diagram in the following manner.   For each level $ k $ and starting from each box $ v $ at that level,                 
a branch $ \xymatrix@1@C=1.0cm{{}  \ar @{-}     [r]  & {} }$   terminates in  box number $  {J^{a_k}(v,1)}$ at level $ k+ 1 $, a   branch  $ \xymatrix@1@C=1.0cm{{}  \ar @{--}     [r]  & {} }$  terminates in box $  {J^{a_k}(v,2)}$ at level $ k+ 1 $ and a  branch $ \xymatrix@1@C=1.0cm{{}  \ar @{.}     [r]  & {} }$   terminates in  box $  {J^{a_k}(v,3)}$ at level $ k+ 1 $.  The element   $I^{a_k}(v) \in \Lambda $  is assigned to box $ v $ at level~$ k $.

Conversely, any such diagram determines a unique address $ \boldsymbol{a} = a_0a_1a_2a_3\dots\, $.
More precisely, consider an infinite lattice of $ V $ boxes at each level  $ 0,1,2,3, \dots $\,.  Suppose at each level $ k $ there is either $ F $ or $ G $ in each of the $ V $ boxes, and from each box there are 3 branches  $ \xymatrix@1@C=1.0cm{{}  \ar @{-}     [r]  & {} }$, $ \xymatrix@1@C=1.0cm{{}  \ar @{--}     [r]  & {} }$  and
$ \xymatrix@1@C=1.0cm{{}  \ar @{.}     [r]  & {} }$ , each branch terminating in a box  at level $ k+1$.   From this information one can read off $ I^{a_k}(v) $ and  $ J^{a_k}(v,m) $ for each $ k \geq 0 $, $ 1\leq v \leq V $ and $ 1 \leq m \leq M $, and hence determine  $ \boldsymbol{a}  $.

\smallskip
The diagram, and hence the address $ \boldsymbol{a} $, determines  the 
tree skeleton $ \widehat{J}^{\boldsymbol{a}}_v $ by assigning to each node of the copy of 
$ T $ growing out of box $ v $ at level $ 0 $,  the number of the particular box in which that node sits.
 If $ \boldsymbol{\omega}^{\boldsymbol{a} } = (\omega_1,\dots, \omega_V) $ is the $ V $-tuple of tree codes with address $ \boldsymbol{a} $ then the tree code $ \omega_v$ is obtained  by assigning to each node of the copy of $ T $ growing out of box $ v $ at level $ 0 $  the  element (fruit?) from $ \Lambda $ in the particular box in which that node sits.

\smallskip  
Conversely, suppose $ \omega \in \Omega_V  $ is a single  $ V $-variable tree code.  Then the set of all 
possible $ \boldsymbol{\omega} \in \Omega_V^* \subset \Omega^V $ of the form $ \boldsymbol{\omega} = (\omega_1, \omega_2, \dots , \omega_V ) $ with $ \omega_1 = \omega $, and the set of all diagrams and corresponding addresses $ \boldsymbol{a} \in \mathcal{A}_V^\infty $ for such $ \boldsymbol{\omega}$, is found as follows.

Espalier a copy of $T $ up through the infinite lattice with $ V $  boxes at each level  in such a way that if $ \sigma_0\dots \sigma_k $ and $  \sigma'_0\dots \sigma'_k $ sit in the same box at level $ k $ then the sub tree codes $ \omega\lfloor \sigma_0\dots \sigma_k $ and $ \omega\lfloor \sigma'_0\dots \sigma'_k $ are equal. Since $\omega $ is $ V $-variable, this  is always possible.  From level $ k $ onwards the two sub trees 
are fused together.

The possible  diagrams corresponding to this espaliered $ T $ are constructed as follows. For each $ \sigma \in T $ the element $ \omega(\sigma) \in \Lambda $ is assigned to the  box containing $ \sigma $.  By construction, this is the same element for any two $ \sigma $'s in the same box.  The three branches of the diagram from this box up to the next level are   given by the three sub branches of the espaliered $ T $ growing out of that box.  If $ T $ does not pass through some box, then the $ F $ or $ G $ in that box, and the three branches of the diagram from that box to the next level up, can be assigned arbitrarily. 

In this manner one obtains all possible diagrams for which the  tree growing out of box 1 at level 0 is $ \omega $.  Each diagram gives an address $ \boldsymbol{a} \in \mathcal{A}_V^\infty $ as before, and the corresponding   $ \boldsymbol{\omega}^{\boldsymbol{a}} = (\omega_1,\dots,\omega_V ) \in \Omega_V^* $ with address $ \boldsymbol{a}$ satisfies $ \omega_1 = \omega $.

In a similar manner, the set of possible  diagrams and corresponding addresses  can be obtained  for any   $ \boldsymbol{\omega} =  (\omega_1, \omega_2, \dots , \omega_V ) \in \Omega_V^* \subset \Omega^V $.
 \end{example}

\subsection{The Probability Distribution on $ V $-Variable  Tree Codes}\label{secpdtc}
Corresponding to the probability distribution $ P $  on $  \Lambda $ in~\eqref{dfF2} there are  natural probability distributions  $ \rho_V $ on $ \Omega_V $, $ \mathfrak{K}_V $ on $ \mathcal{K}_V $ and $ \mathfrak{M}_V $ on  $ \mathcal{M}_V $.  See Definition~\ref{Vvar} for notation  and for the following also note Definition~\ref{df34}.

\begin{definition}[\emph{Probability distributions on addresses and tree codes}]\label{df38}
The probability distribution 
$ P_V $ on $ \mathcal{A}_ V $, with notation as in \eqref{nIJ}, is defined by selecting $ a = (I,J) \in \mathcal{A}_V $ so that  $ I(1),\dots, I(V) \in \Lambda $ are iid with distribution $  P $, so that $J(1,1),\dots, J(V,M) \in  \{1,\dots, V\} $ are iid with the uniform distribution $ \{V^{-1},\dots , V^{-1} \} $, and so the $ I(v)  $ and $ J(w,m) $  are  independent of one another.

The probability distribution $ P_V^\infty $ on  $ \mathcal{A}^\infty_V $, the set  of addresses $ \boldsymbol{a} = a_0 a_1  \ldots  $, is defined by choosing the $ a_k $ to be iid with distribution $ P_V $.  

The probability distribution $ \rho_V^* $ on  $ \Omega_V^* $ is the image of $ P_V^\infty $ under the map $ \boldsymbol{a} \mapsto \boldsymbol{\omega}^{\boldsymbol{a}}$ in \eqref{nseqcode}.  
 
 The probability distribution $ \rho_V $ on $ \Omega_V $ is    the projection of $ \rho_V^* $ in any of the $ V $ coordinate directions. (By symmetry of the construction this is independent of choice of direction.) 
 \end{definition}
 
One obtains natural probability distributions on fractals sets and measures, and on $ V $-tuples of fractal sets and measures as follows. 
 \begin{definition}[\emph{Probability distributions on $ V $-variable fractals}] \label{df311}
 Suppose  $ (F^\lambda)_{\lambda \in \Lambda} $ is a uniformly contractive family of IFSs.  
 
 The probability distributions $ \mathfrak{K}^*_V $ and $ \mathfrak{K}_V $ on $ \mathcal{K}^*_V  $ and $\mathcal{K}_V  $ respectively are those induced from  $ \rho_V^* $  and $  \rho_V $ by the maps $ (\omega_1,\dots,\omega_V)\mapsto (K^{\omega_1},\dots, K^{\omega_V} ) $ and $ \omega \mapsto K^\omega $ in Definitions~\ref{df34} and~\ref{Vvar}.
 
Similarly, the  probability distributions $ \mathfrak{M}^*_V $ and $ \mathfrak{M}_V $  on $ \mathcal{M}^*_V  $ and $\mathcal{M}_V  $ respectively are those induced from $ \rho_V^* $ and $ \rho_V$ by the maps $ (\omega_1,\dots,\omega_V) \mapsto (\mu^{\omega_1},\dots, \mu^{\omega_V} ) $ and $ \omega \mapsto \mu^\omega $.
  \end{definition}
  
  That is, $ \mathfrak{K}^*_V $, $ \mathfrak{K}_V $, $ \mathfrak{M}^*_V $ and $ \mathfrak{M}_V $ are the probability distributions of the random objects $ (K^{\omega_1}, \dots, K^{\omega_V}) $, $ K^\omega$, $ (\mu^{\omega_1}, \dots, \mu^{\omega_V}) $ and $  \mu^\omega$ respectively, under the probability distributions $ \rho^*_V $ and $ \rho_V $ on $ (\omega_1,\dots,\omega_V ) $ and  $ \omega $.
Since  the projection of $ \rho_V^* $ in each   coordinate direction  is   $ \rho_V $ it follows that the  projection of $ \mathfrak{K}^*_V  $ in each coordinate direction is $ \mathfrak{K}_V $
and the  projection of $ \mathfrak{M}^*_V  $ in each coordinate direction is $ \mathfrak{M}_V $.
However,   there is a high degree of  dependence between the components and in general $  \rho_V^* \neq {\rho_V}^V$, $ \mathfrak{K}^*_V \neq  {\mathfrak{K}_V}^V $
and $ \mathfrak{M}^*_V \neq  {\mathfrak{M}_V}^V $.

  \begin{definition}[(\emph{The IFS acting on the set of $  V $-tuples of tree codes})] \label{dfIFStrp}
The IFS $ \boldsymbol{\Phi}_V $ in \eqref{trifs} is extended to an IFS  with probabilities by  
\begin{equation} \label{} 
 \boldsymbol{\Phi}_V:=(\Omega^V, \Phi^a, a \in \mathcal{A}_V, P_V ) .
\end{equation} 
\end{definition}

\begin{theorem}\label{macs}
A unique measure attractor  exists for  $ \boldsymbol{\Phi}_V $  and equals  $ \rho_V^* $.  In particular, the projection of  $ \rho_V^* $ in any coordinate direction is~$ \rho_V $.
\end{theorem}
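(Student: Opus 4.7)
The strategy is to reduce Theorem~\ref{macs} to an application of Theorem~\ref{th1}.d for the IFS $\boldsymbol{\Phi}_V = (\Omega^V, \Phi^a, a \in \mathcal{A}_V, P_V)$. By Theorem~\ref{char2} each $\Phi^a$ has Lipschitz constant $1/M$, so the uniform contraction condition~\eqref{equcb} holds with $r = 1/M$, and uniform boundedness is immediate since $(\Omega^V, d)$ is a bounded metric space (Definition~\ref{omv} and the following comment). When $\Lambda$ is infinite, $(\Omega^V, d)$ need not be separable, but Remark~\ref{nosep} records that part (\textbf{d}) of Theorem~\ref{th1} remains valid in the non-separable setting once one restricts to measures $\mu$ satisfying $\mu(\Omega^V \setminus \operatorname{spt} \mu) = 0$; the measure that will be produced here is the pushforward of a Borel measure by a single map and so is automatically concentrated on a separable subset.

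The key identification is that the abstract address map $\widetilde{\Pi}: \mathcal{A}_V^\infty \to \Omega^V$ furnished by Theorem~\ref{th1}.a for the IFS $\boldsymbol{\Phi}_V$ coincides with the map $\boldsymbol{a} \mapsto \boldsymbol{\omega}^{\boldsymbol{a}}$ of Definition~\ref{dfseqcode}. Both are defined by
\[
\boldsymbol{a} \mapsto \lim_{k \to \infty} \Phi^{a_0} \circ \Phi^{a_1} \circ \cdots \circ \Phi^{a_{k}}(\boldsymbol{\omega}^0),
\]
with the limit existing and being independent of the starting point $\boldsymbol{\omega}^0 \in \Omega^V$ by Theorem~\ref{th1}.d; in particular $\widetilde{\Pi}(\boldsymbol{a}) = \boldsymbol{\omega}^{\boldsymbol{a}}$.

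Theorem~\ref{th1}.d then gives a unique invariant probability measure (measure attractor) $\mu$ for $\boldsymbol{\Phi}_V$, and produces it as the pushforward $\mu = \widetilde{\Pi}(P_V^\infty)$ of the iid code-space measure $P_V^\infty$ by the address map. But by Definition~\ref{df38}, $\rho_V^*$ is \emph{defined} to be exactly this pushforward of $P_V^\infty$ under $\boldsymbol{a} \mapsto \boldsymbol{\omega}^{\boldsymbol{a}}$. Hence $\rho_V^* = \mu$ is the unique measure attractor of $\boldsymbol{\Phi}_V$. The final assertion that the projection of $\rho_V^*$ in any coordinate direction equals $\rho_V$ is then immediate from Definition~\ref{df38}, where $\rho_V$ was itself defined as such a marginal.

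There is no substantive obstacle beyond bookkeeping: the only mildly delicate point is the possible non-separability of $\Omega^V$ when $\Lambda$ is infinite, and this is precisely the situation covered by the extensions in Remark~\ref{nosep}, since we are in the uniformly contractive regime and the candidate invariant measure is manifestly a pushforward by a single Borel map.
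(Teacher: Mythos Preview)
Your proof is correct and follows essentially the same route as the paper. Both arguments apply the uniform-contraction machinery (Theorem~\ref{th1}.d via Remark~\ref{nosep}) to $\boldsymbol{\Phi}_V$ and identify the attractor as the pushforward of $P_V^\infty$ under the address map $\boldsymbol{a}\mapsto\boldsymbol{\omega}^{\boldsymbol{a}}$; the only cosmetic difference is that the paper introduces the auxiliary shift IFS $(\mathcal{A}_V^\infty, R^a, a\in\mathcal{A}_V, P_V)$ with $R^a(\boldsymbol{a})=a*\boldsymbol{a}$ and uses the intertwining relation $\Pi\circ R^a=\Phi^a\circ\Pi$ to transport its attractor $P_V^\infty$ forward, whereas you invoke the formula $\mu=\Pi(W)$ from Theorem~\ref{th1} directly.
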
 

\begin{proof}
For $ a \in \mathcal{A}_V $   let $ R^a:\mathcal{A}_V^\infty \to \mathcal{A}_V^\infty  $ denote the  operator  $ \boldsymbol{a}\mapsto a*\boldsymbol{a} $.  Then 
$ (\mathcal{A}_V^\infty, R^a, a\in \mathcal{A}_V, P_V) $ is an IFS and the $ R^a  $ are contractive with Lipschitz constant $ 1/2 $ under  the metric $ d(\boldsymbol{a}, \boldsymbol{b}) = 2^{-k} $, where $ k  $ is the least integer such that $ a_k \neq b_k $.  Thus this IFS has a unique attractor   which from  Definition~\ref{df38} is $ P_V^\infty$.

Since each $ \Phi^a :\Omega^V \to \Omega^V$ has Lipschitz constant $ 1/M  $ from Theorem~\ref{char2},  it follows that $ \Phi ^a $  has Lipschitz constant $ 1/M  $ in the strong Prokhorov (and Monge-Kantorovitch) metric as a map on measures.  It also follows that $ \boldsymbol{\Phi}_V $ has a unique attractor from Theorem~\ref{char2} and Remark~\ref{nosep}.

Finally, if $ \Pi $ is the projection $ \boldsymbol{a} \mapsto \boldsymbol{\omega}^{\boldsymbol{a}}
: \mathcal{A}_V^\infty \to \Omega^V $  in \eqref{nseqcode}, it is immediate that $\Pi\circ R^a = \Phi^a \circ \Pi  $  and hence the attractor of $ \boldsymbol{\Phi}_V  $ is $ \Pi(P_V^\infty) =  \rho_V^*  $ by Definition~\ref{df38}.

The projection of  $ \rho_V^* $ in any coordinate direction is~$ \rho_V $ from Definition~\ref{df38}.
\end{proof}

\begin{remark}[\emph{Connection with other types of random fractals}]\label{compare}
The probability distribution $ \rho_V $ on $ \Omega_V $ is  obtained by projection from the probability distribution $ P^\infty_V $ on $ \mathcal{A}_V^\infty $, which  is constructed in a simple iid manner.  However, because of the combinatorial nature of the many-to-one map 
\[
 \boldsymbol{a} \mapsto \boldsymbol{\omega}^{\boldsymbol{a}}\mapsto \omega_1^{\boldsymbol{a}}
 :\mathcal{A}_V^\infty \to \Omega_V^*\to \Omega_V \text{ in \eqref{nseqcode},  inducing }
 P_V^\infty \to \rho_V^* \to \rho_V, 
 \]
the distribution    $ \rho_V $ is very difficult to analyse in terms of tree codes.  In particular, under the distribution $ \rho_V $ on $ \Omega_V $ and hence on $ \Omega$,  the set of random IFSs  $\omega \mapsto  F^{\omega(\sigma) }$ for  $ \sigma \in T$ has a complicated long range dependence structure.  (See the comments following Definition~\ref{dftrees}  for the notation $ F^{\omega(\sigma) }$.)

For each $ V $-variable tree code there are at most $ V $ isomorphism classes of subtree codes at each level, but the isomorphism classes are level dependent.  

Moreover, each set  of realisations of a $ V $-variable random fractal, as well as its associated probability distribution, is the projection of the fractal attractor of a single \emph{deterministic} IFS operating on $ V $-tuples of sets or measures.  See Theorems~\ref{th41} and~\ref{th42}.

For these reasons  $ \rho_V $,  $ \mathfrak{K}_V $ and  $ \mathfrak{M}_V $ are very different from other notions of a random fractal distribution in the literature.  See also Remark~\ref{gdf}.
\end{remark}

\section{Convergence and Existence Results for SuperIFSs}\label{seccgvvf}\label{secavcon}

We continue with the assumption that  $ \boldsymbol{F} = \{X, F^\lambda, \lambda  \in\Lambda, P\}   $ is a family of IFSs as in \eqref{dfF1} and~\eqref{dfF2}.

The   set $ \mathcal{K}_V $ of $ V $-variable fractal sets and the set $ \mathcal{M}_V $ of $ V $-variable fractal measures from Definition~\ref{Vvar}, together with their natural  probability distribution $ \mathfrak{K}_ V $ and $ \mathfrak{M}_V $ from Definition~\ref{df311}, are obtained as the attractors of IFSs $ \mathfrak{F}_V^{\mathcal{C}} $, $ \mathfrak{F}_V^{\mathcal{M}_c} $ or
$ \mathfrak{F}_V^{\mathcal{M}_1} $  under suitable conditions, see Theorems~\ref{th41} and \ref{th42}.  The Markov chains corresponding to these IFSs provide   MCMC  algorithms, such as the ``chaos game'',  for generating samples  of  $V$-variable fractal sets and $V$-variable fractal measures  whose empirical distributions converge to the stationary distributions  $ \mathfrak{K}_V  $ and $ \mathfrak{M}_V  $ respectively.

\begin{definition}\label{dfextmet}
The metrics $ d_{\mathcal{H}}$, $ d_P $  and  $ d_{MK} $ are defined  on $ \mathcal{C} (X)^V $, $ \mathcal{M}_c(X)^V $ and  $ \mathcal{M}_1(X)^V $ by 
\begin{equation}\label{mets}
\begin{aligned}
d_{\mathcal{H}}\left( (K_1,\dots, K_V) , (K'_1,\dots, K'_V) \right)&=
         \max_v  d_{\mathcal{H} }(K_v,K'_v),\\
d_{P}\left( (\mu_1,\dots, \mu_V) , (\mu'_1,\dots, \mu'_V) \right)&=
         \max_v  d_{P}(\mu_v,\mu'_v),\\
d_{MK}\left( (\mu_1,\dots, \mu_V) , (\mu'_1,\dots, \mu'_V) \right)&=
      V^{-1} \sum_v  d_{MK}(\mu_v,\mu'_v),         
                  \end{aligned}
\end{equation}
where the metrics on the right are as in Section~\ref{xd}.
\end{definition}
 
 The  metrics $ d_{\mathcal{H}}$ and $ d_{MK} $ are complete and separable, while the metric $ d_P $ is complete but usually not separable. See Definitions~\ref{cx},  \ref{mx2} and \ref{mp}, the comments which follow them, Proposition~\ref{prop25} and Remark~\ref{nsp}. 
 The metric  $ d_{MK} $ is usually not locally compact, see Remark~\ref{rmloc}.
 
The following IFSs  \eqref{eq44} are  analogues of the tree IFS $\boldsymbol{\Phi}_V:=(\Omega^V, \Phi^a, a \in \mathcal{A}_V ) $ in~\eqref{trifs}.

\begin{definition}[\emph{SuperIFS}]\label{df42}
For $ a \in \mathcal{A}_V $ as in  Definition~\ref{dfIFStr}    let 
\[
\mathcal{F}^a :  \mathcal{C} (X)^V \to  \mathcal{C} (X)^V, \quad  
 \mathcal{F}^a :  \mathcal{M}_c (X)^V \to  \mathcal{M}_c (X)^V,\quad 
 \mathcal{F}^a :  \mathcal{M}_1 (X)^V \to  \mathcal{M}_1 (X)^V,
 \]
be given by
\begin{equation}\label{fcom}
\begin{aligned} 
\mathcal{F}^a (K_1,\dots,K_V)  
&=  \left( F^{I^a(1)}\bigl(K_{J^a(1,1)},\dots,K_{J^a(1,M)}\bigr), \dots ,
F^{I^a(V)}\big(K_{J^a(V,1)},\dots,K_{J^a(V,M)}\big)\right),\\
\mathcal{F}^a (\mu_1,\dots,\mu_V)  
&=  \left( F^{I^a(1)}\bigl(\mu_{J^a(1,1)},\dots,\mu_{J^a(1,M)}\bigr), \dots ,
F^{I^a(V)}\big(\mu_{J^a(V,1)},\dots,\mu_{J^a(V,M)}\big)\right),
\end{aligned} 
\end{equation} 
where  the action of $ F^{I^a(v)}  $ is defined in \eqref{eq21a}.

Let 
\begin{equation} \label{eq44}
\begin{aligned} 
 \mathfrak{F}_V^{\mathcal{C}} 
 &= \left(\mathcal{C} (X)^V,\mathcal{F}^a, a \in \mathcal{A}_V , P_V\right),  \\
\mathfrak{F}_V^{\mathcal{M}_c} 
&= \left(\mathcal{M}_c (X)^V,\mathcal{F}^a, a \in \mathcal{A}_V , P_V\right)  , \\
\mathfrak{F}_V^{\mathcal{M}_1} 
&= \left(\mathcal{M}_1 (X)^V,\mathcal{F}^a, a \in \mathcal{A}_V , P_V\right)  ,  
\end{aligned}  
\end{equation} 
be the corresponding IFSs, with  $ P_V $ from Definition~\ref{df38}. 
These IFSs  are called   \emph{superIFSs}.
\end{definition}

Two types of conditions will be used on families of IFSs.  The first was introduced in~\eqref{equc}.

\begin{definition}[\emph{Contractivity conditions for a family of IFSs}]\label{auc}
The family      $\boldsymbol{F}= \{X, F^\lambda, \lambda  \in\Lambda, P\}   $ 
  of IFSs 
is \emph{uniformly contractive}  and  \emph{uniformly bounded} if for some   $0\leq  r<1$,
\begin{equation} \label{equc1} 
\sup\nolimits_\lambda \max\nolimits_{m} d(f_m^\lambda(x), f_m^\lambda(y)) \leq r \,d(x,y) 
\quad \text{and} \quad  \sup\nolimits_\lambda \max\nolimits_m  d(f^\lambda_m(a),a) <\infty
\end{equation} 
for all $ x,y\in X  $ and some $ a\in X $. 
(The probability distribution $ P  $ is not used in~\eqref{equc1}.  
The second condition is immediate if $ \Lambda $  is finite.)  

The family  $ \boldsymbol{F} $ 
is \emph{pointwise average contractive} and \emph{average bounded} 
if for some $ 0\leq r<1 $ 
\begin{equation} \label{eqac}
\expected_\lambda \expected_m 
       d(f^\lambda_m(x), f^\lambda_m(y)) \leq r d(x,y) 
    \quad \text{and} \quad
  \expected_\lambda \expected_m d(f^\lambda_m(a),a ) <\infty 
\end{equation} 
for all $ x,y \in X $  and some  $ a\in X $.
\end{definition}


The following theorem includes and strengthens   Theorems 15--24   from \cite{BHS05}.  
The space $ X $ may be noncompact and  the strong Prokhorov metric $ d_P $ is used rather than the Monge-Kantorovitch metric $ d_{MK}  $.  

\begin{theorem}[\emph{Uniformly contractive conditions}]  \label{th41} Let $ \boldsymbol{F} = \{X, F^\lambda, \lambda  \in\Lambda, P\}   $ 
be a \emph{finite} family of IFSs  on a complete separable metric space $ (  X,d)  $ satisfying~\eqref{equc1}.  

Then the superIFSs $  {\mathfrak{F}}_V^{\mathcal{C}} $ and $  {\mathfrak{F}}_V^{\mathcal{M}_c} $
satisfy the uniform contractive  condition  $ \Lip \mathcal{F}^a \leq r $.
Since $ (\mathcal{C} (X)^V, d_{\mathcal{H}}) $ is complete and separable,  and  $ (\mathcal{M}_c (X)^V, d_{P}) $ is complete but not necessarily separable,
  the corresponding conclusions of Theorem~\ref{th1} and Remark~\ref{nosep} are valid.

In particular
\begin{enumerate}

\item $ \mathfrak{F}_V^{\mathcal{C}} $ and $ \mathfrak{F}_V^{\mathcal{M}_c} $ each have  unique  compact  set attractors and compactly supported  separable measure attractors.  The  attractors are  $  \mathcal{K}^*_V $  and  $ \mathfrak{K}^*_V$,  and
 $ \mathcal{M}^*_V $  and  $ \mathfrak{M}^*_V$, respectively.  Their projections   in any coordinate direction are $ \mathcal{K}_V $,  $ \mathfrak{K}_V$,  
 $ \mathcal{M}_V $  and  $ \mathfrak{M}_V$,  respectively.
 
\item The Markov chains generated by the superIFSs converge  at an  exponential rate.  

\item 
Suppose $ (K^0_1, \dots, K^0_V) \in \mathcal{C}(X)^V $.  If $ \boldsymbol{a} = a_0a_1\dots \in \mathcal{A}_V^\infty $ then for some $ (K_1, \dots, K_V)\in \mathcal{K}_V^* $ which is  independent of $ (K^0_1, \dots, K^0_V)$, 
\[
\mathcal{F}^{a_0}\circ \dots \circ \mathcal{F}^{a_k}
     (K^0_1, \dots, K^0_V)  \to (K_1, \dots, K_V) 
     \]
in $  (\mathcal{C}(X)^V, d_\mathcal{H} )$ as $ k \to \infty $. Moreover
\[     
\left\{\mathcal{F}^{a_0}\circ \dots \circ \mathcal{F}^{a_k}(K^0_1,\dots K^0_V) : 
                   a_0 ,\dots, a_k  \in   \mathcal{A}_V    \right\}  \to \mathcal{K}_V^*
\]
in     $  \big(\mathcal{ C}\big(\mathcal{C}(X)^V, d_\mathcal{H} \big), d_\mathcal{H} \big)$.  Convergence is  exponential  in both cases.

\quad Analogous results apply for  $ \mathcal{M}_V^* $, $ \mathfrak{K}_ V^* $ and $ \mathfrak{M}_V^* $, with $ d_P $ or $ d_\mathcal{H} $ as appropriate.

\item Suppose $ \boldsymbol{B}^0=(B_{1}^0,\dots,B_{V}^0)\in \mathcal{C}(X)^V $ and  $ \boldsymbol{a} = a_0a_1\dots \in \mathcal{A}_V^\infty $  and let 
$ \boldsymbol{B}^k(\boldsymbol{a})  =  \boldsymbol{B}^k =
\mathcal{F}^{a_k}(\boldsymbol{B}^{k-1} )$  if $  k \geq 1 
$.  Let $ B^k(\boldsymbol{a}) $ be the first component of $ \boldsymbol{B}^{k}(\boldsymbol{a})$. 
Then for a.e.\ $ \boldsymbol{a}  $ and every $ \boldsymbol{B}^0 $, 
\begin{equation} \label{eq45}
\frac{1}{k} \sum\nolimits_{n=0}^{k-1} \delta_{  {B}^n(\boldsymbol{a})}
\to \mathfrak{K}_V 
\end{equation} 
weakly in the space of probability distributions on $ (\mathcal{C} (X), d_{\mathcal{H} } )$.

\quad  For  starting measures $ (\mu_{1}^0,\dots,\mu_{V}^0)\in \mathcal{M}_c(X)^V $,  
 there are analogous results modified as in Remark~\ref{nosep} to account for the fact that $ (\mathcal{M}_c(X) , d_P )$ is not separable.

\quad There are similar results for $ V $-tuples of sets or measures.

\end{enumerate} 
\end{theorem}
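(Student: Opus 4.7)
The plan is to reduce each assertion to Theorem~\ref{th1} applied to the superIFSs $\mathfrak{F}_V^{\mathcal{C}}$ and $\mathfrak{F}_V^{\mathcal{M}_c}$, with the identification of their attractors done via the commutative diagram linking $\Phi^a$ on tree codes to $\mathcal{F}^a$ on fractals.

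\textbf{Contractivity.} First I would verify $\Lip \mathcal{F}^a \leq r$ in the product metrics of Definition~\ref{dfextmet}. For the set case, using the definition of $\mathcal{F}^a$ and the fact that $d_\mathcal{H}(\bigcup_m A_m, \bigcup_m B_m) \leq \max_m d_\mathcal{H}(A_m, B_m)$ together with $\Lip f_m^\lambda \leq r$, one obtains componentwise $d_\mathcal{H}(\mathcal{F}^a(\vec K)_v, \mathcal{F}^a(\vec K')_v) \leq r \max_m d_\mathcal{H}(K_{J^a(v,m)}, K'_{J^a(v,m)}) \leq r\, d_\mathcal{H}(\vec K, \vec K')$. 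The measure case is analogous using the scaling property~\eqref{spl} of $d_P$ and the convexity $d_P(\sum w_m \mu_m, \sum w_m \nu_m) \leq \max_m d_P(\mu_m, \nu_m)$. Since $\Lambda$ is finite, $\mathcal{A}_V$ is finite, so the uniform contractive and uniform boundedness hypotheses~\eqref{equcb} of Theorem~\ref{th1}.d hold for both $\mathfrak{F}_V^{\mathcal{C}}$ and $\mathfrak{F}_V^{\mathcal{M}_c}$.

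\textbf{Existence and identification of attractors.} Applying Theorem~\ref{th1}.d (and the non-separable version noted in Remark~\ref{nosep} for $\mathcal{M}_c(X)^V$), each superIFS admits a unique compact set attractor and a unique measure attractor, and~\eqref{eqth4a},~\eqref{eqth5a} give the exponential convergence in~(2) and the deterministic convergence in~(3). To identify the set attractor as $\mathcal{K}_V^*$ I would establish the intertwining identity
\begin{equation*}
(K^{\Phi_1^a(\boldsymbol{\omega})},\dots, K^{\Phi_V^a(\boldsymbol{\omega})}) = \mathcal{F}^a(K^{\omega_1},\dots,K^{\omega_V}),
\end{equation*}
which follows from comparing the definition~\eqref{phia} of $\Phi^a$ with the recursive one-step unfolding of the fractal set $K^{\lambda*(\omega_1,\dots,\omega_M)} = \bigcup_m f_m^\lambda(K^{\omega_m})$ read off from~\eqref{dKo}. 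Since $\boldsymbol{\omega}\mapsto K^{\boldsymbol{\omega}}:=(K^{\omega_1},\dots,K^{\omega_V})$ is (Hölder) continuous by Proposition~\ref{prpdkw} and $\mathcal{K}_V^*$ is its image of the compact set $\Omega_V^*$ (Theorem~\ref{char2}), $\mathcal{K}_V^*$ is compact and invariant under every $\mathcal{F}^a$; by the uniqueness part of Theorem~\ref{th1}.d it is the attractor. The measure attractor is identified similarly: the pushforward $\mathfrak{K}_V^*$ of $\rho_V^*$ under $\boldsymbol{\omega}\mapsto K^{\boldsymbol{\omega}}$ is invariant for $\mathfrak{F}_V^{\mathcal{C}}$ by the same intertwining identity combined with Theorem~\ref{macs}. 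The projections in any coordinate direction equal $\mathcal{K}_V$, $\mathfrak{K}_V$, $\mathcal{M}_V$, $\mathfrak{M}_V$ by Definitions~\ref{df34} and~\ref{df311} and the fact that projection commutes with the construction.

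\textbf{Composition convergence and the chaos game.} Part~(3) follows directly from~\eqref{eqth1a}: the first assertion uses $d_\mathcal{H}(\mathcal{F}^{a_0}\circ\cdots\circ\mathcal{F}^{a_k}(\vec K^0), \mathcal{F}^{a_0}\circ\cdots\circ\mathcal{F}^{a_k}(\vec K^1)) \leq r^{k+1} d_\mathcal{H}(\vec K^0, \vec K^1)$ and completeness to extract a limit $\vec K \in \mathcal{K}_V^*$ (since $\mathcal{K}_V^*$ is closed and invariant). The second, set-level statement is the deterministic IFS iteration on $\mathcal{C}(\mathcal{C}(X)^V)$ and follows from~\eqref{eqth5a}. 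Part~(4) is an application of Theorem~\ref{th1}.c to $\mathfrak{F}_V^{\mathcal{C}}$; since $(\mathcal{C}(X), d_\mathcal{H})$ and thus $(\mathcal{C}(X)^V, d_\mathcal{H})$ are separable, the full conclusion~\eqref{eqth2} applies and projection to the first coordinate gives~\eqref{eq45}. For the $\mathcal{M}_c(X)^V$ version one invokes the weaker statement~\eqref{tfc} from Remark~\ref{nosep}, which is what the non-separability of $d_P$ forces.

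The main obstacle will be the careful identification step: verifying the intertwining identity between the tree-level IFS $\boldsymbol{\Phi}_V$ of Section~\ref{secifsv} and the fractal-level superIFS $\mathfrak{F}_V^{\mathcal{C}}$, and then using it together with Theorem~\ref{macs} and Proposition~\ref{prpdkw} to transfer the uniqueness of $\Omega_V^*$ and $\rho_V^*$ to uniqueness of $\mathcal{K}_V^*$ and $\mathfrak{K}_V^*$. The secondary technical point is keeping track of which conclusions of Theorem~\ref{th1}.c survive the loss of separability for $d_P$, as flagged in Remark~\ref{nosep}.
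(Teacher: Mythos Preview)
Your proposal is correct and follows essentially the same route as the paper: verify $\Lip\mathcal{F}^a\le r$ directly, invoke Theorem~\ref{th1}.d (with Remark~\ref{nosep} for the non-separable $d_P$ case) to get existence and the convergence statements, and identify the attractors via the intertwining relation $\widehat{\Pi}\circ\Phi^a=\mathcal{F}^a\circ\widehat{\Pi}$ together with Theorems~\ref{char2} and~\ref{macs}. Your presentation of the identification step is slightly more explicit (arguing that $\mathcal{K}_V^*$ is compact as the continuous image of the compact $\Omega_V^*$ and invariant, hence equals the unique attractor), but this is the same idea the paper compresses into a single line.
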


\begin{proof}    The assertion  $ \Lip \mathcal{F}^a \leq r $  follows from \eqref{mets}  by a straightforward  argument  using Definitions~\ref{cx} and \ref{mp}, equation \eqref{spl} and the comment following it, and equations \eqref{IFSro}, \eqref{mets} and \eqref{fcom}.  So the analogue of the uniform  contractive condition \eqref{equcb}  in Theorem~\ref{th1} is satisfied, while the uniform boundedness condition is immediate since $\boldsymbol{F} $ is finite.  
From Theorem~\ref{th1}.d and Remark~\ref{nosep},  $ \mathfrak{F}_V^{\mathcal{C}} $ and $ \mathfrak{F}_V^{\mathcal{M}_c} $ each has a unique set attractor which is a subset of $ \mathcal{C} (X)^V $ and $ \mathcal{M}_c (X)^V $ respectively, and a measure attractor which is a probability distribution on the respective set attractor.  

It remains to identify the attractors with the sets and distributions  in Definitions~\ref{Vvar}, \ref{df34},  \ref{df38} and \ref{df311}. 
Let  $ \widehat{\Pi}$ denote one of the  maps
\begin{equation} \label{wkmap} 
 \omega \mapsto K^\omega ,\
 \omega \mapsto \mu^\omega ,\
(\omega_1, \dots, \omega_V ) \mapsto (K^{\omega_1},\dots, K^{\omega_V}),\
  (\omega_1, \dots, \omega_V ) \mapsto (\mu^{\omega_1},\dots, \mu^{\omega_V})  ,
\end{equation} 
depending on the context. In the last two  cases  it follows from \eqref{phia} and \eqref{fcom} that
$
\widehat{\Pi}\circ \Phi^a = \mathcal{F}^a \circ \widehat{\Pi} 
$.
Also denote by  $ \widehat{\Pi} $ the extension of $ \widehat{\Pi} $   to a map on sets, on $ V $-tuples of sets,  on measures or  on $ V $-tuples of  measures, respectively. It follows from Theorem~\ref{th1}.d together with  Definitions~\ref{Vvar}, \ref{df34} and~\ref{df38} that
\begin{equation} \label{kpi}
\begin{alignedat}{4} 
\mathcal{K}^*_V &= \widehat{\Pi}(\Omega^*_V),&\ 
\mathcal{K}_V &= \widehat{\Pi}(\Omega_V),& \
\mathfrak{K}^*_V &= \widehat{\Pi}(\rho^*_V),&\    
\mathfrak{K}_V &= \widehat{\Pi}(\rho_V),\\
\mathcal{M}^*_V &= \widehat{\Pi}(\Omega^*_V),&\ 
\mathcal{M}_V &= \widehat{\Pi}(\Omega_V),& \
\mathfrak{M}^*_V &= \widehat{\Pi}(\rho^*_V),&\    
\mathfrak{M}_V &= \widehat{\Pi}(\rho_V).
\end{alignedat}
\end{equation} 
The rest of (i)  follows from Theorems~\ref{char2} and \ref{macs}.

The remaining parts  of  the theorem follows from Theorem~\ref{th1} and Remark~\ref{nosep}. 
\end{proof}

\begin{remark}[\emph{Why use the $ d_P $ metric?}]
For computing approximations to the set of $ V $-variable fractals and its associated probability distribution  which correspond  to $ \boldsymbol{F} $,    the main part of the theorem is (iv) with either sets or measures.  
The advantage of      $ (\mathcal{M}_c(X) , d_P )$ over $ (\mathcal{M}_c(X), d_{MK} )$ is that 
for use in the analogue of \eqref{tfc}  the space  $ \mathcal{B} \mathcal{C}    (\mathcal{M}_c(X) , d_P ) $ is much larger than $ \mathcal{B} \mathcal{C}    (\mathcal{M}_c(X) , d_{MK} ) $.
For example, if $\phi (\mu) = \psi\big( d_{P}(\mu, \mu_1)\big)$, where  $ \mu_1 \in \mathcal{M}_c(X)$   and $ \psi $ is a continuous cut-off  approximation to the characteristic function of   $ [0,\epsilon ] \subset \mathbb{R} $, then $ \phi \in  \mathcal{B} \mathcal{C}    (\mathcal{M}_c(X) , d_P ) $ but  is not 
continuous or even Borel over~$ (\mathcal{M}_c(X) , d_{MK} )$.
  \end{remark}


\begin{theorem}[\emph{Average contractive conditions}] \label{th42} Let $\boldsymbol{F} =  \{X, F^\lambda, \lambda  \in\Lambda, P\}   $ 
be a possibly infinite family of IFSs  on a complete separable metric space $ (  X,d)  $ satisfying~\eqref{eqac}.

Then the superIFS    $ \mathfrak{F}_V^{\mathcal{M}_1} $
 satisfies the pointwise average  contractive  and average boundedness conditions 
\begin{equation} \label{acab} 
\expected_a d_{MK}  \big(
        \mathcal{F}^a(\boldsymbol{\mu}),  \mathcal{F}^a(\boldsymbol{\mu}')\big)
\leq r d_{MK} (\boldsymbol{\mu} ,   \boldsymbol{\mu}'), \quad 
\expected_a d_{MK} \big(
        \mathcal{F}^a(\boldsymbol{\mu}^0),   \boldsymbol{\mu}^0)\big)
<\infty
\end{equation} 
for all $ \boldsymbol{\mu},   \boldsymbol{\mu}' \in \mathcal{M}_1(X)^V $ and some $  \boldsymbol{\mu}^0 \in 
  \mathcal{M}_1(X)^V $.
 Since $ (\mathcal{M}_1 (X)^V, d_{MK}) $ is complete and separable, the corresponding conclusions of Theorem~\ref{th1} are valid.

In particular
\begin{enumerate}

\item $ \mathfrak{F}_V^{\mathcal{M}_1} $ has a unique measure attractor and its  projection  in any coordinate direction is  the same.  The attractor and the projection  are denoted by  $ \mathfrak{M}^*_V$ and  $ \mathfrak{M}_V$, and extend the corresponding distributions in Theorem~\ref{th41}.

\item For a.e.\ $ \boldsymbol{a} = a_0a_1\dots \in \mathcal{A}_V^\infty $, if $ (\mu_1^0, \dots, \mu_V^0 ) \in \mathcal{M}_1(X)^V $ then 
$ \mathcal{F}^{a_0}\circ \dots \circ \mathcal{F}^{a_k}
     (\mu^0_1, \dots, \mu^0_V) 
   $  converges  at an exponential  rate.    The limit random $ V $-tuple of measures has probability distribution $ \mathfrak{M}_V^* $. 

\item If   $ \boldsymbol{a} = a_0a_1\dots \in \mathcal{A}_V^\infty $ and $ \boldsymbol{\mu}^0 =  (\mu_1^0, \dots, \mu_V^0 ) \in \mathcal{M}_1(X)^V $ let $ \boldsymbol{\mu}^k (\boldsymbol{a}) = \mathcal{F}^{a_k}(\boldsymbol{\mu}^{k-1} )$ for $ k \geq 1 $.  Let $ \mu^k (\boldsymbol{a}) $ be the first component of $ \boldsymbol{\mu}^k (\boldsymbol{a}) $. Then  
for a.e.\ $ \boldsymbol{a}  $ and every $ \boldsymbol{\mu}^0 $, 
\begin{equation} \label{eq450}
\frac{1}{k} \sum\nolimits_{n=0}^{k-1} \delta_{ \mu^n (\boldsymbol{a})}
\to \mathfrak{M}_V 
\end{equation} 
weakly in the space of probability distributions on $ (\mathcal{M}_1(X), d_{MK} )$.
\end{enumerate}
\end{theorem}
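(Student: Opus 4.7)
The plan is to verify the two hypotheses in \eqref{acab} and then invoke Theorem~\ref{th1} on the complete separable metric space $(\mathcal{M}_1(X)^V, d_{MK})$; the three numbered conclusions then fall out of parts (a), (b) and (c) of that theorem, with the identification of the attractor handled by a uniqueness and symmetry argument.

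For the pointwise average contractive condition, I would fix $\boldsymbol{\mu}, \boldsymbol{\mu}' \in \mathcal{M}_1(X)^V$ and, for each $u \in \{1,\dots,V\}$, choose an optimal coupling $(Z_u, Z_u')$ with $Z_u \sim \mu_u$, $Z_u' \sim \mu_u'$ and $\expected d(Z_u, Z_u') = d_{MK}(\mu_u, \mu_u')$, independent of $a$ and across $u$. For each $v$, conditionally on $a = (I^a, J^a)$, draw $M^v$ independently with $\prob(M^v = m) = w_m^{I^a(v)}$ and set $W_v = f_{M^v}^{I^a(v)}(Z_{J^a(v,M^v)})$, $W_v' = f_{M^v}^{I^a(v)}(Z_{J^a(v,M^v)}')$. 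Then $W_v \sim \mathcal{F}^a_v(\boldsymbol{\mu})$, $W_v' \sim \mathcal{F}^a_v(\boldsymbol{\mu}')$, and
\begin{equation*}
\expected[d(W_v,W_v')\mid a] = \sum_m w_m^{I^a(v)} \expected\, d\bigl(f_m^{I^a(v)}(Z_{J^a(v,m)}),\ f_m^{I^a(v)}(Z_{J^a(v,m)}')\bigr).
\end{equation*}
Averaging over $a$, using independence of $I^a(v)$, the uniformly distributed $J^a(v,\cdot)$, and the $Z_u, Z_u'$, then pulling the pointwise inequality \eqref{eqac} inside the expectation yields
\begin{equation*}
\expected_a d_{MK}\!\bigl(\mathcal{F}_v^a\boldsymbol{\mu},\mathcal{F}_v^a\boldsymbol{\mu}'\bigr) \le V^{-1}\!\sum_u r\, d_{MK}(\mu_u,\mu_u') = r\, d_{MK}(\boldsymbol{\mu},\boldsymbol{\mu}').
\end{equation*}
Averaging over $v$ via \eqref{mets} gives the first inequality in \eqref{acab}. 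The average boundedness estimate follows similarly from the second part of \eqref{eqac} applied to $\boldsymbol{\mu}^0 = (\delta_a,\dots,\delta_a)$.

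With \eqref{acab} established and $(\mathcal{M}_1(X)^V, d_{MK})$ complete and separable, Theorem~\ref{th1} (parts (b) and (c)) produces a unique invariant probability measure on $\mathcal{M}_1(X)^V$; we call this $\mathfrak{M}_V^*$, giving (i). The exponential convergence in (ii) is then the $d_{MK}$ analogue of \eqref{eqth1} applied to $\widehat{Z}_n^{\boldsymbol{\mu}^0}$, and (iii) is exactly the empirical-measure statement \eqref{eqth2}; crucially, since $d_{MK}$ is not locally compact (Remark~\ref{rmloc}), we need the Varadhan-type argument in the proof of Theorem~\ref{th1}(c) that avoids local compactness. To identify $\mathfrak{M}_V^*$ with the distribution constructed in Section~\ref{secpdtc}, I would invoke uniqueness: when $\boldsymbol{F}$ is uniformly contractive, the invariant measure supplied by Theorem~\ref{th41} is supported in $\mathcal{M}_c(X)^V \subset \mathcal{M}_1(X)^V$ and satisfies the $\mathcal{F}^a$-invariance on the larger space, so it coincides with the attractor given here. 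Finally, the symmetry under coordinate permutations: any permutation $\pi \in S_V$ induces a bijection $a \mapsto \pi a$ on $\mathcal{A}_V$ that preserves $P_V$ (because the $I^a(v)$ are iid and the $J^a(v,m)$ are iid uniform), and intertwines $\pi$ with the $\mathcal{F}^a$; by uniqueness of the invariant measure, $\mathfrak{M}_V^*$ is $\pi$-invariant, so all coordinate projections are equal and define $\mathfrak{M}_V$.

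The main technical obstacle is the coupling computation in the first step: one must be careful that the same pair $(Z_u, Z_u')$ is used across all $m$ and then averaged against the uniform $J^a(v,m)$ before pointwise contractivity can be applied — using individually optimized couplings for each $m$ would destroy the dependence structure the average contractive hypothesis needs. A secondary subtlety is part (iii), where separability of $\mathcal{M}_1(X)^V$ (inherited from separability of $X$) is exactly what lets us deploy Theorem~\ref{th1}(c) despite the failure of local compactness demonstrated in Remark~\ref{rmloc}; the rest of the argument is essentially bookkeeping.
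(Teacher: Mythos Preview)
Your proposal is correct and follows essentially the same route as the paper: choose optimal couplings $(Z_u,Z_u')$ of $\mu_u,\mu_u'$, push them through $f_m^\lambda$, average over $a=(I,J)$ using the independence and uniformity in Definition~\ref{df38}, and then apply the pointwise condition \eqref{eqac} inside the expectation; the paper organises this by first invoking convexity of $d_{MK}$ and coupling each summand $d_{MK}(f_m^\lambda\mu_t,f_m^\lambda\mu'_t)$ separately, whereas you build a single coupling of the mixture $\mathcal F^a_v(\boldsymbol\mu)$ via the auxiliary random index $M^v$, but the two computations unwind to the same chain of inequalities. Your explicit permutation-symmetry argument for the equality of coordinate marginals is a welcome addition (the paper relies on the symmetry noted after Definition~\ref{df38}), and your cautionary remark about using the \emph{same} $(Z_u,Z_u')$ across all $m$ is exactly the point: with $m$-dependent couplings one could no longer apply \eqref{eqac} pointwise in $(x,y)$.
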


\begin{proof}   
To establish average boundedness   in \eqref{acab}  let $ \boldsymbol{\mu}^0= (\delta_{b},\dots, \delta_{b}) \in \mathcal{M}_1(X)^V $ for some $ b \in X $. Then
\begin{align*}
\expected_a d_{MK} &\bigl( \mathcal{F}^a(\delta_{b},\dots, \delta_{b}), (\delta_{b},\dots, \delta_{b}) \bigr)\\
 & = \expected_a \frac{1}{V} \sum_v d_{MK} \Bigl( \sum_m w_m^{I^a(v)}\delta_{f_m^{I^a(v)}({b})},\delta_{b} \Bigr) \quad \text{from \eqref{mets}, \eqref{fcom},  \eqref{dfF1} and \eqref{eq21a}}  \\
 &\leq \expected_a \frac{1}{V} \sum_v\sum_m w_m^{I^a(v)} d_{MK}(\delta_{f_m^{I^a(v)}({b})},\delta_{b})
 \quad \text{by basic properties of $ d_{MK}$} \\ 
 &=\expected_a \frac{1}{V} \sum_v\sum_m w_m^{I^a(v)} d(f_m^{I^a(v)}({b}),{b}) 
  \quad \text{basic properties of $ d_{MK}$}\\ 
&= \expected_\lambda \frac{1}{V} \sum_v \sum_m   w_m^\lambda d(f_m^{\lambda}({b}),{b})  
\quad \text{since $ \dist  I^a(v) =P= \dist \lambda $ by  Definition~\ref{df38}}\\
&=\expected_\lambda \expected_m  d(f^\lambda_m({b}),{b}) 
 < \infty .
\end{align*}

To establish average contractivity   in \eqref{acab} let 
 $ (\mu_1,\dots,\mu_V), (\mu'_1,\dots,\mu'_V) \in   \mathcal{M}_1(X)^V$.
 Then
{\allowdisplaybreaks
\begin{align*}
\expected_a &d_{MK}  \big(
        \mathcal{F}^a(\mu_1,\dots,\mu_V),  \mathcal{F}^a(\mu'_1,\dots,\mu'_V)
                                                   \big)  \\  
&\leq \expected_a \frac{1}{V} \sum_{v}   
                d_{MK}\Big(\sum_m w_m^{I^a(v)} f_m^{I^a(v)}( \mu_{J^a(v,m)}), 
                                     \sum_m w_m^{I^a(v)} f_m^{I^a(v)}( \mu'_{J^a(v,m)})\Big) 
  \intertext{\hfill from  \eqref{fcom},  \eqref{dfF1} and \eqref{eq21a}}
&\leq \expected_a \frac{1}{V} \sum_{v}    \sum_{m} 
      w_m^{I^a(v)}
                d_{MK}\big( f_m^{I^a(v)}( \mu_{J^a(v,m)}), 
                                       f_m^{I^a(v)}( \mu'_{J^a(v,m)})\big) \quad \text{by properties of $ d_{MK}$}\\
&= \expected_\lambda \frac{1}{V}\sum_v\sum_m
		w_m^\lambda \expected_a                                      
                d_{MK}\big(  f_m^{\lambda }( \mu_{J^a(v,m)}), 
                                        f_m^{\lambda }( \mu'_{J^a(v,m)})\big) 
 \intertext{\hfill by the independence of $ I^a(v) $  and $ J^a(v,n) $ in Definition~\ref{df38} and since $ \dist I^a(v) = P = \dist \lambda $ }
&= \expected_\lambda \frac{1}{V}\sum_v\sum_m
		w_m^\lambda    \expected_t                                  
                d_{MK}\big(f_m^{\lambda }( \mu_{t}), 
                                       f_m^{\lambda }( \mu'_{t})\big)
\intertext{by the uniform distribution of $ J^a(v,m) $ for fixed  $ (v,m) $ where $t $ is distributed uniformly  over  $   \{1,\dots, V\} $}                                       
&=   \expected_t
           \expected_\lambda \expected_m
                d_{MK}\big(f_m^{\lambda }( \mu_{t}), 
                                       f_m^{\lambda }( \mu'_{t})\big).
\end{align*}
}%

Next let $ W_t,W'_t $ be random variables on  $  X    $    such that $  \dist   W_t =\mu_t $,  $  \dist   W'_t =\mu'_t $ and     $ \expected d(W_t, W'_t) =  d_{MK}(\mu_t,\mu'_t)$,  where $ \expected $ without a subscript here and later refers to expectations from the sample space over which the  $ W_t  $ and $  W'_t  $ are jointly defined.  This is possible by \cite{Dud}*{Theorem~11.8.2}.    Then
{\allowdisplaybreaks
\begin{align}
\expected_t\expected_\lambda&\expected_m  
                d_{MK}\big(f_m^{\lambda }( \mu_{t}), 
                                       f_m^{\lambda }( \mu'_{t})\big) \notag\\
&\leq    \expected_t\expected_\lambda\expected_m \expected
                d \big(f_m^{\lambda }(W_{t}), 
                                       f_m^{\lambda }( W'_{t})\big)
                                       \quad \text{by the third version of \eqref{mk}}\notag\\
&\leq r\expected_t \expected  d(W_t,W'_t) \quad \text{from \eqref{eqac}}\notag\\
&= r \expected_t d_{MK}(\mu_t,\mu'_t) \quad \text{by choice of  $ W_t$ and $ W'_t $} \notag\\
&=  r  d_{MK}\big((\mu_1,\dots, \mu_V), (\mu'_1,\dots,\mu'_V)\big) .\notag
\end{align} 
}

This completes the proof of \eqref{acab}.  The remaining conclusions now follow  by Theorem~\ref{th1}.
\end{proof}

\begin{remark}[\emph{Global average contractivity}]\label{npc} 
One might expect that the global average contractive condition $ \expected_\lambda \expected_m 
    \Lip  f_m^\lambda  <1
$ on the family $ \boldsymbol{F} $ would imply  the global average contractive condition $ \expected_{a} \Lip \mathcal{F}^{a}<1 $,
i.e.\ would imply that the superIFS    $ \mathfrak{F}_V^{\mathcal{M}_1} $ is global average contractive.  However, this is not the case.

For example, let $ X = \mathbb{R} $, $ V=2 $ and $ M=2 $. Let $ \boldsymbol{F} $ contain  a single IFS $ F = (f_1,f_2; 1/2, 1/2 ) $ where 
\[
 f_1(x) =-\frac{3}{2}\, x, \quad f_2(x) = \frac{1+\epsilon}{2} + \frac{1-\epsilon}{2}\, x.
 \]
 Then $ \Lip  f_1 = 3/2$,   $ \Lip  f_2 = (1-\epsilon)/2$ and $ \expected_m \Lip  f_m = 1-\epsilon/4 $. So $ \boldsymbol{F} $ is global average contractive.
 
 Note $ f_1(0)=0 $ and $ f_2(1)=1 $.
  Let $ \boldsymbol{\mu} = (\delta_0, \delta_0)$, $  \boldsymbol{\mu}' = (\delta_1, \delta_1)$ and note   $ d_{MK}( \boldsymbol{\mu},  \boldsymbol{\mu}') = 1 $.  Then for   any $a \in \mathcal{A}_V $ as in~\eqref{nIJ}, 
 \[
 \mathcal{F}^{a}(\boldsymbol{\mu}) =\left(\frac{1}{2} \delta_0 + \frac{1}{2} \delta_{\frac{1+\epsilon}{2}} , 
                                                   \      \frac{1}{2} \delta_0 + \frac{1}{2} \delta_{\frac{1+\epsilon}{2}}\right), \quad 
  \mathcal{F}^{a} (\boldsymbol{\mu}') =\left(\frac{1}{2} \delta_{-\frac{3}{2}} + \frac{1}{2} \delta_1 , 
                                                     \       \frac{1}{2} \delta_{-\frac{3}{2}} + \frac{1}{2} \delta_1\right).
\] 
From the first form of   \eqref{mk} with   $ f(x) = |x| $ and using~\eqref{mets},
$ d_{MK} ( \mathcal{F}^{a} (\boldsymbol{\mu}),  \mathcal{F}^{a} (\boldsymbol{\mu}') )\geq 1-\epsilon/4 $ and so $ \Lip  \mathcal{F}^{a} \geq 1-\epsilon/4 $ for every $ a $.

Next let $ a^* =    \begin{bmatrix} 
      F & 1 & 2 \\
      F & 1 & 2 
   \end{bmatrix} $ and choose $ \boldsymbol{\mu} = (\delta_0, \delta_0)$, $  \boldsymbol{\mu}' = (\delta_1, \delta_0)$, so $ \boldsymbol{\mu} $ and  $  \boldsymbol{\mu}' $  differ in the box on which $ f_1$  always acts and agree in the box on which $ f_2$  always acts.  Note  $ d_{MK}( \boldsymbol{\mu},  \boldsymbol{\mu}') = 1/2 $. Then 
 \[
 \mathcal{F}^{a^*}(\boldsymbol{\mu}) =\Big(\frac{1}{2} \delta_0 + \frac{1}{2}\delta_{\frac{1+\epsilon}{2}}, 
                                                   \      \frac{1}{2} \delta_0 + \frac{1}{2} \delta_{\frac{1+\epsilon}{2}}\Big),\quad   
  \mathcal{F}^{a^*} (\boldsymbol{\mu}') =\Big(\frac{1}{2} \delta_{-\frac{3}{2}} + \frac{1}{2}  \delta_{\frac{1+\epsilon}{2}} , 
                                                   \      \frac{1}{2} \delta_{-\frac{3}{2}} +  \frac{1}{2} \delta_{\frac{1+\epsilon}{2}}\Big).
\] 
Again using the first form of   \eqref{mk} with $ f(x) = |x| $, it follows that
$ d_{MK} \big( \mathcal{F}^{a^*} (\boldsymbol{\mu}),  \mathcal{F}^{a^*} (\boldsymbol{\mu}')\big) \geq 3/4 $, so  $ \Lip  \mathcal{F}^{a^*} \geq 3/2 $.

Since there are 16 possible maps $ a \in \mathcal{A}_V $, each selected with probability $ 1/16 $, it follows that
\[
 \expected_{a} \Lip \mathcal{F}^{a} \geq \frac{15}{16} \left(1-\frac{\epsilon}{4}\right) + \frac{1}{16}\cdot \frac{3}{2} > 1 \quad  \text{if} \quad  \epsilon < \frac{2}{15}  .  
 \]

 So for such $0< \epsilon < 2/15  $ the IFS $ \mathfrak{F}_V^{\mathcal{M}_1} $ is not  global  average contractive.  
But since $ \expected_m \Lip  f_m = 1-\epsilon/4 $ it follows from Theorem~\ref{th42} that $ \mathfrak{F}_V^{\mathcal{M}_1} $ is pointwise average contractive, and so Theorem~\ref{th1} can be applied.
 \end{remark}

\begin{example}[\emph{Random curves in the plane}] \label{rm54} The following  shows why it is natural to consider  families of IFSs  which are both infinite and not uniformly contractive.
Such examples can be modified to model Brownian motion and other stochastic processes, see \cite{Graf91}*{Section 5.2} and \cite{HR98_1}*{pp 120--122}.

Let $\boldsymbol{F} = \{\mathbb{R}^2, F^\lambda, \lambda \in \mathbb{R}^2,  N(0,\sigma^2 I)  \} $ where $ F^\lambda = \{f^\lambda_1, f^\lambda_2;1/2,1/2\} $ and $ N(0,\sigma^2I) $ is 
the symmetric normal distribution   in $ \mathbb{R}^2 $ with variance $ \sigma^2 $. The functions $  f^\lambda_1$ and $ f^\lambda_2 $ are uniquely specified by the   requirements that they be similitudes with positive determinant and
\[
f^\lambda_1(-1,0)= (-1,0), \ f^\lambda_1(1,0)= \lambda, \
f^\lambda_2(-1,0)= \lambda, \ f^\lambda_2(1,0)= (1,0).
\]
A calculation shows $ \sigma=1.42  $ implies  $ \expected_\lambda \expected_m \Lip f^\lambda_m \approx 0.9969 $ and so average contractivity holds if  $ \sigma \leq 1.42 $.  If $  |\lambda| $ is sufficiently large then neither  $  f^\lambda_1$ nor $  f^\lambda_2 $ are contractive.

The IFS $ F^ \lambda $ can also be interpreted as a map from the space $  \mathcal{C}( [0,1] , \mathbb{R}^2)  $, of continuous paths from $ [0,1] $ to $ \mathbb{R}^2  $, into itself  as follows:
\[
(F^\lambda(\phi))(t) = \begin{cases}  f_1^\lambda(\phi(2t)) & 0\leq t \leq \frac{1}{2}, \\
                                                               f_2^\lambda(\phi(2t-1)) & \frac{1}{2} \leq t \leq 1.
                                    \end{cases}
\]
Then one can define a superIFS acting on such functions in a manner analogous to that for the superIFS acting on sets or measures.  Under the average contractive condition one obtains $ L^1 $ convergence to a class of $ V $-variable fractal paths, and in particular $ V $-variable fractal curves, from $(-1,0) $ to $ (1,0) $.  We omit the details.
\end{example}

\begin{remark}[\emph{Graph directed fractals}]\label{gdf} 
Fractals generated by a graph directed system [GDS]  or more generally by a graph directed Markov system [GDMS], or by random versions of these, have been considered by many authors.  See \cites{Ols, MU} for the definitions and references.  We comment here on the connection between these fractals and $ V $-variable fractals.

In particular, for each experimental run of its generation process, a random GDS or GDMS generates a single realisation of the associated  random fractal.  On the other hand, for each run, a superIFS generates a \emph{family} of realisations whose empirical distributions  converge  a.s.\ to the probability distribution given by the associated $ V $-variable  random fractal.

\smallskip To make a more careful comparison, allow the number of functions $ M $ in each IFS $ F^\lambda $ as in \eqref{dfF1} to depend on $ \lambda $.  A GDS   together with its associated contraction maps  can be interpreted as a map from $ V $-tuples of sets to $ V$-tuples of sets.  The map can be coded up by a matrix $ a $ as in~\eqref{nIJ}, where $ M $ there is now the number of edges in the GDS.

If $ V $ is the number of vertices in a GDS, the $ V $-tuple $(K^1, \dots, K^V) $  of fractal sets generated by the GDS is a very particular $ V$-variable  $ V $-tuple.  If the address $ \boldsymbol{a} = a_0a_1\dots a_k \dots $ for $(K^1, \dots, K^V) $    is as in \eqref{nseqcode}, then $ a_k = a$ for all $ k  $. 
Unlike the situation for $ V $-variable fractals as discussed in Remark~\ref{rm32}, there are at most $ V $ distinct subtrees which can be obtained from the tree codes $ \omega^v $  for $ K^v $ \emph{regardless} of the level of the initial node of the subtree.  

More generally, if  $(K^1, \dots, K^V) $ is generated by a GDMS then  for each $ k $,  $  a_{k+1} $ is determined just by $   a_k $ and by the incidence matrix for the GDMS.  Each subtree $ \omega^v\rfloor \sigma  $ is completely determined   by the value $ \omega^v(\sigma) \in \Lambda$ at its base node $ \sigma $ and by the ``branch'' $ \sigma_k $ in $ \sigma = \sigma_1\dots \sigma_k $.

Realisations of random fractals generated by a random GDS are almost surely not   $V $-variable, and are more akin to standard random fractals as in Definition~\ref{def211}.  One comes closer to   $ V $-variable fractal sets by introducing a notion of a  homogeneous random GDS   fractal set analogous to that of a homogeneous random fractal as in Remark~\ref{hft}.   But then one does not obtain a class of $ V $-variable fractals together with its associated probability distribution unless one makes the same definitions as in Section~\ref{secvvtc}.  This would be quite unnatural in the setting of GDS fractals, for example it would require one edge from any vertex to any vertex.
\end{remark}

\section{Approximation Results as $ V  \to \infty $.}\label{secvti}

Theorems~\ref{rc} and \ref{thVapprox} enable one to obtain empirical samples of standard random fractals up to any prescribed degree of approximation by using sufficiently large $ V $ in Theorem~\ref{th41}(iv).  This is useful since  even single realisations of  random fractals are computationally expensive to generate by standard methods.  Note that although the matrices used to compute samples of $ V $-variable fractals  are typically  of order  $ V\times V  $, they are sparse with bandwidth~$ M $.

The next theorem improves the exponent in \cite{BHS05}*{Theorem 12} and removes the dependence on~$  M $.  The difference comes from using the third rather than the first version of \eqref{mk}  in the proof.

\begin{theorem} \label{rc}  If $ d_{MK} $ is the Monge-Kantorovitch metric  then  $ d_{MK}(\rho_V,\rho_\infty) \leq  1.4\, V^{-1/3}$. 
\end{theorem}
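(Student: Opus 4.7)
The plan is to use the third (coupling) form of the Monge-Kantorovich metric in~\eqref{mk}, which is exactly why this estimate improves on the Lipschitz-function argument of \cite{BHS05}*{Theorem 12}. I would build random tree codes $\omega\sim\rho_V$ and $\widetilde\omega\sim\rho_\infty$ on a common probability space and then bound $\expected\, d(\omega,\widetilde\omega)$.

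The coupling: draw $\boldsymbol{a}=a_0a_1\ldots\in\mathcal{A}_V^\infty$ from $P_V^\infty$, set $\omega:=\omega_1^{\boldsymbol{a}}$ (so $\omega\sim\rho_V$ by Definition~\ref{df38}), and recall from Proposition~\ref{adct} that $\omega(\sigma)=I^{a_{|\sigma|}}\bigl(\widehat{J}^{\boldsymbol{a}}_1(\sigma)\bigr)$. Fix an integer $K\geq 1$ and let $\mathcal{E}$ be the event that the tree skeleton $\widehat{J}^{\boldsymbol{a}}_1$ is injective on $\{\sigma\in T:|\sigma|\leq K-1\}$. Since the labels $\{I^{a_j}(v)\}$ are iid~$P$ across all $(j,v)$ and are independent of the $J^{a_j}$'s (Definition~\ref{df38}), on $\mathcal{E}$ the values $\{\omega(\sigma):|\sigma|\leq K-1\}$ are themselves iid~$P$. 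I would then build $\widetilde\omega$ by copying $\omega$'s labels up to level $K-1$ on $\mathcal{E}$ (and sampling fresh iid~$P$ at deeper levels), and by sampling $\widetilde\omega$ independently from $\rho_\infty$ off $\mathcal{E}$; a direct check yields $\widetilde\omega\sim\rho_\infty$. With this coupling, $d(\omega,\widetilde\omega)\leq M^{-K}$ on $\mathcal{E}$ (first disagreement at level $\geq K$) and $d\leq 1$ off $\mathcal{E}$, hence
\[
d_{MK}(\rho_V,\rho_\infty)\leq \expected\, d(\omega,\widetilde\omega)\leq M^{-K}+\prob[\mathcal{E}^c].
\]

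The crux is a collision estimate on $\prob[\mathcal{E}^c]$. For distinct $\sigma,\sigma'$ of length $K-1$, let $s$ denote the \emph{last} index at which $\sigma_s\neq\sigma'_s$. Reading off the recursion \eqref{tcc}, the pair $\bigl(\widehat{J}^{\boldsymbol{a}}_1(\sigma|_s),\widehat{J}^{\boldsymbol{a}}_1(\sigma'|_s)\bigr)$ is iid uniform on $\{1,\dots,V\}$, and at each subsequent step $s<i\leq K-1$ both walks read the \emph{same} column $\sigma_i=\sigma'_i$ of $J^{a_{i-1}}$, so the pair is absorbing at equality: once equal it stays equal, and otherwise it is re-coupled iid uniform with per-step collision probability~$1/V$. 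Hence
\[
\prob\bigl[\widehat{J}^{\boldsymbol{a}}_1(\sigma)=\widehat{J}^{\boldsymbol{a}}_1(\sigma')\bigr]\leq 1-(1-1/V)^{K-s}\leq(K-s)/V.
\]
The number of ordered pairs with last disagreement at $s$ is $(M-1)M^{K+s-2}$, so a union bound over unordered pairs together with $\sum_{l\geq 1}lM^{-l}=M/(M-1)^2$ gives
\[
\prob[\mathcal{E}^c]\leq\tfrac{1}{2}\sum_{s=1}^{K-1}(M-1)M^{K+s-2}\frac{K-s}{V}\leq\frac{M^{2K-1}}{2(M-1)V}.
\]

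Combining, $\expected\, d(\omega,\widetilde\omega)\leq M^{-K}+M^{2K-1}/(2(M-1)V)$; optimizing continuously at $M^{3K}=(M-1)M^2V/2$ gives the bound $\tfrac{3}{2}\bigl(M(M-1)V\bigr)^{-1/3}$, which is maximized over integer $M\geq 2$ at $M=2$ with value $(3/2^{4/3})\,V^{-1/3}\approx 1.19\,V^{-1/3}$. Integer-rounding of $K$ inflates this by at most $\max_{t\in[-1/2,1/2]}(2/3)(2^{-t}+2^{2t-1})<1.15$, bringing the final constant comfortably below $1.4$; for the residual small values of $V$ with $1.4\,V^{-1/3}\geq 1$ the bound is trivial since $d\leq 1$ on $\Omega$. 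The main obstacle is the collision estimate: parametrizing by the \emph{first} disagreement position produces a spurious $\log V$ factor, and the essential trick is to parametrize by the \emph{last} disagreement position, after which the geometric tail $\sum l M^{-l}$ sums cleanly and yields the advertised $V^{-1/3}$ rate.
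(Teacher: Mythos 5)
Your proof is correct and is essentially the paper's own argument: the paper likewise couples $W_V=\omega_1^{\boldsymbol{A}}$ for a random address $\boldsymbol{A}$ with $\dist\boldsymbol{A}=P_V^\infty$ against a code $W_\infty$ obtained by copying labels up to the last level at which the skeleton $\widehat{J}^{\boldsymbol{A}}_1$ is level-wise injective and refreshing iid~$P$ beyond, bounds $d_{MK}(\rho_V,\rho_\infty)\le \expected d(W_V,W_\infty)$ via the third form of \eqref{mk}, and optimises $M^K\sim V^{1/3}$ --- the only substantive difference being that the paper estimates the no-collision probability by the exact birthday-problem product $\prod_{j\le k}\prod_{i=1}^{M^j-1}\left(1-i/V\right)$ rather than your union bound over coalescing pairs with the last-disagreement parametrisation, which costs you only a harmless factor $(M+1)/M$. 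Two minor repairs to your write-up: the event actually needed is only \emph{level-wise} injectivity of the skeleton (your level-$(K-1)$ union bound covers precisely that, since a within-level collision at level $j<K-1$ propagates by appending a common suffix to one at level $K-1$, whereas your stated $\mathcal{E}$ also forbids harmless cross-level coincidences that your union bound does not control), and your rounding estimate $\max_{t}\tfrac{2}{3}\bigl(2^{-t}+2^{2t-1}\bigr)<1.15$ is verified only at $M=2$ --- for general $M$ the rounding inflation grows while the base constant $\tfrac{3}{2}\bigl(M(M-1)V\bigr)^{-1/3}$ shrinks, and a crossover computation with integer $K$ gives worst case $\bigl(1+\tfrac{1}{M(M+1)}\bigr)\bigl(\tfrac{M+1}{2(M-1)}\bigr)^{1/3}$, maximised at $M=2$ with value about $1.34<1.4$, so your conclusion stands.
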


\begin{proof}
We construct random tree codes $  W_V  $  and  $ W_\infty  $ with $ \dist  W_V = \rho_V $ and $ \dist W_\infty = \rho_\infty $. In order to apply the last equality in 
\eqref{mk} we want the expected distance between $  W_V  $ and $ W_\infty $, determined by their joint distribution, to be as small as possible.

Suppose $ \boldsymbol{A} = A_0A_1A_2 \ldots $  is a random address with  $ \dist \boldsymbol{A} = P_V^\infty $.  
Let $ W_V = \omega^{\boldsymbol{A}}_1(\sigma ) $ be the corresponding random tree code, using the notation of  \eqref{wI} and \eqref{tcc}.  It follows from Definition~\ref{df38} that $ \dist  W_V = \rho_V $.

Let the random integer $  K= K(\boldsymbol{A})$  be the greatest integer such that, for $ 0 \leq j \leq K $,  if 
$ | \sigma | = | \sigma' | = j $ and $ \sigma \neq \sigma' $ then $ \widehat{J}^{\boldsymbol{A} }_1(\sigma) \neq 
\widehat{J}^{\boldsymbol{A} }_1(\sigma') $ in \eqref{tcc}. 
Thus with $ v=1  $ as in Example~\ref{rem17} the nodes of $  T $  are placed in distinct buffers up to and including level $ K$. 

Let $  W_\infty $  be any random tree code such that
\begin{align*}
& \text{if } | \sigma | \leq K \text{ then } W_\infty(\sigma )  =  W_V( \sigma),   \\
& \text{if } | \sigma | > K \text{ then } \dist W_\infty(\sigma ) = P \text{ and }  
  W_\infty(\sigma ) \text{ is independent of }   W_\infty(\sigma' ) \text{ for all } \sigma'\neq \sigma.
  \end{align*}
It follows from the definition of $ K $ that $W_\infty(\sigma )  $ are iid with distribution $  P $  \emph{for all} $ \sigma $ and so \mbox{$ \dist W_\infty =\rho_\infty $}. 

For any $  k $ and for  $ V \geq M^k  $,
{\allowdisplaybreaks
\begin{align*}
\expected d&(W_V,W_\infty)\\ 
&= \expected ( d(W_V,W_\infty) \mid K\geq k) \cdot  \prob(K\geq k) +
   \expected ( d(W_V,W_\infty) \mid K<k)\cdot   \prob(K<k)  \\
&\leq \frac{1}{M^{k+1}} +   \frac{1}{M} \prob(K<k)\quad  
\text{by \eqref{eqtrees} and since $ W_V(\emptyset) = W_\infty (\emptyset) $}         \\
&=\frac{1}{M^{k+1}} +  \frac{1}{M}
\left(
1- \prod_{i=1}^{M-1} \left( 1 - \frac{i}{V}\right) 
           \prod_{i=1}^{M^2-1} \left( 1 - \frac{i}{V}\right) \cdot
          \,  \dots\, \cdot  
            \prod_{i=1}^{M^k-1}\left( 1 - \frac{i}{V}\right) \right)\\
&\leq \frac{1}{M^{k+1}} +   \frac{1}{MV} \left( \sum_{i=1}^{M-1} i 
         + \sum_{i=1}^{M^2-1} i +\dots +  \sum_{i=1}^{M^k-1} i \right) 
                     \intertext{\qquad \qquad \qquad \qquad \qquad since $ \Pi_{i=1}^n(1-a_i) \geq 1 - \sum_{i=1}^n a_i $  for $ a_i \geq 0 $}
&\leq \frac{1}{M^{k+1}} +   \frac{1}{2MV} \left( M^2 + M^4 +\dots +M^{2k}   \right) \\
& \leq \frac{1}{M^{k+1}} + \frac{M^{2(k+1)}}{2MV(M^2-1)} 
\leq \frac{1}{M^{k+1}} + \frac{2M^{2k-1}}{3V},
   \end{align*}}
assuming $ M\geq 2 $ for the last inequality.  The estimate is trivially true if $ M=1 $ or $ V < M^k $. 

Choose  $    x $ so $ M^x = \left(\frac{3V}{4}\right)^{1/3}  $, this being the value of $ x $ which minimises  $   \frac{1}{M^{x+1}} + \frac{2M^{2x-1}}{3V} $.  Choose  $  k $ so $ k \leq x < k+1 $.  Hence from \eqref{mk}
\begin{align*} 
d_{MK}(\rho_V,\rho_\infty)
&\leq \expected d(W_V,W_\infty)  
\leq \left(\frac{3V}{4}\right)^{-1/3}  \! + \frac{2}{3MV}\left( \frac{3V}{4} \right)^{2/3}\\
&\leq V^{-1/3} \left( \left(\frac{3}{4}\right)^{-1/3} \! +
                    \frac{1}{3} \left( \frac{3}{4} \right)^{2/3}   \right)
                    \leq 1.37 V^{-1/3}.  \tag*{$  \square $}  
\end{align*}
\end{proof}  

\begin{remark}[\emph{No analogous estimate for $ d_P $ is possible in Theorem~\ref{rc}}]\label{nodp}
The support of $ \rho_V$  converges to the support of $ \rho_\infty $ in the Hausdorff metric by Theorem~\ref{prop33}.  However, $ \rho_V \nrightarrow \rho_\infty $  in the $ d_P $ metric as $ V \to \infty $.

To see this suppose $ M\geq 2 $, fix $ j,k\in \{1,\dots,M \} $ with $ j \neq k $ and let 
$ 
E = \{ \omega \in \Omega : \omega(j) = \omega (k) \},
$ 
where $ j$ and $ k  $ are interpreted as sequences of length one in $  T $.
According to the probability distribution $ \rho_\infty  $, $ \omega (j ) $ and $ \omega (k) $ are independent if $ j \neq k $.  For the probability distribution $ \rho^*_V $  there is  a positive  probability $ 1/V $  that $ J(1,j) = J(1,k) $, in which case $ \omega_1 (j) = \omega_1 (k)$ must be equal from Proposition~\ref{adct}, while if $ J(1,j) \neq J(1,k) $ then $ \omega_1(j) $ and $ \omega_1 (k) $ are independent.
 Identifying the  probability distribution  $ \rho_V $ on $ \Omega $ with the projection of $ \rho^*_V $ on $ \Omega^V $  in the first coordinate direction it follows $ \rho_\infty(E) <  \rho_V(E) $.
 
 However, $ d(\omega',E) \geq 1/M $ if $ \omega' \notin E  $  since in this case for $ \omega \in E $  either $ \omega'(j)\neq \omega(j) $  or $ \omega'(k)\neq \omega(k) $.  Hence for $ \epsilon < 1/M $, $  E^\epsilon= E $    and so 
$ \rho_\infty(E^\epsilon) = \rho_\infty(E ) <   \rho_V(E)$.  It follows that 
 $d _P(\rho_V, \rho_\infty)  \geq  1/M$ for all~$  V $ if $ M \geq 2 $.
   \end{remark} 

\begin{theorem}\label{thVapprox}
Under the assumptions of Theorem~\ref{th41},
\begin{equation} \label{eq44a}
\begin{aligned} 
d_{\mathcal{H}}(\mathcal{K}_V, \mathcal{K}_\infty), \ 
d_{\mathcal{H}}(\mathcal{M}_V, \mathcal{M}_\infty) &< \frac{2L}{1-r} V^{-{\alpha}}, \\
d _{MK}(\mathfrak{K}_V, \mathfrak{K}_\infty),\
d _{MK}(\mathfrak{M}_V, \mathfrak{M}_\infty)&<\frac{2.8\, L}{1-r} V^{-\widehat{\alpha}}, 
\end{aligned} 
\end{equation}
where 
\[ 
L =  \sup_\lambda \max_m  d(f^\lambda_m(a),a)   , \quad 
   \alpha  = \frac{\log (1/r)}{\log M} , \quad 
  \widehat{\alpha}  =  \alpha /3 \text{ if }  \alpha \leq 1,\
   \widehat{\alpha}  =  1/3 \text{ if  } \alpha \geq 1 .  
    \]
\end{theorem}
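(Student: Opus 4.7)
The plan is to combine the H\"older estimates of Proposition~\ref{prpdkw} with Theorem~\ref{prop33} ($d_{\mathcal{H}}(\Omega_V, \Omega) < 1/V$) and Theorem~\ref{rc} ($d_{MK}(\rho_V, \rho_\infty) \leq 1.4\, V^{-1/3}$). For the first pair of bounds (on $\mathcal{K}_V, \mathcal{M}_V$), note that $\Omega_V \subset \Omega$ implies $\mathcal{K}_V \subset \mathcal{K}_\infty$ and $\mathcal{M}_V \subset \mathcal{M}_\infty$, so only one direction of the Hausdorff distance requires work. Given $\omega \in \Omega$, Theorem~\ref{prop33} supplies $\omega' \in \Omega_V$ with $d(\omega, \omega') < 1/V$; then Proposition~\ref{prpdkw} gives $d_{\mathcal{H}}(K^\omega, K^{\omega'}) < \frac{2L}{1-r} V^{-\alpha}$ and $d_P(\mu^\omega, \mu^{\omega'}) < \frac{2L}{1-r} V^{-\alpha}$, and taking the supremum over $\omega$ yields the first two inequalities.

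For the second pair, let $\Pi$ denote either $\omega \mapsto K^\omega$ into $(\mathcal{C}(X), d_{\mathcal{H}})$ or $\omega \mapsto \mu^\omega$ into $(\mathcal{M}_c(X), d_{MK})$; in the latter case the bound $d_{MK} \leq d_P$ from Proposition~\ref{prop25} transfers the H\"older estimate of Proposition~\ref{prpdkw} to a $d_{MK}$-valued map and sidesteps the non-separability of $(\mathcal{M}_c(X), d_P)$ in the coupling argument below. By Definition~\ref{df311}, $\mathfrak{K}_V = \Pi_*\rho_V$, $\mathfrak{K}_\infty = \Pi_*\rho_\infty$, and analogously for $\mathfrak{M}_V, \mathfrak{M}_\infty$. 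The problem therefore reduces to bounding $d_{MK}(\Pi_*\rho_V, \Pi_*\rho_\infty)$ given the H\"older estimate $d_Y(\Pi(\omega), \Pi(\omega')) \leq \frac{2L}{1-r}\, d(\omega,\omega')^\alpha$.

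The key step is a pushforward lemma: for any $\alpha$-H\"older map $\Pi:(\Omega,d)\to (Y,d_Y)$ with constant $C$ and any probability measures $\mu,\nu$ on $\Omega$,
\[
d_{MK}(\Pi_*\mu, \Pi_*\nu) \leq C\, (d_{MK}(\mu,\nu))^{\min(\alpha,\, 1)}.
\]
For $\alpha \leq 1$, take an optimal coupling $(W,W')$ realising $d_{MK}(\mu,\nu)$ via the third form of \eqref{mk}, apply the pointwise H\"older bound, and conclude with Jensen's inequality (concavity of $t \mapsto t^\alpha$) that $\expected d(W,W')^\alpha \leq (\expected d(W,W'))^\alpha$. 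For $\alpha \geq 1$, the bound $d \leq 1$ on $\Omega$ gives $d^\alpha \leq d$, so $\Pi$ is Lipschitz with constant $C$ and the linear estimate applies directly. Inserting $C = \frac{2L}{1-r}$ and $d_{MK}(\rho_V,\rho_\infty) \leq 1.4\, V^{-1/3}$, and using $1.4^\alpha \leq 1.4$ when $\alpha \leq 1$, gives the claimed bound $\frac{2.8\, L}{1-r} V^{-\widehat{\alpha}}$ in both regimes.

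The main obstacle is essentially bookkeeping the case split $\alpha \leq 1$ versus $\alpha \geq 1$ and squeezing out the constant $2.8$ from the factor $1.4^\alpha \leq 1.4$. A secondary concern is the non-separability of $(\mathcal{M}_c(X), d_P)$ on the measure side; I handle this by working with $d_{MK}$ as target metric for the pushforward lemma, invoking Proposition~\ref{prop25} to ensure the H\"older constant is unchanged under the passage from $d_P$ to $d_{MK}$.
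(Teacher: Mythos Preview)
Your proof is correct and follows essentially the same route as the paper: the first pair of estimates comes directly from Theorem~\ref{prop33} combined with the H\"older bound of Proposition~\ref{prpdkw}, and the second pair is obtained by pushing forward a coupling of $\rho_V$ and $\rho_\infty$ through the H\"older map $\omega\mapsto K^\omega$ (resp.\ $\omega\mapsto\mu^\omega$), using Jensen's inequality for $\alpha\le 1$ and the trivial bound $d^\alpha\le d$ for $\alpha\ge 1$. The only cosmetic difference is that the paper uses the explicit coupling $(W_V,W_\infty)$ constructed in the proof of Theorem~\ref{rc} rather than invoking an abstract optimal coupling, and names the concavity step ``H\"older's inequality'' rather than Jensen's.
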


\begin{proof} The first two estimates follow from Theorem~\ref{prop33}  and Proposition~\ref{prpdkw}.

\smallskip For the third, let $ W_V $ and $ W_\infty $ be the random codes from the proof of Theorem~\ref{rc}.  In particular, 
\begin{equation} \label{eq521}
 \rho_V = \dist W_V, \quad \rho_\infty = \dist W_\infty, \quad \expected d( W_V, W_\infty) \leq 1.4 V^{-1/3}.
\end{equation}

Let $ \widehat{\Pi} $ be the projection map $ \Omega \mapsto K^\omega $ given by  \eqref{dKo} and \eqref{kpi}. Then
$  \mathfrak{K}_V = \dist \widehat{\Pi} \circ W_V$,  $ \mathfrak{K}_\infty = \dist \widehat{\Pi} \circ W_\infty   $, 
and from the last condition in~\eqref{mk}
\begin{equation} \label{eq522}
d _{MK}(\mathfrak{K}_V, \mathfrak{K}_\infty) \leq \expected 
                d_{\mathcal{H}}(\widehat{\Pi} \circ W_V, \widehat{\Pi} \circ W_\infty).
                \end{equation} 
                
If $ \alpha \leq 1 $ then from   \eqref{dkw} on taking expectations of both sides, using H\"{o}lder's inequality and applying~\eqref{eq521},
\[  
 \expected  d_{\mathcal{H}}(\widehat{\Pi} \circ W_V, \widehat{\Pi} \circ W_\infty)
      \leq \frac{2L}{1-r} \expected d^\alpha (W_V, W_\infty) \leq \frac{2.8\, L}{1-r} V^{-\alpha/3}.
      \]           
                
If $ \alpha \geq 1 $ then from the last two lines in the proof of Proposition~\eqref{prpdkw},
\[             d_{ \mathcal{H} }(K^\omega,K^{\omega'}) \leq \frac{2L}{1-r}\,  d^\alpha (\omega,\omega') ,
\]
and so, using this and  arguing as  before,
\[  
 \expected  d_{\mathcal{H}}(\widehat{\Pi} \circ W_V, \widehat{\Pi} \circ W_\infty)
    \leq \frac{2.8 L}{1-r} V^{-1/3}.
      \]                  

This gives the third estimate.       The fourth estimate is proved in an analogous manner.
\end{proof}         
                             
Sharper estimates can be obtained arguing directly as in the proof of Theorem~\ref{rc}.  In particular, the exponent $ \widehat{\alpha} $ can be replaced by $ \dfrac{\log(1/r)}{\log(M^2/r)}$.


\section{Example  of $ 2 $-Variable Fractals}\label{2vf}

Consider the  family $ \boldsymbol{F}=\{\mathbb{R}^2, U,D, \frac{1}{2},\frac{1}{2} \} $ consisting of two IFSs  $ U=(f_1,f_2) $ (Up with a reflection) and $ D=(g_1,g_2) $ (Down) acting on $ \mathbb{R}^2$, where
\begin{align*}
f_1(x,y) &= \Big( \frac{x}{2} +\frac{3y}{8}- \frac{1}{16},
        \phantom{+}\frac{x}{2}-\frac{3y}{8}+\frac{9}{16}\Big),
&f_2(x,y)  & =        \Big( \frac{x}{2} -\frac{3y}{8}+ \frac{9}{16},
         -\frac{x}{2}-\frac{3y}{8}+\frac{17}{16}\Big),\\
g_1(x,y) &= \Big( \frac{x}{2} +\frac{3y}{8}- \frac{1}{16},
         -\frac{x}{2}+\frac{3y}{8}+\frac{7}{16}\Big),
&g_2(x,y)  & =        \Big( \frac{x}{2} -\frac{3y}{8}+ \frac{9}{16},
         \phantom{+}\frac{x}{2}+\frac{3y}{8}-\frac{1}{16}\Big).
    \end{align*}
The corresponding fractal attractors of $ U $ and $ F $ are shown at the beginning of  Figure  4.   The probability of choice of $ U $ and $ D $ is $ \frac{1}{2}  $ in each case. 

\begin{figure}[htbp]  
   \centering
   \includegraphics[]{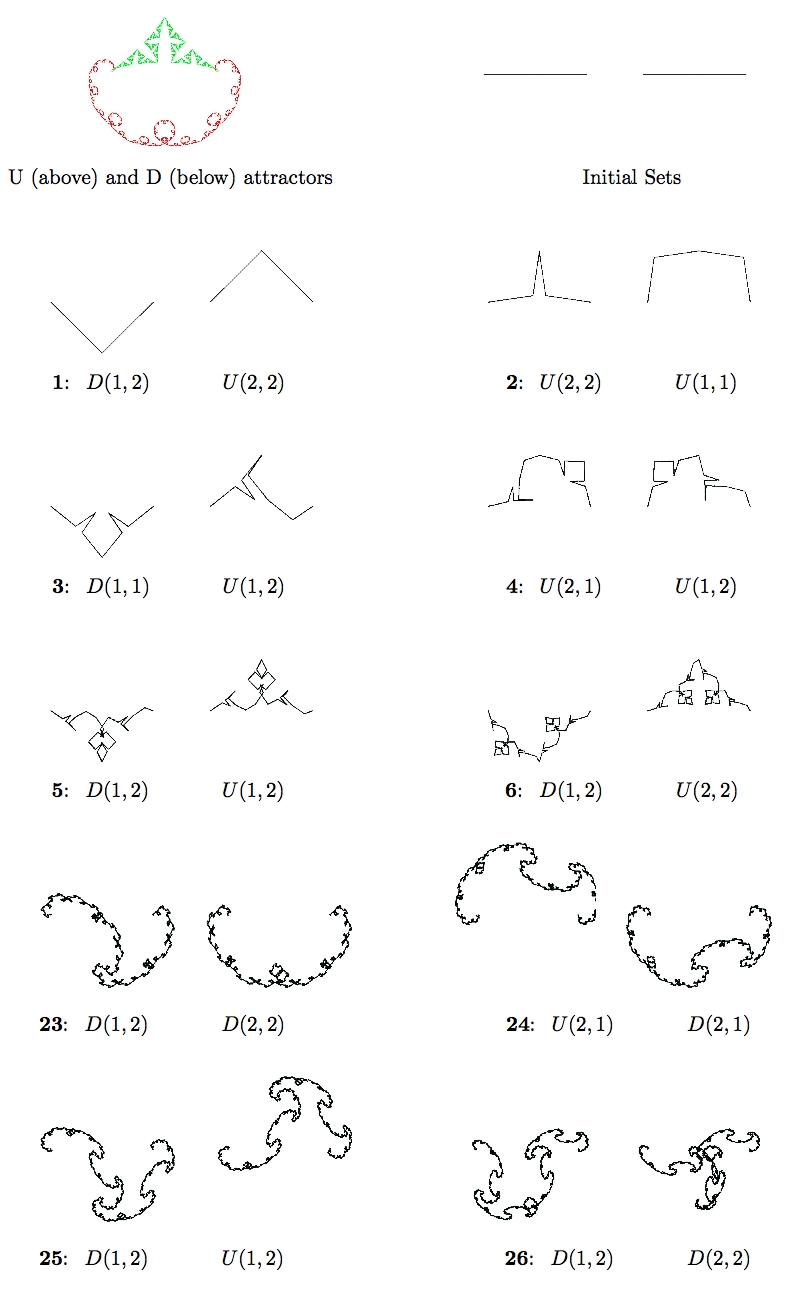} 
   \caption{Sampling 2-variable fractals.}
   \label{fig:example2}
\end{figure}

The  2-variable superIFS acting on pairs of compact sets is 
$ 
  \mathfrak{F}_2^{\mathcal{C} } = 
  (\mathcal{C}(\mathbb{R}^2)^2, \mathcal{F}^a, a\in \mathcal{A}_2,P_2).
$ 
There are  64 maps $ a \in \mathcal{A}_2 $, each a $ 2\times 3 $ matrix.  The probability distribution $ P_2 $ assigns probability $ \frac{1}{64} $  to each $ a\in \mathcal{A}_2 $.     

In iteration 4 in Figure 4 the matrix is
$ 
 a 
    = \begin{bmatrix} 
 			U & 2 & 1 \\
   			U & 1 & 2
	\end{bmatrix}$. 
Applying $ \mathcal{F}^a $ to the pair of sets $ (E_1,E_2) $ from iteration 3 gives
\[ \mathcal{F}^a(E_1,E_2)=
\big(U(E_2,E_1), U(E_1,E_2)\big) = \big(f_1(E_2)\cup f_2(E_1) ,   f_1(E_1)\cup f_2(E_2)  \big).     
\]

The process in Figure 4 begins with a pair of line segments. The first 6 iterations  and iterations 23--26 are shown.  
After about 12 iterations  the sets are independent of the initial sets up to screen resolution.  After this the pairs of sets can be considered as examples of 2-variable 2-tuples of fractal sets corresponding to $ \boldsymbol{F} $.

The generation process gives an MCMC algorithm or ``chaos game''  and acts on the infinite state space $   (\mathcal{C}(\mathbb{R}^2)^2, d_{\mathcal{H} })  $ of pairs of compact sets with the $ d_{\mathcal{H} } $ metric.
The empirical distribution along a.e.\  trajectory from any starting pair of sets   converges weakly to the 2-variable superfractal probability distribution on 2-variable 2-tuples of fractal sets corresponding to $ \boldsymbol{F}  $.  The empirical distribution of first (and second) components  converges weakly to the corresponding natural  probability distribution on 2-variable  fractal sets corresponding to $ \boldsymbol{F}  $.  

\section{Concluding Comments}\label{secext}

\begin{remark}[\emph{Extensions}]\label{hcf}
The restriction that  each IFS $ F^\lambda $ in~\eqref{dfF1} has the same number $ M $ of functions is for notational simplicity only.  

In Definition~\ref{df38}  the independence conditions on the $I $ and $ J $  may be relaxed.  In some modelling situations it would be natural to have a degree of local dependance between $ (I(v),J(v)) $  and $ (I(w),J(w)) $ for $ v $ ``near'' $w $.

The probability distribution $ \rho_V$    is in some sense
the most natural probability distribution on the set of $ V$-variable code
trees since it is inherited from the probability distribution $ P_V^\infty$ 
with the simplest possible probabilistic structure.
We may construct more general distributions on the set of $ V$-variable code
trees by letting $ P_V^\infty$  be non-Bernoulli,  e.g.\ stationary.

Instead of beginning with a family of IFSs in~\eqref{dfF2} one could begin with a family of graph directed IFSs and obtain in this manner the corresponding class of $ V $-variable graph directed fractals. 
\end{remark}

\begin{remark}[\emph{Dimensions}]\label{remar}
 Suppose $\boldsymbol{F} =  \{\mathbb{R}^n, F^\lambda; \lambda \in \Lambda, P \} $   is a family of IFSs satisfying the strong uniform open set condition and whose maps are similitudes.  In a forthcoming paper we compute the a.s.\ dimension of the associated family of $ V $-variable random fractals.
 The idea is to associate to each $ a \in \mathcal{A}_V $ a certain $ V\times V$ matrix and then use the Furstenberg Kesten  theory for products of random matrices to compute a type of pressure function.   
 \end{remark}

\begin{remark}[\emph{Motivation for the construction of $ V $-variable fractals}]\label{mfc}
The original motivation  was to find a chaos game type algorithm for generating  collections of fractal sets whose empirical distributions approximated the probability distribution of standard random fractals.

More precisely, suppose $ \boldsymbol{F} = \{(X,d), F^ \lambda , \lambda \in \Lambda, P \}   $ is a family of IFSs as in~\eqref{dfF2}. Let $ V $ be a large positive integer and $ \mathcal{S} $ be a  collection of  $ V $ compact subsets of $ X$, such that the empirical distribution of $ \mathcal{S} $ approximates the  distribution $ \mathfrak{K}_\infty $  of the standard random fractal  associated to $ \boldsymbol{F} $ by   Definition~\ref{def211}.  Suppose $ \mathcal{S}^* $ is a second collection of $ V $ compact subsets of $ X $ obtained from $ \boldsymbol{F} $ and $ \mathcal{S} $ as follows.  For each $ v \in \{ 1,\dots , V \} $ and  independently  of other $ w \in  \{ 1,\dots , V \} $, select $ E_1, \dots E_M $ from $ \mathcal{S} $  according to the uniform distribution independently with replacement,  and independently  of this select $F^\lambda $ from $ \boldsymbol{F} $ according to  $ P $. Let the $ v $th set in $ \mathcal{S}^* $ be $ F^\lambda (E_1, \dots E_M)  = \bigcup_{1\leq m \leq M} f^\lambda_m(E_m) $.   Then one expects the empirical distribution of $ \mathcal{S}^* $ to also approximate   $ \mathfrak{K}_\infty $.  

The random operator constructed in this manner for passing from $ \mathcal{S} $ to $ \mathcal{S}^* $ is essentially the random operator $ \mathcal{F}^a  $ in Definition~\ref{df42} with $ a \in \mathcal{A}_V $ chosen according to $ P_V $.
\end{remark}

\begin{remark}[\emph{A hierarchy of fractals}]\label{hft}
See Figure~\ref{Table}. 

If $ M=1 $ in \eqref{dfF1} then each $ F^\lambda $ is a trivial IFS $ (f^\lambda) $ containing just one map, and the family $ \boldsymbol{F} $ in 
\eqref{dfF2} can be interpreted as a standard IFS.  If moreover $ V=1 $ then the corresponding superIFS in Definition~\ref{df42} can be interpreted as a standard IFS operating on $ (X,d) $ with set and measure attractors $ K $ and $ \mu $, essentially by identifying singleton subsets of $ X $ with  
elements in $ X $.  For $ M=1 $ and $ V >1 $ the superIFS can be identified with an IFS operating on $ X^V $ with   set and measure attractors $ K_V^* $ and $ \mu_V^* $.  Conversely, any standard IFS can be extended to  a superIFS in this manner.
The projection of $ K_V^* $ in any coordinate direction is $ K $, but 
$ K_V^* \neq K^V $.  
The attractors $K_V^* $ and   $ \mu_V^* $ are called correlated fractals.  The measure $ \mu^* $ provides information on a certain   ``correlation''  between subsets of $ K $.  This  provides a new tool for studying  the structure of standard IFS fractals as we show in a forthcoming paper.
 
The case $ V = 1 $ corresponds to homogeneous random fractals and  has been studied in \cites{Hambly92, Kif, Stenflo01}.   
 The case $ V\to \infty $ corresponds to standard random fractals as defined in Definition~\ref{def211},  see also Section~\ref {secvti}.  See also \cite{Asa} for some graphical examples.
 
For a given class  $ \boldsymbol{F} $ of IFSs and   positive integer $ V>1 $, one obtains a new class of fractals each with the prescribed degree $ V $ of  self similarity at 
every scale.  The associated superIFS provides a rapid way of generating a  sample from this class of $ V $-variable fractals whose empirical distribution approximates the natural probability distribution on the class.

Large $ V $ provides a method for generating a class of correctly distributed approximations  to standard random fractals.   Small $ V $ provides a class of fractals with useful modelling properties.   
  \end{remark}

\begin{bibdiv}
\begin{biblist}

\bib{Asa}{report}{
   author={Asai, Takahiro},
   title={Fractal image generation with iterated function set},
  organization={Ricoh},
    pages={1\ndash 11},
   date={1998-11},
   number={24},
}

\bib{Bar}{book}{
   author={Barnsley, M. F.},
   title={Superfractals},
   publisher={Cambridge University Press},
   place={Cambridge},
   date={2006},
   pages={x+453},
   isbn={978-0-521-84493-2},
   isbn={0-521-84493-2},
}

\bib{BD85}{article}{
    author={Barnsley, M. F.},
    author={Demko, S.},
     title={Iterated function systems and the global construction of
            fractals},
   journal={Proc. Roy. Soc. London Ser. A},
    volume={399},
      date={1985},
    pages={243\ndash 275},
      issn={0962-8444},
}

\bib{BE}{article}{
    author={Barnsley, Michael F.},
    author={Elton, John H.},
     title={A new class of Markov processes for image encoding},
   journal={Adv. in Appl. Probab.},
    volume={20},
      date={1988},
     pages={14\ndash 32},
      issn={0001-8678},
}

\bib{BEH}{article}{
    author={Barnsley, Michael F.},
    author={Elton, John H.},
    author={Hardin, Douglas P.},
     title={Recurrent iterated function systems},
      note={Fractal approximation},
   journal={Constr. Approx.},
    volume={5},
      date={1989},
     pages={3\ndash 31},
}

\bib{BHS05}{article}{
    author={Barnsley, Michael},
    author={Hutchinson, John},
    author={Stenflo, {\"O}rjan},
     title={A fractal valued random iteration algorithm and fractal
            hierarchy},
   journal={Fractals},
    volume={13},
      date={2005},
     pages={111\ndash 146},
}

\bib{Bil}{book}{
    author={Billingsley, Patrick},
     title={Convergence of probability measures},
    series={Wiley Series in Probability and Statistics: Probability and
            Statistics},
 publisher={John Wiley \& Sons Inc.},
     place={New York},
      date={1999},
     pages={x+277},
      isbn={0-471-19745-9},
}

\bib{Breiman}{article}{
    author={Breiman, Leo},
     title={The strong law of large numbers for a class of Markov chains},
   journal={Ann. Math. Statist.},
    volume={31},
      date={1960},
     pages={801\ndash 803},
}

\bib{DF}{article}{
    author={Diaconis, Persi},
    author={Freedman, David},
     title={Iterated random functions},
   journal={SIAM Rev.},
    volume={41},
      date={1999},
     pages={45\ndash 76},
      issn={1095-7200},
}

\bib{DS}{article}{
    author={Diaconis, Persi},
    author={Shahshahani, Mehrdad},
     title={Products of random matrices and computer image generation},
 booktitle={Random matrices and their applications (Brunswick, Maine, 1984)},
    series={Contemp. Math.},
    volume={50},
     pages={173\ndash 182},
 publisher={Amer. Math. Soc.},
     place={Providence, RI},
      date={1986},
}

\bib{FD}{article}{
    author={Doeblin, Wolfgang},
        author={Fortet, Robert},
     title={Sur des cha\^{\i}nes \`a liaisons compl\`etes},
  language={French},
   journal={Bull. Soc. Math. France},
    volume={65},
      date={1937},
     pages={132\ndash 148},
}

\bib{Dud}{book}{
    author={Dudley, R. M.},
     title={Real analysis and probability},
    series={Cambridge Studies in Advanced Mathematics},
    volume={74},
      note={Revised reprint of the 1989 original},
 publisher={Cambridge University Press},
     place={Cambridge},
      date={2002},
     pages={x+555},
      isbn={0-521-00754-2},
}

\bib{Elton87}{article}{
    author={Elton, John H.},
     title={An ergodic theorem for iterated maps},
   journal={Ergodic Theory Dynam. Systems},
    volume={7},
      date={1987},
     pages={481\ndash 488},
}

\bib{Elton90}{article}{
    author={Elton, John H.},
     title={A multiplicative ergodic theorem for Lipschitz maps},
   journal={Stochastic Process. Appl.},
    volume={34},
      date={1990},
     pages={39\ndash 47},
}

\bib{Falconer86}{article}{
    author={Falconer, Kenneth},
     title={Random fractals},
   journal={Math. Proc. Cambridge Philos. Soc.},
    volume={100},
      date={1986},
     pages={559\ndash 582},
}

\bib{Fed}{book}{
   author={Federer, Herbert},
   title={Geometric measure theory},
   series={Die Grundlehren der mathematischen Wissenschaften, Band 153},
   publisher={Springer-Verlag New York Inc., New York},
   date={1969},
   pages={xiv+676},
}

\bib{Graf87}{article}{
    author={Graf, Siegfried},
     title={Statistically self-similar fractals},
   journal={Probab. Theory Related Fields},
    volume={74},
      date={1987},
    pages={357\ndash 392},
}

\bib{Graf91}{article}{
    author={Graf, Siegfried},
     title={Random fractals},
      note={School on Measure Theory and Real Analysis (Grado, 1991)},
   journal={Rend. Istit. Mat. Univ. Trieste},
    volume={23},
      date={1991},
     pages={81\ndash 144 (1993)},
}

\bib{Hambly92}{article}{
    author={Hambly, B. M.},
     title={Brownian motion on a homogeneous random fractal},
   journal={Probab. Theory Related Fields},
    volume={94},
      date={1992},
     pages={1\ndash 38},
}

\bib{Hutchinson81}{article}{
    author={Hutchinson, John E.},
     title={Fractals and self-similarity},
   journal={Indiana Univ. Math. J.},
    volume={30},
      date={1981},
     pages={713\ndash 747},
      issn={0022-2518},
}

\bib{HR98}{article}{
    author={Hutchinson, John E.},
    author={R{\"u}schendorf, Ludger},
     title={Random fractal measures via the contraction method},
   journal={Indiana Univ. Math. J.},
    volume={47},
      date={1998},
     pages={471\ndash 487},
      issn={0022-2518},
}

\bib{HR00}{article}{
    author={Hutchinson, John E.},
    author={R{\"u}schendorf, Ludger},
     title={Random fractals and probability metrics},
   journal={Adv. in Appl. Probab.},
    volume={32},
      date={2000},
     pages={925\ndash 947},
      issn={0001-8678},
}
\bib{HR98_1}{article}{
    author={Hutchinson, John E.},
    author={R{\"u}schendorf, Ludger},
     title={Selfsimilar fractals and selfsimilar random fractals},
 booktitle={Fractal geometry and stochastics, II (Greifswald/Koserow, 1998)},
    series={Progr. Probab.},
    volume={46},
     pages={109\ndash 123},
 publisher={Birkh\"auser},
     place={Basel},
      date={2000},
}

\bib{Isaac}{article}{
    author={Isaac, Richard},
     title={Markov processes and unique stationary probability measures},
   journal={Pacific J. Math.},
    volume={12},
      date={1962},
     pages={273\ndash 286},
}

\bib{Kif}{article}{
   author={Kifer, Yuri},
   title={Fractals via random iterated function systems and random geometric
   constructions},
   conference={
      title={Fractal geometry and stochastics},
      address={Finsterbergen},
      date={1994},
   },
   book={
      series={Progr. Probab.},
      volume={37},
      publisher={Birkh\"auser},
      place={Basel},
   },
   date={1995},
   pages={145--164},
}

\bib{Kra}{article}{
   author={Kravchenko, A. S.},
   title={Completeness of the space of separable measures in the
   Kantorovich-Rubinshte\u\i n metric},
   journal={Sibirsk. Mat. Zh.},
   volume={47},
   date={2006},
   number={1},
   pages={85--96},
   issn={0037-4474},
   translation={
      journal={Siberian Math. J.},
      volume={47},
      date={2006},
      number={1},
      pages={68--76},
      issn={0037-4466},
   },
}

\bib{Man1}{book}{
   author={Mandelbrot, Benoit B.},
   title={Fractals: form, chance, and dimension},
   edition={Revised edition},
   note={Translated from the French},
   publisher={W. H. Freeman and Co.},
   place={San Francisco, Calif.},
   date={1977},
   pages={xvi+365},
}

\bib{Man2}{book}{
   author={Mandelbrot, Benoit B.},
   title={The fractal geometry of nature},
   publisher={W. H. Freeman and Co.},
   place={San Francisco, Calif.},
   date={1982},
   pages={v+460},
   isbn={0-7167-1186-9},
}

\bib{MU}{book}{
   author={Mauldin, R. Daniel},
   author={Urba{\'n}ski, Mariusz},
   title={Graph directed Markov systems},
   series={Cambridge Tracts in Mathematics},
   volume={148},
   publisher={Cambridge University Press},
   place={Cambridge},
   date={2003},
   pages={xii+281},
   isbn={0-521-82538-5},
}
	
\bib{MW86}{article}{
    author={Mauldin, R. Daniel},
    author={Williams, S. C.},
     title={Random recursive constructions: asymptotic geometric and
            topological properties},
   journal={Trans. Amer. Math. Soc.},
    volume={295},
      date={1986},
     pages={325\ndash 346},
      issn={0002-9947},
}

\bib{MT}{book}{
    author={Meyn, S. P.},
    author={Tweedie, R. L.},
     title={Markov chains and stochastic stability},
 publisher={Springer-Verlag London Ltd.},
     place={London},
      date={1993},
     pages={xvi+ 548},
}

\bib{Ols}{book}{
   author={Olsen, Lars},
   title={Random geometrically graph directed self-similar multifractals},
   series={Pitman Research Notes in Mathematics Series},
   volume={307},
   publisher={Longman Scientific \& Technical},
   place={Harlow},
   date={1994},
   pages={xvi+245},
   isbn={0-582-25381-0},
}

\bib{Rachev}{book}{
    author={Rachev, Svetlozar T.},
     title={Probability metrics and the stability of stochastic models},
    series={Wiley Series in Probability and Mathematical Statistics: Applied
            Probability and Statistics},
 publisher={John Wiley \& Sons Ltd.},
     place={Chichester},
      date={1991},
     pages={xiv+494},
      isbn={0-471-92877-1},
}

\bib{Stenflo98}{thesis}{
    author={Stenflo, {\"O}rjan},
     title={Ergodic theorems for iterated function systems controlled by stochastic sequences},
   date={1998},
   institution={Ume\r{a} University},  
   place={Sweden},
   number={14},
       status={Doctoral thesis, no.\ 14}
}

\bib{Stenflo03-1}{article}{
    author={Stenflo, {\"O}rjan},
     title={Ergodic theorems for Markov chains represented by iterated
            function systems},
   journal={Bull. Polish Acad. Sci. Math.},
    volume={49},
      date={2001},
     pages={27\ndash 43},
      issn={0239-7269},
}

\bib{Stenflo01}{article}{
    author={Stenflo, {\"O}rjan},
     title={Markov chains in random environments and random iterated
            function systems},
   journal={Trans. Amer. Math. Soc.},
    volume={353},
      date={2001},
     pages={3547\ndash 3562},
      issn={0002-9947},
}

\bib{Vil}{book}{
   author={Villani, C{\'e}dric},
   title={Topics in optimal transportation},
   series={Graduate Studies in Mathematics},
   volume={58},
   publisher={American Mathematical Society},
   place={Providence, RI},
   date={2003},
   pages={xvi+370},
   isbn={0-8218-3312-X},
}

\bib{WW}{article}{
   author={Wu, Wei Biao},
   author={Woodroofe, Michael},
   title={A central limit theorem for iterated random functions},
   journal={J. Appl. Probab.},
   volume={37},
   date={2000},
   number={3},
   pages={748--755},
   issn={0021-9002},
}
\end{biblist}
\end{bibdiv}

\begin{theindex}

\index \textbf{Sets and associated metrics}
\item $(X,d)  $:   complete separable metric space, page \pageref{xd}
\item $A^\epsilon $: closed $ \epsilon $ neighbourhood of $ A $, Defn \ref{mx}
\item $ \mathcal{C} $, $ \mathcal{B} \mathcal{C}$:  nonempty compact subsets, nonempty bounded closed subsets, Defn \ref{cx} 
\item $  d_{\mathcal{H}}  $: Hausdorff metric, Defn \ref{cx}

\indexspace
\item \textbf{Measures and associated metrics}
\item $ \mathcal{M}$: unit mass Borel regular (i.e.\ probability) measures, Defn~\ref{mx}
\item $ \mathcal{M}_1$: finite first moment measures in $ \mathcal{M}$, Defn~\ref{mx2}
\item $ \mathcal{M}_b$: bounded support measures in $ \mathcal{M}$, Defn~\ref{mp}
\item $ \mathcal{M}_c$: compact support measures in $ \mathcal{M}$, Defn~\ref{mp}
\item  $ \rho  $: Prokhorov metric, Defn~\ref{mx} 
\item   $ d_{MK} $:  Monge-Kantorovitch  metric, Defn ~\ref{mk} 
\item   $ d_P $:  strong Prokhorov metric, Defn \ref{mp}

\indexspace
\item \textbf{Iterated Function Systems [IFS]}
\item $F = (X, f_\theta,  \theta \in \Theta, W) $: generic IFS, Defn~\ref{def22}
\item $ F = (X,f_1,\dots, f_M, w_1,\dots, w_M) $: generic IFS with a finite number of functions, Defn~\ref{def22}
\item $ F(\cdot)$: $  F $  acting on a set, measure or continuous function (transfer operator), Defn~\ref{def22} and eqn~\eqref{fphi}, also eqns~\eqref{fnu},  \eqref{nuphi} 
\item $ Z^x_n $:  $ n $th (random) iterate  in the  Markov chain given by $ F $ 
and starting from $ x $, eqn~\eqref{28}
\item $ Z^\nu_n $: the corresponding Markov chain with initial distribution $ \nu $, eqn~\eqref{fnu} 
\item $ \widehat{Z}^x_n $: backward  (``convergent'') process, eqn~\eqref{28new}

\indexspace
\item\textbf{Tree Codes and (Standard) Random Fractals}
\item $ \{X, F^\lambda, \lambda  \in\Lambda, P\}   $: a family of IFSs $F^\lambda= (X,f^\lambda_1,\dots,f^\lambda_M,w^\lambda_1,\dots,w^\lambda_M ) $ with   probability distribution $  P $, Defn~\ref{dfF2} 
\item $  T $: canonical $ M $-branching tree, Defn~\ref{dftrees}
\item $|\cdot| $: $ |\sigma| $ is the length of $ \sigma \in T $, Defn~\ref{dftrees} 
\item $ (\Omega,d) $: metric space of tree codes, Defn~\ref{dftrees} 
\item $ \omega \rfloor \tau $:   subtree code of $\omega $ with base node $ \tau $,  Defn~\ref{dftrees}  
\item $ \omega \lfloor k $:   subtree finite code of $ \omega $  of height $  k $, Defn~\ref{dftrees}  
\item $ \rho_\infty $: prob distn on $ \Omega $ induced from $ P $, Defn~\ref{dftrees}  
 
\item $ K^\omega $: (realisation of random) fractal set, Defn~\ref{def28}  
\item $ \mu^\omega $: (realisation of random) fractal, Defn~\ref{def28}  
\item $ K^\omega_\sigma $,  $ \mu^\omega_\sigma $:   subfractals of $ K^\omega $,  $ \mu^\omega $, Defn~\ref{def28} 
\item $ \mathcal{K}_\infty  $, $ \mathcal{M}_\infty  $: collection  of random fractals sets or  measures corresponding to a family of IFSs, eqn~\eqref{eq29}  
\item $ \mathfrak{K}_\infty $,  $ \mathfrak{M}_\infty $: probability distribution on  $ \mathcal{K}_\infty, $, $ \mathcal{M}_\infty  $ induced from  $ \rho_\infty $, Defn~\ref{def211}
\item $ \mathbb{K} $, $ \mathbb{M}  $: random set or measure with  distribution $ \mathfrak{K}_\infty $ or  $ \mathfrak{M}_\infty $, Defn~\ref{def211}
\item $ F(\cdot,\dots,\cdot) $:  IFS $ F $ acting on an M tuple of sets or measures, eqn~\eqref{eq21} 

\indexspace
\item\textbf{$ V $-Variable Tree Codes}
\item $ \Omega_V  $: set of $ V $-variable tree codes, Defn~\ref{Vvar}
\item $ \mathcal{K}_V $: collection of $ V $-variable sets, Defn~\ref{Vvar}   
\item  $ \mathcal{M}_V $: collection of $ V $-variable   measures, Defn~\ref{Vvar}   
\item $ (\Omega^V,d) $: metric space of $ V $-tuples of tree codes, Defn~\ref{omv} 
\item $ \Omega_V^* $: set of $ V $-variable $ V $-tuples of tree codes, Defn~\ref{df34}; attractor of $\boldsymbol{\Phi}_V$, Thm~\ref{char2}
\item $ * $: concatenation operator, Notn~\ref{concat}
\item $\boldsymbol{\Phi}_V$: the IFS $ (\Omega^V, \Phi^a, a \in \mathcal{A}_V )$,  Defn~\ref{dfIFStr}, Defn~\ref{df38}
\item $ \Phi^a$: maps in $\boldsymbol{\Phi}_V $,  Defn~\ref{df38}
\item $ a =(I,J)   \in \mathcal{A}_V $: indices and index set for the maps $ \Phi^a$, Defn~\ref{df38}
\item $I$:  the map $ I:\{1,\dots,V\} \to \Lambda$, Defn~\ref{df38}
\item $J$:  map $ J:\{1,\dots,V\} \times \{1,\dots,M\}\to \{1,\dots,V\}$, $ I:\{1,\dots,V\} \to \Lambda$, Defn~\ref{df38}
\item $ \boldsymbol{a}  $: address  $ a_0a_1\dots a_k \dots \in \mathcal{A}_V^\infty $, Defn~\ref{df38} 
\item $\boldsymbol{\omega}^{\boldsymbol{a} }  =
(\omega_1^{\boldsymbol{a} } ,\dots,\omega_V^{\boldsymbol{a} } )$; $  V$-variable $ V $-tuple of tree codes corresponding to address $ \boldsymbol{a} $, Defn~\ref{dfseqcode} 
\item $ \widehat{J}^{\boldsymbol{a}}_v$:  skeleton map from $ T\to \{1,\dots,V\} $, Defn~\ref{dfskel}
\item $ P_V $, $ P_V^\infty $: prob distributions on $ \mathcal{A}_V $, $ \mathcal{A}^\infty_V $, Defn~\ref{df38}
\item $ \rho_V $, $ \rho_V^* $; prob distributions on $ \Omega_V  $, $ \Omega_V^* $, Defn~\ref{df38}

\indexspace
\item\textbf{$ V $-Variable   Sets and Measures}
\item $ d_{\mathcal{H}}(\cdot,\ldots,\cdot)$: metric  on $ \mathcal{C} (X)^V$,  Defn~\ref{dfextmet}, 
\item $ d_P(\cdot,\ldots,\cdot)$: metric  on  $ \mathcal{M}_c(X)^V $,  Defn~\ref{dfextmet} 
\item $  d_{MK}(\cdot,\ldots,\cdot)$: metric  on  $ \mathcal{M}_1(X)^V $,    Defn~\ref{dfmk}
\item  $ \mathfrak{F}_V^{\mathcal{C}}$: IFS 
$ \left(\mathcal{C} (X)^V,\mathcal{F}^a, a \in \mathcal{A}_V , P_V\right)$, Defn~\ref{concat}
 \item $ \mathfrak{F}_V^{\mathcal{M}_c} $:  IFS $  \left(\mathcal{M}_c (X)^V,\mathcal{F}^a, a \in \mathcal{A}_V , P_V\right)$, Defn~\ref{concat}
 \item $ \mathfrak{F}_V^{\mathcal{M}_1} $:  IFS $  \left(\mathcal{M}_1 (X)^V,\mathcal{F}^a, a \in \mathcal{A}_V , P_V\right)$, Defn~\ref{concat}
\item $ \mathcal{F}^a (\cdot,\dots,\cdot)$: map on $ V $-tuples of sets or measures,  Defn~\ref{concat}  
\item $ \mathcal{K}_V$: collection   of  $V $-variable   fractals sets, Defn~\ref{df34}; projection of $ \mathcal{K}_V^*$ in any coord direction, Def~\ref{df311} 
\item $ \mathcal{K}_V^*$: collection   of  $V $-variable $ V$-tuples of  fractals sets, Defn~\ref{df34}; set attractor of $ \mathfrak{F}_V^{\mathcal{C}}$, Thm~\ref{th41}
\item  $ \mathfrak{K}_V $, $  \mathfrak{K}^*_V $: projected measures on  $ \mathcal{K}_V$ \& $ \mathcal{K}_V^*$ from measure on tree codes, Defn~\ref{df311}; measure attractor of $ \mathfrak{F}_V^{\mathcal{M}_c} $ and its projection, Thm~\ref{th41}
\item $ \mathcal{M}_V$: collection   of  $V $-variable   fractals measures, Defn~\ref{df34}; projection of $ \mathcal{M}_V^*$ in any coord direction, Def~\ref{df311} 
\item $ \mathcal{M}_V^*$: collection   of  $V $-variable $ V$-tuples of  fractals measures, Defn~\ref{df34}; set attractor of $ \mathfrak{F}_V^{\mathcal{M}_c}$, Thm~\ref{th41}
\item  $ \mathfrak{M}_V $: projected measure  on  $ \mathcal{M}_V$ from measure on tree codes, projection of $  \mathfrak{M}^*_V $ in any coord direction, Defn~\ref{df311}
\item $  \mathfrak{M}^*_V $: projected measures on    $ \mathcal{M}_V^*$ from measure on tree codes, Defn~\ref{df311}; measure attractor of $ \mathfrak{F}_V^{\mathcal{M}_c} $ or $ \mathfrak{F}_V^{\mathcal{M}_1} $, Thm~\ref{th41} \& Thm~\ref{th42} 
\end{theindex}

\end{document}